\documentclass[a4paper,10pt,psamsfonts]{amsart}

\usepackage[utf8]{inputenc} 
\usepackage[T1]{fontenc}    

\usepackage{amsmath}
\usepackage{amstext}
\usepackage{amsfonts}
\usepackage{amsthm}
\usepackage{amssymb}

\usepackage{graphicx}
\usepackage{psfrag}
\usepackage{subfigure}
\usepackage{float}


\usepackage{hyperref}
\hypersetup{colorlinks}
\hypersetup{linktocpage}

\usepackage{color}




\newcommand{\R}{{\mathbb{R}}}
\newcommand{\E}{{\mathbb{E}}}
\newcommand{\N}{{\mathbb{N}}}

\newcommand{\F}{{\mathcal{F}}} 
\renewcommand{\P}{{\mathbf{P}}} 

\newcommand{\B}{{\mathcal{B}}}
\newcommand{\LB}{{\mathcal{L}}}
\newcommand{\diff}[1]{\,\mathrm{d}#1}

\newcommand{\id}{\mathrm{id}}
\newcommand{\ee}{\mathrm{e}}

\theoremstyle{plain}

\newtheorem{definition}{Definition}[section]
\newtheorem{theorem}[definition]{Theorem}
\newtheorem{lemma}[definition]{Lemma}
\newtheorem{corollary}[definition]{Corollary}
\newtheorem{prop}[definition]{Proposition}
\newtheorem{assumption}[definition]{Assumption}

\theoremstyle{definition}

\begin{document}

\title[Stochastic C-Stab. and B-cons. of Milstein-type
schemes] 
{Stochastic C-stability and B-consistency of explicit and implicit Milstein-type
schemes}

\author[W.-J.~Beyn]{Wolf-J\"urgen Beyn}
\address{Wolf-J\"urgen Beyn\\
Fakult\"at f\"ur Mathematik\\
Universit\"at Bielefeld\\
Postfach 100 131\\
DE-33501 Bielefeld\\
Germany}
\email{beyn@math.uni-bielefeld.de}

\author[E.~Isaak]{Elena Isaak}
\address{Elena Isaak\\
Fakult\"at f\"ur Mathematik\\
Universit\"at Bielefeld\\
Postfach 100 131\\
DE-33501 Bielefeld\\
Germany}
\email{eisaak@math.uni-bielefeld.de}

\author[R.~Kruse]{Raphael Kruse}
\address{Raphael Kruse\\
Technische Universit\"at Berlin\\
Institut f\"ur Mathematik, Secr. MA 5-3\\
Stra\ss e des 17.~Juni 136\\
DE-10623 Berlin\\
Germany}
\email{kruse@math.tu-berlin.de}

\keywords{stochastic differential equations, global monotonicity condition,
split-step backward Milstein method, projected Milstein method, mean-square
convergence, strong convergence, C-stability, B-consistency}
\subjclass[2010]{65C30, 65L20} 

\begin{abstract}
  This paper focuses on two variants of the Milstein scheme, 
  namely the split-step backward Milstein method and a newly proposed projected
  Milstein scheme, applied to stochastic differential equations which satisfy a
  global monotonicity condition. In particular, our assumptions include
  equations with super-linearly growing drift and diffusion coefficient
  functions and we show that both schemes are mean-square convergent of
  order $1$. Our analysis of the error of convergence with respect to the
  mean-square norm relies on the notion of stochastic
  C-stability and B-consistency, which was set up and applied to Euler-type
  schemes in [Beyn, Isaak, Kruse, J.~Sci.~Comp., 2015]. As a direct consequence
  we also obtain strong order $1$ convergence results for the split-step
  backward Euler method and the projected Euler-Maruyama scheme in the case of
  stochastic differential equations with additive noise. Our theoretical
  results are illustrated in a series of numerical experiments.
\end{abstract}

\maketitle

\section{Introduction}
\label{sec:intro}

More than four decades ago Grigori N.~Milstein proposed 
a new numerical method for the approximate integration of stochastic ordinary
differential equations (SODEs) in \cite{milstein1974} (see \cite{milstein1975}
for an English translation). This scheme is nowadays called the \emph{Milstein
method} and offers a higher order of accuracy than the classical
Euler-Maruyama scheme. In fact, G.~N.~Milstein showed that his method converges
with order $1$ to the exact solution with respect to the root mean square norm
under suitable conditions on the coefficient functions of the SODE while the
Euler-Maruyama scheme is only convergent of order $\frac{1}{2}$, in
general.

In its simplest form, that is for scalar stochastic differential equations
driven by a scalar Wiener process $W$, the Milstein method is given by the
recursion 
\begin{align}
  \label{eq:Mil1d}
  \begin{split}
    X_h(t + h) &= X_h(t) + h f(t, X_h(t) )
    + g(t, X_h(t)) \Delta_h W(t) \\
    &\quad + \frac{1}{2} \big( \frac{\partial g}{\partial x}  \circ g \big) (t,
    X_{h} (t) ) \big( \Delta_h W(t)^2 - h \big),
  \end{split}
\end{align}
where $h$ denotes the step size, $\Delta_h W(t) = W(t + h) - W(t)$ is the
stochastic increment, and $f$ and $g$ are the drift and diffusion
coefficient functions of the underlying SODE (Equation \eqref{sode} below
shows the SODE in the full generality considered in this paper).

Since the
derivation of the Milstein method in
\cite{milstein1974} relies on an iterated 
application of the It\=o formula, the error analysis requires the 
boundedness and continuity of the coefficient functions $f$ and $g$ and their
partial derivatives up to the fourth order. Similar conditions also appear in 
the standard literature on this topic \cite{kloeden1999, milstein1995,
milstein2004}. 

In more recent publications these conditions have been
relaxed: For instance in \cite{kruse2011} it is proved that the strong
order $1$ result for the scheme \eqref{eq:Mil1d} stays true if the coefficient
functions are only two times continuously differentiable with bounded partial
derivatives, provided the exact solution has sufficiently high moments and the
mapping $x \mapsto (\frac{\partial g}{\partial x} \circ g) (t, x)$ is globally
Lipschitz continuous for every $t \in [0,T]$. On the other hand, from the
results in \cite{hutzenthaler2011} it follows that the explicit Euler-Maruyama
method is divergent in the strong and weak sense if the coefficient functions
are super-linearly growing. Since the same reasoning also applies to the
Milstein scheme \eqref{eq:Mil1d} it is necessary to consider suitable variants
in this situation.

%
%

One possibility to treat super-linearly growing coefficient functions is
proposed in \cite{wang2013}. Here the authors combine the
Milstein scheme with the taming strategy from \cite{hutzenthaler2012}. This
allows to prove the strong convergence rate $1$ in the case of SODEs 
whose drift coefficient functions satisfy a one-sided
Lipschitz condition. The same approach is used in
\cite{kumar2015}, where the authors consider SODEs driven by L\`evy noise.
However, both papers still require that the diffusion coefficient functions are
globally Lipschitz continuous. 

This is not needed for the implicit variant of the Milstein scheme considered
in \cite{higham2013}, where the strong convergence
result also applies to certain  SODEs with super-linearly growing
diffusion coefficient functions. However, the authors only consider scalar
SODEs and did not determine the order of convergence. The first result
bypassing all these restrictions is found in \cite{zhang2014}, which
deals with an explicit first order method based on a variant of the taming
idea. A more recent result based on the taming strategy is also given in
\cite{kumar2016}. 

In this paper we propose two further variants of the Milstein scheme
which apply to multi-dimensional SODEs of the form \eqref{sode}. First, we
follow an idea from \cite{beyn2015} and study the \emph{projected Milstein
method} which consists of the standard explicit Milstein scheme together with a
nonlinear projection onto a sphere whose radius
is expanding with a negative power of the step size. The second scheme is a
Milstein-type variant of the split-step backward Euler scheme (see
\cite{higham2002b}) termed \emph{split-step backward Milstein method}.

For both schemes we prove the optimal strong convergence rate $1$ in the
following sense: Let $X \colon [0,T] \times \Omega \to \R^d$ and
$X_h \colon \{t_0, t_1, \ldots,t_N\} \times \Omega \to \R^d$ denote
the exact solution and its numerical approximation with corresponding step
size $h$. Then, there exists a constant $C$ independent of $h$ such that
\begin{align}
  \label{eq0:strerr}
  \max_{n \in \{1,\ldots,N\} } \| X(t_n) - X_h(t_n) \|_{L^2(\Omega;\R^d)} \le C
  h,
\end{align}
where $t_n = n h$. For the proof we essentially impose the global monotonicity
condition \eqref{eq3:onesided} and certain local Lipschitz assumptions on the
first order derivatives of the drift and diffusion coefficient
functions. For a precise statement of all our assumptions and the two
convergence results we refer to Assumption~\ref{as:fg} and 
Theorems~\ref{th:PMilconv} and \ref{cor:SSBMconv} below. Together with the
result on the balanced scheme found in \cite{zhang2014}, these theorems are the
first results which determine the optimal strong convergence rate for some
Milstein-type schemes without any linear growth or global Lipschitz assumption
on the diffusion coefficient functions and for multi-dimensional SODEs.

The remainder of this paper is organized as follows: In Section~\ref{sec:prob}
we introduce the projected Milstein method and the split-step backward Milstein
scheme in full detail. We state all assumptions and the convergence
results, which are then proved in later sections. Further, we apply the
convergence results to SODEs with additive noise for which the Milstein-type
schemes coincide with the corresponding Euler-type scheme. 

The proofs follow the same steps as the error analysis 
in \cite{beyn2015}. In order to keep this paper as self-contained as possible
we briefly recall the notions of C-stability and B-consistency and the abstract
convergence theorem from \cite{beyn2015} in Section~\ref{sec:def}. Then, 
in the following four sections we verify that the two considered Milstein-type
schemes are indeed stable and consistent in the
sense of Section~\ref{sec:def}. Finally, in Section~\ref{sec:exp} we report on
a couple of numerical experiments which illustrate our theoretical findings.
Note that both examples include non-globally Lipschitz continuous coefficient
functions, which are not covered by the standard results found in
\cite{kloeden1999, milstein1995}.

\section{Assumptions and main results}
\label{sec:prob}
This section contains a detailed description of our assumptions on the
stochastic differential equation, under which our strong convergence results
hold. Further, we introduce the projected Milstein method and the split-step
backward Milstein scheme and we state our main results.

Our starting point is the stochastic ordinary differential equation
\eqref{sode} below.
We apply the same notation as in \cite{beyn2015} and we fix $d,m \in \N$, $T
\in (0,\infty)$, and a filtered probability space $(\Omega, \F, (\F_t)_{t \in
[0,T]},\P)$ satisfying the usual conditions. By $X \colon
[0,T] \times \Omega \to \R^d$ we denote a solution to the SODE
\begin{align}
  \label{sode}
  \begin{split}
    \diff{X(t)} &= f(t,X(t)) \diff{t} + \sum_{r=1}^m g^r(t,X(t))
    \diff{W^r(t)},\quad t \in [0,T],  \\
    X(0)&=X_0.
  \end{split}
\end{align}
Here $f\colon [0,T] \times \R^d \to \R^d$ stands for the drift coefficient
function, while $g^r \colon [0,T]\times \R^d \to \R^d$, $r=1,\ldots,m$,
are the diffusion coefficient functions. By $W^r \colon [0,T] \times
\Omega \to \R$, $r = 1,\ldots,m$, we denote an independent family of
real-valued standard $(\F_t)_{t\in [0,T]}$-Brownian motions on
$(\Omega,\mathcal{F},\P)$. For a sufficiently large $p \in [2,\infty)$ the
initial condition $X_0$ is assumed to be an element of the space
$L^p(\Omega,\F_0,\P;\R^d)$. 

Let us fix some further notation: We write
$\langle \cdot, \cdot \rangle$ and $|\cdot|$ for the Euclidean inner product
and the Euclidean norm on $\R^d$, respectively.  
Further, we denote by $\LB(\R^d) = \LB(\R^d,\R^d)$ the set of all bounded
linear operators on $\R^d$ endowed with the matrix norm $| \cdot |_{\LB(\R^d)}$
induced by the Euclidean norm. For a sufficiently smooth mapping $f
\colon [0,T] \times \R^d \to \R^d$ and a given $t \in [0,T]$ we denote by
$\frac{\partial f}{\partial x} (t,x) \in \LB(\R^d)$ the Jacobian matrix of the
mapping $\R^d \ni x \mapsto f(t,x) \in \R^d$.

Having established this we formulate the conditions on the drift and the
diffusion coefficient functions:

\begin{assumption}
  \label{as:fg}
  The mappings $f \colon [0,T] \times \R^d \to \R^d$ and
  $g^r \colon [0,T] \times \R^d \to \R^d$, $r = 1,\ldots,m$, are
  continuously differentiable. Further, there exist $L \in (0,\infty)$
  and $\eta \in (\frac{1}{2},\infty)$ such that for all $t \in [0,T]$ and
  $x_1,x_2 \in \R^d$ it holds
  \begin{align}
    \label{eq3:onesided}
    \big\langle f(t,x_1) - f(t,x_2), x_1-x_2 \big\rangle + \eta \sum_{r =
    1}^m  \big| g^r(t,x_1) - g^r(t,x_2) \big|^2 &\le L | x_1 - x_2 |^2.
  \end{align}
  In addition, there exists $q \in [2,\infty)$ such that 
  \begin{align}
    \big| \tfrac{\partial f}{\partial x} (t,x_1) - \tfrac{\partial f}{\partial
    x} (t,x_2) \big|_{\mathcal{L}(\R^d)} \le L \big( 1 +
    |x_1| + |x_2 | \big)^{q-2}  | x_1 - x_2 | 
    \label{eq3:loc_lip_f_x}
  \end{align}
  and, for every $r = 1,\ldots,m$,
  \begin{align}
    \big| \tfrac{\partial g^r}{\partial x} (t,x_1) - \tfrac{\partial
    g^r}{\partial x} (t,x_2)\big|_{\mathcal{L}(\R^d)} &\le L \big( 1 +
    |x_1| + |x_2 | \big)^{\frac{q-3}{2}}  | x_1 - x_2 |
    \label{eq3:loc_Lip_g_x}
  \end{align}
  for all $t \in [0,T]$ and $x_1,x_2 \in \R^d$. Moreover, it holds
  \begin{align}
    \big| \tfrac{\partial f}{\partial t} (t,x) \big| \le L \big( 1 + |x |
    \big)^q, \quad 
    \big| \tfrac{\partial g^r}{\partial t} (t,x) \big| &\le L \big( 1
 + |x|\big)^{\frac{q+1}{2}} , \label{eq3:poly_growth_t}
  \end{align}    
  for all $t \in [0,T]$, $x \in \R^d$, and all $r = 1,\ldots,m$.
\end{assumption}

First we note that Assumption~\ref{as:fg} is slightly weaker than the
conditions imposed in \cite[Lemma~4.2]{zhang2014} in terms of smoothness
requirements on the coefficient functions. Further, we recall that Equation
\eqref{eq3:onesided} is often termed \emph{global monotonicity condition} in
the literature. It is easy to check that Assumption~\ref{as:fg} 
is satisfied (with $q=3$) if $f$ and $g^r$ and all their first order partial
derivatives are globally Lipschitz continuous. However, Assumption~\ref{as:fg}
includes several SODEs which cannot be treated by the standard results found in
\cite{kloeden1999,milstein1995}. We refer to Section~\ref{sec:exp} for
two more concrete examples.

For a possibly enlarged $L$ the following estimates are an immediate
consequence of Assumption~\ref{as:fg} and the mean value theorem: For all
$t,t_1,t_2 \in [0,T]$ and $x,x_1,x_2 \in \R^d$ it holds
\begin{align}
  |f(t,x) | &\le L \big( 1 + |x | \big)^q,
  \label{eq3:poly_growth}\\
  \big| \tfrac{\partial f}{\partial x} (t,x) \big|_{\mathcal{L}(\R^d)} 
  &\le L \big( 1 + |x | \big)^{q-1},\label{eq3:poly_growth_x}\\
  | f(t_1,x) - f(t_2,x) | &\le L \big( 1 +
  |x|\big)^q   |t_1 - t_2|,\label{eq3:loc_Lip_t}\\
  | f(t,x_1) - f(t,x_2) | &\le L \big( 1 +
  |x_1| + |x_2 | \big)^{q-1}  | x_1 - x_2 |,
  \label{eq3:loc_Lip}
\end{align}
and, for all $r=1,\ldots,m$, 
\begin{align}
  | g^r(t,x)| &\le L \big( 1 + |x | \big)^{\frac{q+1}{2}},
  \label{eq3:poly_growth_g}\\
  \big| \tfrac{\partial g^r}{\partial x} (t,x) \big|_{\mathcal{L}(\R^d)} &\le
  L \big( 1 + |x | \big)^{\frac{q-1}{2}}, \label{eq3:poly_growth_g_x}\\
  | g^r(t_1,x) - g^r(t_2,x) | &\le L \big( 1 +
  |x| \big)^{\frac{q+1}{2}}  |t_1 - t_2|,\label{eq3:loc_Lip_g_t}\\
  | g^r(t,x_1) - g^r(t,x_2) | &\le L \big( 1 +
  |x_1| + |x_2 | \big)^{\frac{q-1}{2}}  | x_1 - x_2 |.
  \label{eq3:loc_Lip_g}
\end{align}    
Thus, Assumption~\ref{as:fg} implies \cite[Assumption~2.1]{beyn2015} and all
results of that paper also hold true in the situation considered here. Note that in this paper we use the weights $(1+|x|)^p$ instead of $1+|x|^p$
as in \cite{beyn2015}. For $p \ge 0$ this makes no difference, 
however in condition \eqref{eq3:loc_Lip_g_x} we may have $p=\frac{q-3}{2}<0$ if $2\le q <3$, so that Lipschitz constants actually decrease at infinity.

In the following it will be convenient to introduce the abbreviation
\begin{align}
  \label{eq3:grr}
  g^{r_1,r_2}(t,x) := 
  \frac{\partial g^{r_1}}{\partial x} (t,x) g^{r_2}(t,x), \quad t \in [0,T],\;
  x \in \R^d,
\end{align}
for $r_1,r_2 = 1,\ldots,m$. As above, one easily verifies under
Assumption~\ref{as:fg} that the mappings $g^{r_1,r_2}$ satisfy 
(for a possibly larger $L$) the polynomial
growth bound 
\begin{align}
  \label{eq3:grr_poly_growth}
  \big| g^{r_1,r_2}(t,x) \big| &\le L \big( 1 + |x| \big)^{q}  
\end{align}
as well as the local Lipschitz bound  
\begin{align}
  \label{eq3:grr_loc_Lip}
  \big| g^{r_1,r_2}(t,x_1) - g^{r_1,r_2}(t,x_2) \big| &\le L \big( 1 +
  |x_1| + |x_2 | \big)^{q-1}  | x_1 - x_2 |
\end{align}
for all $x,x_1,x_2 \in \R^d$, $t \in [0,T]$, and $r_1,r_2 = 1,\ldots,m$.  For
this conclusion to hold in case $q<3$, it is essential to  
use the modified weight function in \eqref{eq3:loc_Lip_g_x}.

We say that an almost surely continuous and $(\F_t)_{t \in [0,T]}$-adapted
stochastic process $X \colon [0,T] \times \Omega \to \R^d$ is a solution to
\eqref{sode} if it satisfies $\P$-almost
surely the integral equation
\begin{align}
  \label{exact}
  X(t) = X_0 + \int_{0}^{t} f(s,X(s)) \diff{s} + \sum_{r = 1}^m \int_{0}^t
  g^r(s,X(s)) \diff{W^r(s)}
\end{align}
for all $t \in [0,T]$. It is well-known that Assumption~\ref{as:fg}
is sufficient to ensure the existence of a unique solution to
\eqref{sode}, see for instance \cite{krylov1999}, \cite[Chap.~2.3]{mao1997} or 
the SODE chapter in
\cite[Chap.~3]{roeckner2007}. 

In addition, the exact solution has finite $p$-th moments, that is
\begin{align}
  \label{eq:moments}
  \sup_{t \in [0,T]} \big\| X(t) \big\|_{L^p(\Omega;\R^d)} < \infty,
\end{align}
if the following \emph{global coercivity condition} is satisfied: 
There exist $C \in (0,\infty)$ and $p \in [2, \infty)$ such that 
\begin{align}
  \label{eq:growthcond}
  \big\langle f(t,x), x \big\rangle + \frac{p-1}{2} \sum_{r =
  1}^m  \big| g^r(t,x) \big|^2 &\le C \big(1 + | x |^2 \big)
\end{align}
for all $x \in \R^d$, $t \in [0,T]$. A proof is found, for example, 
in \cite[Chap.~2.4]{mao1997}.  

For the formulation of the numerical methods we recall the following
terminology from \cite{beyn2015}: By $\overline{h} \in (0,T]$ 
we denote an \emph{upper step size bound}. Then, for every $N \in \N$ we say
that $h = (h_1,\ldots,h_{N}) \in (0,\overline{h}]^{N}$ is a \emph{vector of
(deterministic) step sizes} if $\sum_{i = 1}^N h_i = T$. Every vector of step
sizes $h$ induces a set of temporal grid points $\mathcal{T}_h$ given by
\begin{align*}
  \mathcal{T}_h := \Big\{ t_n := \sum_{i = 1}^n h_i \, : \, n = 0,\ldots,N
  \Big\},
\end{align*}
where $\sum_{\emptyset} = 0$. For short we write
\begin{align*}
  |h| := \max_{i \in \{ 1,\ldots,N\}} h_i
\end{align*}
for the \emph{maximal step size} in $h$.

Moreover, we recall from \cite{kloeden1999, milstein1995} the following
notation for the stochastic increments: Let $t,s \in [0,T]$ with $s < t$. Then
we define
\begin{align}
  \label{eq2:stochincr1}
  I_{(r)}^{s,t} := \int_s^t \diff{W^{r}(\tau)},
\end{align}
for $r \in \{1,\ldots,m\}$ and, similarly, 
\begin{align}
  \label{eq2:stochincr2}
  I_{(r_1,r_2)}^{s,t} := \int_s^t \int_s^{\tau_1} \diff{W^{r_1}(\tau_2)}
  \diff{W^{r_2}(\tau_1)},
\end{align}
where $r_1, r_2 \in \{1,\ldots,m\}$. 
The joint family of the iterated stochastic integrals
$(I_{(r_1,r_2)}^{s,t})_{r_1,r_2 = 1}^m$ is not easily generated on a computer.
Besides special cases such as commutative noise one relies on an additional
approximation method from e.g.\ \cite{gaines1994, ryden2001, wiktorsson2001}.
We also refer to the corresponding discussion in
\cite[Chap.~10.3]{kloeden1999}.  

The first numerical scheme, which we study in this paper, is an  
explicit one-step scheme and termed \emph{projected
Milstein method} (PMil). It is the Milstein-type counterpart of the projected
Euler-Maruyama method form \cite{beyn2015} and consists of the standard Milstein
scheme and a projection onto a ball in $\R^d$ whose radius is expanding with 
a negative power of the step size.

To be more precise, let $h \in (0,\overline{h}]^N$, $N \in \N$, be an arbitrary
vector of step sizes with upper step size bound $\overline{h} = 1$. 
For a given parameter $\alpha \in (0,\infty)$ the PMil method is determined by
the recursion 
\begin{align}
  \label{eq:PMildef}
  \begin{split}
    \overline{X}_h^{\mathrm{PMil}}(t_i) &= \min\big( 1 , h_i^{-\alpha} \big|
    X_h^{\mathrm{PMil}}(t_{i-1}) \big|^{-1} \big)
    X_h^{\mathrm{PMil}}(t_{i-1}),\\
    X_h^{\mathrm{PMil}}(t_i) &= \overline{X}_h^{\mathrm{PMil}}(t_{i})
    + h_i f(t_{i-1}, \overline{X}_h^{\mathrm{PMil}}(t_{i}) )
    + \sum_{r = 1}^m g^r(t_{i-1}, \overline{X}_h^{\mathrm{PMil}}(t_{i}))
    I_{(r)}^{t_{i-1},t_{i}}\\
    &\quad  + \sum_{r_1, r_2 = 1}^m g^{r_1,r_2} (t_{i-1},
    \overline{X}_{h}^{\mathrm{PMil}}(t_i)) 
    I_{(r_2,r_1)}^{t_{i-1},t_{i}}, \quad \text{ for } 1 \le i \le N,
  \end{split}
\end{align}
where $X_h^{\mathrm{PMil}}(0) := X_0$. 
The results of Section~\ref{sec:PMilstab} indicate that the parameter value for
$\alpha$ is optimally chosen by setting $\alpha = \frac{1}{2(q-1)}$ 
in dependence of the growth rate $q$ appearing in Assumption~\ref{as:fg}. 
One aim of this paper is the proof of the following
strong convergence result for the PMil method. It follows directly from
Theorems~\ref{th:PMilstab} and \ref{th:PMilcons} as well as
Theorem~\ref{th:Bconv}.  

\begin{theorem}
  \label{th:PMilconv}
  Let Assumption~\ref{as:fg} be satisfied with polynomial growth rate $q \in
  [2,\infty)$. If the exact solution $X$ to \eqref{sode} satisfies $\sup_{\tau
  \in [0,T]} \| X(\tau) \|_{L^{8q-6}(\Omega;\R^d)} < \infty$, then the
  projected Milstein method \eqref{eq:PMildef} with parameter value $\alpha =
  \frac{1}{2(q-1)}$ and with arbitrary upper step size bound $\overline{h} \in
  (0,1]$ is strongly convergent of order $\gamma = 1$. 
\end{theorem}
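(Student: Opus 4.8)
The plan is to deduce the statement from the abstract convergence theorem, Theorem~\ref{th:Bconv}: a stochastically C-stable one-step scheme which is stochastically B-consistent of order $\gamma$ is strongly convergent of order $\gamma$ in the sense of \eqref{eq0:strerr}. Hence it suffices to cast the recursion \eqref{eq:PMildef} as a one-step scheme in the sense of Section~\ref{sec:def} and then to quote its stochastic C-stability from Theorem~\ref{th:PMilstab} and its stochastic B-consistency of order $\gamma = 1$ from Theorem~\ref{th:PMilcons}, after which the three theorems combine directly. Identifying the increment map $\Phi_{t_{i-1},t_i}(y) = \overline X + h_i f(t_{i-1},\overline X) + \sum_{r} g^r(t_{i-1},\overline X) I^{t_{i-1},t_i}_{(r)} + \sum_{r_1,r_2} g^{r_1,r_2}(t_{i-1},\overline X) I^{t_{i-1},t_i}_{(r_2,r_1)}$ with $\overline X = \min(1, h_i^{-\alpha}|y|^{-1}) y$, adaptedness is clear, and the a priori bound $|\overline X| \le h_i^{-\alpha}$ together with the polynomial growth bounds \eqref{eq3:poly_growth}, \eqref{eq3:poly_growth_g} and \eqref{eq3:grr_poly_growth} makes all the moments appearing in the definitions of Section~\ref{sec:def} finite, so that the framework genuinely applies.

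The parameter choice $\alpha = \frac{1}{2(q-1)}$ is what makes both structural properties work. For C-stability the truncation confines the arguments of $f$, $g^r$ and $g^{r_1,r_2}$ to the ball of radius $h_i^{-\alpha}$; on that ball the Milstein term $g^{r_1,r_2}$ has effective Lipschitz constant of order $(h_i^{-\alpha})^{q-1} = h_i^{-1/2}$ by \eqref{eq3:grr_loc_Lip}, and when multiplied by the $L^2$-scale $h_i$ of the iterated increment $I^{t_{i-1},t_i}_{(r_2,r_1)}$ this contributes only a term of order $h_i$ to the mean-square perturbation estimate, while the drift and diffusion parts combine through the global monotonicity condition \eqref{eq3:onesided} and the $1$-Lipschitz property of the projection; this is exactly the structure demanded by stochastic C-stability, and it requires no assumption on the moments of the exact solution. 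For B-consistency the local error splits into the genuine Milstein truncation error, which an It\=o--Taylor expansion of the exact solution over $[t_{i-1},t_i]$ together with the local Lipschitz bounds \eqref{eq3:loc_Lip}, \eqref{eq3:loc_Lip_g}, \eqref{eq3:grr_loc_Lip} and the growth bounds \eqref{eq3:poly_growth}, \eqref{eq3:poly_growth_g} reduces to remainder terms of the appropriate orders $h_i^{2}$ (in conditional mean) and $h_i^{3/2}$ (in $L^2$), plus the projection bias $(\mathrm{id} - \mathrm{proj}_{h_i})(X(t_{i-1}))$, which is supported on $\{|X(t_{i-1})| > h_i^{-\alpha}\}$ and whose contribution is estimated by the H\"older and Markov inequalities; here the interplay of the negative power $h_i^{-\alpha}$ with the available moments, and of the degree-$(2q-1)$ polynomial weights arising from the remainder terms with products of iterated Wiener integrals, is what forces — and is matched by — the integrability hypothesis $\sup_{\tau \in [0,T]}\|X(\tau)\|_{L^{8q-6}(\Omega;\R^d)} < \infty$.

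The logical assembly is then immediate: Theorem~\ref{th:PMilstab} supplies C-stability, Theorem~\ref{th:PMilcons} supplies B-consistency of order $1$ under the moment hypothesis, and Theorem~\ref{th:Bconv} yields strong convergence of order $\gamma = 1$; passing to a smaller upper step size bound $\overline h \in (0,1]$ changes nothing, since the relevant constants are non-decreasing in the step sizes. The main obstacle is therefore not this assembly but the moment bookkeeping buried in Theorems~\ref{th:PMilstab} and especially \ref{th:PMilcons}: one must track, through repeated uses of Cauchy--Schwarz and the Burkholder--Davis--Gundy inequality, exactly which integrability of the exact solution — and, via $|\overline X_h(t_i)| \le h_i^{-\alpha}$, of the numerical iterates — each It\=o--Taylor remainder term and the projection error consume, and then verify that the single exponent $8q-6$ dominates all of these requirements. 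A secondary technical point is to check that the numerical one-step map satisfies the precise inequalities in the definitions of Section~\ref{sec:def} verbatim, so that Theorem~\ref{th:Bconv} applies without modification.
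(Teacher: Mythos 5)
Your proposal is correct and follows exactly the route the paper takes: Theorem~\ref{th:PMilconv} is obtained by combining the C-stability result of Theorem~\ref{th:PMilstab} and the B-consistency result of Theorem~\ref{th:PMilcons} with the abstract convergence Theorem~\ref{th:Bconv}, after checking (as in Proposition~\ref{prop:PMil}) that the scheme is a well-defined one-step method in the sense of Definition~\ref{def:onestep}. Your additional commentary on the role of $\alpha=\frac{1}{2(q-1)}$ and the moment exponent $8q-6$ accurately reflects where the real work is done in Sections~\ref{sec:PMilstab} and~\ref{sec:PMilcons}.
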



Next, we come to the second numerical scheme, which is called
\emph{split-step backward Milstein method} (SSBM). For a suitable upper step
size bound $\overline{h} \in (0,T]$ and a given vector of step
sizes $h = (h_1,\ldots,h_N) \in (0,\overline{h}]^N$, $N \in \N$, this method is
defined by setting $X_h^{\mathrm{SSBM}}(0) = X_0$ and by the recursion 
\begin{align}
  \label{eq:SSBMdef}
  \begin{split}
    \overline{X}_h^{\mathrm{SSBM}}(t_i) &= X_h^{\mathrm{SSBM}}(t_{i-1}) + h_i
    f(t_{i}, \overline{X}_h^{\mathrm{SSBM}}(t_i)),\\
    X_h^{\mathrm{SSBM}}(t_{i}) &= \overline{X}_h^{\mathrm{SSBM}}(t_i) + \sum_{r
    = 1}^m g^r(t_{i}, \overline{X}_{h}^{\mathrm{SSBM}}(t_i))
    I_{(r)}^{t_{i-1},t_{i}}\\
    &\quad + \sum_{r_1, r_2 = 1}^m g^{r_1,r_2}(t_{i},
    \overline{X}_{h}^{\mathrm{SSBM}}(t_i)) I_{(r_2,r_1)}^{t_{i-1},t_{i}},  
  \end{split}
\end{align}
for every $i = 1,\ldots,N$.

Let us note that the recursion defining the SSBM method evaluates the
diffusion coefficient functions $g^r$ at time $t_{i}$ in the $i$-th step.
This phenomenon was already apparent in the definition of the split-step
backward Euler method in \cite{beyn2015}. It turns out that by this
modification we avoid some technical issues in the proofs as condition
\eqref{eq2:onesidedSSBM} is applied to $f$ and $g^r$, $r =
1,\ldots,m$, simultaneously at the same point $t \in [0,T]$ in time. Compare
also with the inequality \eqref{eq:stab} further below.

It is shown in Section~\ref{sec:SSBMstab} that the
SSBM scheme is a well-defined stochastic one-step method under
Assumption~\ref{as:fg}. The second main result of this paper is the proof
of the following strong convergence result:

\begin{theorem}
  \label{cor:SSBMconv}
  Let Assumption~\ref{as:fg} be satisfied with $L \in (0,
  \infty)$ and $q \in [2,\infty)$. In addition, we assume that there exist
  $\eta_1 \in (1,\infty)$ and $\eta_2 \in (0,\infty)$ such that it holds 
  \begin{align}
    \label{eq2:onesidedSSBM}
    \begin{split}
      &\big\langle f(t,x_1) - f(t,x_2), x_1-x_2 \big\rangle + \eta_1 \sum_{r =
      1}^m  \big| g^r(t,x_1) - g^r(t,x_2) \big|^2 \\
      &\quad + \eta_2 \sum_{r_1,r_2 =
      1}^m  \big| g^{r_1,r_2}(t,x_1) - g^{r_1,r_2}(t,x_2) \big|^2
      \le L | x_1 - x_2 |^2
    \end{split}
  \end{align}
  for all $x_1, x_2 \in \R^d$. If the solution $X$ to
  \eqref{sode} satisfies $\sup_{\tau \in [0,T]} \| X(\tau)
  \|_{L^{6q-4}(\Omega;\R^d)} < \infty$, then the split-step backward Milstein
  method \eqref{eq:SSBMdef} with arbitrary upper step size bound
  $\overline{h} \in (0, \max(L^{-1},\frac{2 \eta_2}{\eta_1}))$
  is strongly convergent of order $\gamma = 1$. 
\end{theorem}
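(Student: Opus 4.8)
The plan is to derive the statement from the abstract convergence theorem of Section~\ref{sec:def}, Theorem~\ref{th:Bconv}: it suffices to check that, under the stated hypotheses and for the admissible step sizes, the split-step backward Milstein scheme \eqref{eq:SSBMdef} is a well-defined stochastic one-step method which is stochastically C-stable and stochastically B-consistent of order $\gamma = 1$; the estimate \eqref{eq0:strerr} then follows immediately. The well-definedness is essentially the content of Section~\ref{sec:SSBMstab}: the intermediate stage $\overline{X}$ solves the implicit equation $\overline{X} = Y + h f(t,\overline{X})$, and since \eqref{eq3:onesided} (hence \eqref{eq2:onesidedSSBM}) makes $x \mapsto x - h f(t,x)$ Lipschitz continuous and strongly monotone with modulus $1 - hL > 0$ as soon as $hL < 1$, this map is a homeomorphism of $\R^d$; applying the one-sided bound with second argument $0$ also yields the a priori estimate $|\overline{X}| \le (1-hL)^{-1}\big(|Y| + h\sup_{t\in[0,T]}|f(t,0)|\big)$, which I shall use to transfer moment bounds to $\overline{X}$.

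For \emph{stochastic C-stability} I would take one step of \eqref{eq:SSBMdef} with step size $h$ from two $\F_{t_{i-1}}$-measurable, square-integrable inputs $Y_1, Y_2$, writing $\overline{X}_1, \overline{X}_2$ for the intermediate stages, $\Delta\overline{X} = \overline{X}_1 - \overline{X}_2$, and $\Delta\Psi$ for the difference of the two one-step outputs; put $\Delta g^r = g^r(t_i,\overline{X}_1) - g^r(t_i,\overline{X}_2)$ and analogously $\Delta g^{r_1,r_2}$. The crucial structural remark is that $\overline{X}_j$, and hence every coefficient evaluation at the stage, is $\F_{t_{i-1}}$-measurable, whereas the increments $I_{(r)}^{t_{i-1},t_i}$ and $I_{(r_1,r_2)}^{t_{i-1},t_i}$ are independent of $\F_{t_{i-1}}$ and centred; thus $\E[\Psi(Y_j)\mid\F_{t_{i-1}}] = \overline{X}_j$. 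Decomposing $\Delta\Psi$ into the $\F_{t_{i-1}}$-measurable part $\Delta\overline{X}$ and a conditionally centred remainder, orthogonality together with the standard second-moment formulas for the iterated increments gives
\begin{align*}
  \E\big[ |\Delta\Psi|^2 \mid \F_{t_{i-1}} \big] = |\Delta\overline{X}|^2 + h \sum_{r=1}^m |\Delta g^r|^2 + \tfrac{h^2}{2} \sum_{r_1,r_2=1}^m |\Delta g^{r_1,r_2}|^2 .
\end{align*}
On the other hand, taking the Euclidean inner product of $\Delta\overline{X}$ with the identity $\overline{X}_j = Y_j + h f(t_i,\overline{X}_j)$ and invoking the bundled monotonicity \eqref{eq2:onesidedSSBM} yields $(1-hL)|\Delta\overline{X}|^2 + h\eta_1 \sum_r |\Delta g^r|^2 + h\eta_2 \sum_{r_1,r_2} |\Delta g^{r_1,r_2}|^2 \le \langle \Delta\overline{X}, Y_1 - Y_2 \rangle$. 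Since $\eta_1 > 1$ and the step-size bound entails both $hL < 1$ and $h\eta_1 < 2\eta_2$, the two sums on the left dominate the two extra sums in the identity above; combining the two displays, using Cauchy--Schwarz and $|\Delta\overline{X}| \le (1-hL)^{-1}|Y_1 - Y_2|$, a short computation leaves $\E[|\Delta\Psi|^2\mid\F_{t_{i-1}}] \le (1-hL)^{-2}|Y_1-Y_2|^2 \le (1+Ch)|Y_1-Y_2|^2$. This establishes C-stability; it is the cleanest step, as no moment bounds enter.

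For \emph{stochastic B-consistency} I would compare one step of \eqref{eq:SSBMdef} started from the exact solution at $t_{i-1}$ with $X(t_i)$, using \eqref{exact} and the a priori bound to control moments of $\overline{X}$ in terms of those of $X(t_{i-1})$. The drift contribution $h f(t_i, \overline{X}) - \int_{t_{i-1}}^{t_i} f(s,X(s))\diff{s}$ is routine: splitting off $h f(t_{i-1}, X(t_{i-1}))$, expanding once more inside the integral and applying \eqref{eq3:loc_Lip_t}, \eqref{eq3:loc_Lip}, \eqref{eq3:poly_growth}, the short-time increment estimates $\|X(s) - X(t_{i-1})\|_{L^p} \lesssim (s-t_{i-1})^{1/2}$ and \eqref{eq:moments}, one obtains an $L^2$-error of order $h^{3/2}$ and, after conditioning on $\F_{t_{i-1}}$, of order $h^2$. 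The diffusion contribution is the heart of the matter. Since Assumption~\ref{as:fg} only grants $\mathcal{C}^1$-coefficients, a second-order It\=o--Taylor expansion is not available; instead I would expand $g^r(s,X(s)) - g^r(t_{i-1},X(t_{i-1}))$ by the mean value theorem, substitute $X(s) - X(t_{i-1}) = \int_{t_{i-1}}^s f\diff{u} + \sum_{r'}\int_{t_{i-1}}^s g^{r'}\diff{W^{r'}}$, and isolate the leading term $\sum_{r'} g^{r,r'}(t_{i-1},X(t_{i-1}))\big(W^{r'}(s) - W^{r'}(t_{i-1})\big)$, whose It\=o integral against $\diff{W^r}$ over $[t_{i-1},t_i]$ reproduces precisely the discrete Milstein correction $\sum_{r'} g^{r,r'}(t_{i-1},X(t_{i-1})) I_{(r',r)}^{t_{i-1},t_i}$. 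All remaining terms --- those coming from varying the argument of $\tfrac{\partial g^r}{\partial x}$, from the higher order part of the increment $X(s) - X(t_{i-1})$, from the temporal increments, and from replacing $(t_{i-1},X(t_{i-1}))$ by $(t_i,\overline{X})$ in the leading coefficients --- are then estimated by $O(h^{3/2})$ in $L^2$ with the help of the local Lipschitz bounds \eqref{eq3:loc_lip_f_x}, \eqref{eq3:loc_Lip_g_x}, \eqref{eq3:grr_loc_Lip}, the polynomial growth bounds \eqref{eq3:grr_poly_growth}, the a priori estimate for $\overline{X}$, short-time increment estimates for $X$, and H\"older's inequality; conditioning again annihilates the odd stochastic integrals, so the conditional mean of the local error remains $O(h^2)$. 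This yields B-consistency of order $\gamma = 1$, and Theorem~\ref{th:Bconv} completes the proof.

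I expect the diffusion part of the B-consistency step to be the main obstacle. One has to rebuild the Milstein local error expansion entirely from the first-order differentiability and the super-linear local Lipschitz conditions \eqref{eq3:loc_lip_f_x}, \eqref{eq3:loc_Lip_g_x}, \eqref{eq3:grr_loc_Lip}, in place of the classical $\mathcal{C}^2$/$\mathcal{C}^4$ assumptions, and then carefully balance, through H\"older's inequality, the polynomial degrees generated by the growth of the coefficients and of their remainders against the available moments of $X$. It is exactly this bookkeeping in the worst-case remainder terms that pins down the moment requirement $\sup_{\tau\in[0,T]}\|X(\tau)\|_{L^{6q-4}(\Omega;\R^d)} < \infty$.
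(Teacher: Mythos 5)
Your route is exactly the paper's: the theorem is deduced from the abstract convergence result (Theorem~\ref{th:Bconv}) by verifying that the SSBM scheme is a well-defined one-step method (the homeomorphism argument of Proposition~\ref{prop:homeomorph}), stochastically C-stable (Theorem~\ref{th:SSBMstab}) and stochastically B-consistent of order $1$ (Theorem~\ref{th:SSBMcons}). Your key monotonicity inequality for the implicit stage, $(1-hL)|\Delta\overline{X}|^2 + h\eta_1\sum_r|\Delta g^r|^2 + h\eta_2\sum_{r_1,r_2}|\Delta g^{r_1,r_2}|^2 \le \langle \Delta\overline{X},\,Y_1-Y_2\rangle$, is precisely the paper's estimate \eqref{eq:stab}, and your plan for the diffusion part of the local error (mean-value expansion of $g^r(s,X(s))-g^r(\cdot,X(t))$ isolating the Milstein correction, with remainders controlled via \eqref{eq3:loc_Lip_g_x}, the homeomorphism bounds, and moment/H\"older bookkeeping) is the content of Lemmas~\ref{lem:cons2} and \ref{lem:est_homeo} and of the terms $T_1,\dots,T_5$ in the proof of Theorem~\ref{th:SSBMcons}.

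One step of your C-stability argument targets the wrong inequality. Definition~\ref{def:cstab} does not ask for a bound on the plain conditional second moment $\E[|\Delta\Psi|^2\mid\F_t]$; it asks for $\|\E[\Delta\Psi\mid\F_t]\|^2_{L^2} + \nu\,\|(\id-\E[\,\cdot\,\mid\F_t])\Delta\Psi\|^2_{L^2} \le (1+C\delta)\|Y_1-Y_2\|^2_{L^2}$ for some $\nu$ \emph{strictly larger than} $1$, and this strict inequality is what the proof of Theorem~\ref{th:Bconv} uses to absorb the martingale contributions in the global error recursion. Your concluding display, $\E[|\Delta\Psi|^2\mid\F_{t_{i-1}}]\le(1-hL)^{-2}|Y_1-Y_2|^2$, is the $\nu=1$ version and by itself does not establish C-stability in the paper's sense. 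The fix is already contained in your own intermediate inequality: take $\nu=\eta_1>1$, note that $\nu\cdot h\sum_r\|\Delta g^r\|^2=\eta_1 h\sum_r\|\Delta g^r\|^2$ matches the second term on your left-hand side, and that $\nu\cdot\tfrac{h^2}{2}\sum_{r_1,r_2}\|\Delta g^{r_1,r_2}\|^2\le \eta_2 h\sum_{r_1,r_2}\|\Delta g^{r_1,r_2}\|^2$ precisely because the step-size restriction gives $\tfrac{1}{2}\eta_1 h\le\tfrac{1}{2}\eta_1\overline{h}<\eta_2$; this is how Theorem~\ref{th:SSBMstab} concludes. (Relatedly, you correctly read the admissibility condition as requiring \emph{both} $hL<1$ and $h\eta_1<2\eta_2$, which is what all the proofs actually use, even though the theorem statement writes a $\max$ rather than a $\min$.) With that adjustment your argument coincides with the paper's.
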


As we show below this theorem follows directly from Theorem~\ref{th:Bconv}
together with Theorems~\ref{th:SSBMstab} and \ref{th:SSBMcons}. Note that
\eqref{eq2:onesidedSSBM} is more restrictive than the global monotonicity
condition \eqref{eq3:onesided} if the mappings $g^{r_1,r_2}$ are not globally
Lipschitz continuous for all $r_1,r_2 = 1,\ldots,m$.

In the remainder of this section we briefly
summarize the corresponding convergence results in the case of stochastic
differential equations with \emph{additive noise}, that is
if the mappings $g^r$, $r = 1,\ldots,m$, do not depend explicitly on
the state of $X$.  In this case 
it is well-known that Milstein-type schemes coincide with their Euler-type
counterparts.

To be more precise, we consider the solution $X \colon [0,T]
\times \Omega \to \R^d$ to an SODE of the form
\begin{align}
  \label{sode_add}
  \begin{split}
    \diff{X(t)} &= f(t,X(t)) \diff{t} + \sum_{r=1}^m g^r(t)
    \diff{W^r(t)},\quad t \in [0,T],  \\
    X(0)&=X_0.
  \end{split}
\end{align}
In this case, the conditions on the drift coefficient function $f\colon [0,T]
\times \R^d \to \R^d$ and the diffusion coefficient functions $g^r \colon [0,T]
\to \R^d$, $r=1,\ldots,m$, in Assumption~\ref{as:fg} simplify to

\begin{assumption}[Additive noise]
  \label{as:fg_add}
  The coefficient functions $f \colon [0,T] \times \R^d \to \R^d$ and
  $g^r \colon [0,T] \to \R^d$, $r = 1,\ldots,m$, are
  continuously differentiable, and there exist constants $L \in (0,\infty)$,
$q \in [2,\infty)$
  such that for all $t \in [0,T]$ and
  $x,x_1,x_2 \in \R^d$ the following properties hold
  \begin{align*}
    \big\langle f(t,x_1) - f(t,x_2), x_1-x_2 \big\rangle &\le L | x_1 - x_2
    |^2,\\
    \big| \tfrac{\partial f}{\partial t} (t,x) \big| &\le L
    \big( 1 + |x | \big)^q,\\ 
    \big| \tfrac{\partial f}{\partial x} (t,x_1) - \tfrac{\partial f}{\partial
    x} (t,x_2) \big|_{\mathcal{L}(\R^d)} &\le L \big( 1 +
    |x_1| + |x_2 | \big)^{q-2} | x_1 - x_2 | .
  \end{align*}
\end{assumption}

Under this assumption it directly follows that $g^{r_1,r_2} \equiv 0$ for all
$r_1,r_2 = 1,\ldots,m$ for the coefficient functions
defined in \eqref{eq3:grr} . Consequently, the PMil method and the SSBM scheme
coincide with the PEM method and the SSBE scheme from \cite{beyn2015},
respectively. 

Let us note that Assumption~\ref{as:fg_add} implies the
global coercivity condition \eqref{eq:growthcond} for every $p \in [2,\infty)$.
Consequently, under Assumption~\ref{as:fg_add} the exact solution to
\eqref{sode_add} has finite $p$-th moments for every $p \in [2,\infty)$.
From this and Theorems~\ref{th:PMilconv} and \ref{cor:SSBMconv} we directly
obtain the following convergence result:

\begin{corollary}
  \label{cor:conv_add}
  Let Assumption~\ref{as:fg_add} be satisfied with $L \in (0,
  \infty)$ and $q \in [2,\infty)$. Then it holds that
  \begin{enumerate}
    \item[(i)] the projected Euler-Maruyama method 
      with $\alpha = \frac{1}{2(q-1)}$ and arbitrary upper step size bound
      $\overline{h} \in (0,1]$ is strongly convergent of order $\gamma 
      = 1$.  
    \item[(ii)] the split-step backward Euler method with
      arbitrary upper step size bound $\overline{h} \in (0, L^{-1})$ is
      strongly convergent of order $\gamma = 1$. 
  \end{enumerate}  
\end{corollary}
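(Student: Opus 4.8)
The plan is to derive Corollary~\ref{cor:conv_add} as a more-or-less immediate specialization of Theorems~\ref{th:PMilconv} and \ref{cor:SSBMconv} to the additive-noise setting. First I would verify that Assumption~\ref{as:fg_add} implies Assumption~\ref{as:fg} (for the same $q$ and, after possibly enlarging $L$, the same $L$): the one-sided Lipschitz condition on $f$ together with the fact that the $g^r$ are state-independent gives \eqref{eq3:onesided} with any $\eta > \frac12$, since the diffusion difference term vanishes identically; the bounds \eqref{eq3:loc_lip_f_x} and \eqref{eq3:poly_growth_t} for $f$ are assumed verbatim; and the bounds \eqref{eq3:loc_Lip_g_x} and the $g^r$-part of \eqref{eq3:poly_growth_t} are trivially satisfied because $\frac{\partial g^r}{\partial x} \equiv 0$ and $\frac{\partial g^r}{\partial t}(t)$ is a continuous, hence bounded, function on the compact interval $[0,T]$ (so it is dominated by $L(1+|x|)^{(q+1)/2}$ after enlarging $L$). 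Consequently $g^{r_1,r_2} \equiv 0$ for all $r_1,r_2$, as noted just above the statement, and the projected Milstein and split-step backward Milstein recursions \eqref{eq:PMildef}, \eqref{eq:SSBMdef} collapse to the projected Euler--Maruyama and split-step backward Euler recursions from \cite{beyn2015}.

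Next I would address the moment hypotheses that Theorems~\ref{th:PMilconv} and \ref{cor:SSBMconv} require, namely $\sup_{\tau\in[0,T]}\|X(\tau)\|_{L^{8q-6}(\Omega;\R^d)}<\infty$ and $\sup_{\tau\in[0,T]}\|X(\tau)\|_{L^{6q-4}(\Omega;\R^d)}<\infty$ respectively. As remarked in the paragraph preceding the corollary, Assumption~\ref{as:fg_add} implies the global coercivity condition \eqref{eq:growthcond} for \emph{every} $p\in[2,\infty)$: indeed $\langle f(t,x),x\rangle \le \langle f(t,0),x\rangle + L|x|^2 \le C(1+|x|^2)$ by the one-sided Lipschitz bound and the boundedness of $t\mapsto f(t,0)$, while $\sum_r |g^r(t)|^2$ is bounded on $[0,T]$ and hence absorbed into $C(1+|x|^2)$; the factor $\frac{p-1}{2}$ is harmless since it is a fixed constant for each $p$. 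By the cited moment estimate (e.g.\ \cite[Chap.~2.4]{mao1997}), the exact solution therefore has finite $p$-th moments for every $p\in[2,\infty)$, in particular for $p=8q-6$ and $p=6q-4$. This is the one place where a small argument is genuinely needed rather than pure quotation, but it is entirely routine.

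With these two checks in place, part~(i) follows by applying Theorem~\ref{th:PMilconv} with the verified Assumption~\ref{as:fg} and the verified moment bound, using that the PMil method with $\alpha=\frac{1}{2(q-1)}$ and $\overline{h}\in(0,1]$ equals the projected Euler--Maruyama method; and part~(ii) follows by applying Theorem~\ref{cor:SSBMconv}, where I must additionally check the hypothesis \eqref{eq2:onesidedSSBM}. But since $g^{r_1,r_2}\equiv 0$, the triple-term inequality \eqref{eq2:onesidedSSBM} reduces to $\langle f(t,x_1)-f(t,x_2),x_1-x_2\rangle + \eta_1\cdot 0 + \eta_2\cdot 0 \le L|x_1-x_2|^2$, which is exactly the one-sided Lipschitz condition from Assumption~\ref{as:fg_add}, valid for any choice $\eta_1\in(1,\infty)$, $\eta_2\in(0,\infty)$. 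Moreover $\frac{2\eta_2}{\eta_1}$ can be made arbitrarily large, so the step-size range $\overline{h}\in(0,\max(L^{-1},\frac{2\eta_2}{\eta_1}))$ contains $(0,L^{-1})$; hence Theorem~\ref{cor:SSBMconv} applies with $\overline{h}\in(0,L^{-1})$ and the SSBM method coincides with the split-step backward Euler scheme. I do not expect any genuine obstacle here: the entire content is bookkeeping — confirming that the degenerate diffusion structure makes every Milstein-specific term vanish and that the superlinear-growth hypotheses on $g^r$ are vacuous in the state variable — and the only non-trivial input, the finiteness of arbitrarily high moments, is supplied by the global coercivity condition, which I would state and dispatch in one or two lines.
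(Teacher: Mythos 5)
Your proposal is correct and follows exactly the route the paper takes: observe that Assumption~\ref{as:fg_add} implies Assumption~\ref{as:fg} and the coercivity condition \eqref{eq:growthcond} for every $p$, so all moment hypotheses hold, note that $g^{r_1,r_2}\equiv 0$ makes the Milstein schemes collapse to their Euler counterparts and makes \eqref{eq2:onesidedSSBM} reduce to the one-sided Lipschitz condition, and then quote Theorems~\ref{th:PMilconv} and \ref{cor:SSBMconv}. The paper leaves these verifications implicit ("we directly obtain"), and your write-up simply supplies the same routine details.
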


\section{A reminder on stochastic C-stability and B-consistency}
\label{sec:def}
In this section we give a brief overview of the notions of stochastic
C-stability and B-consistency introduced in \cite{beyn2015}.
We also state the abstract convergence theorem, which, roughly speaking, can be
summarized by 
\begin{align*}
  \text{stoch. C-stability } + \text{ stoch. B-consistency}
  \quad \Rightarrow   \text{ Strong convergence.}
\end{align*}

We first recall some additional notation from \cite{beyn2015}: 
For an arbitrary upper step size bound $\overline{h} \in (0,T]$ we define the
set $\mathbb{T} := \mathbb{T}(\overline{h}) \subset [0,T) \times
(0,\overline{h}]$ to be 
\begin{align*}
  \mathbb{T}(\overline{h})  := \big\{ (t,\delta) \in [0,T) \times
  (0,\overline{h}] \, : \, t+\delta \le T \big\}. 
\end{align*}
Further, for a given vector of step sizes $h \in (0,\overline{h}]^N$, $N \in
\N$, we denote
by $\mathcal{G}^{2}(\mathcal{T}_h)$ the space of all adapted and square
integrable \emph{grid functions}, that is
\begin{align*}
  \mathcal{G}^{2}(\mathcal{T}_h) := \big\{ Z \colon \mathcal{T}_h \times
  \Omega \to \R^d\, : \, Z(t_n) \in L^2(\Omega,\F_{t_n},\P;\R^d)\text{ for all
  } n = 0,1,\ldots,N \big\}.
\end{align*}
The next definition describes the abstract class
of stochastic one-step methods which we consider in this section.

\begin{definition}
  \label{def:onestep}
  Let $\overline{h} \in (0,T]$ be an upper step size bound and $\Psi \colon
  \R^d \times \mathbb{T} \times \Omega \to \R^d$ be a mapping satisfying the
  following measurability and integrability condition: For every 
  $(t,\delta) \in \mathbb{T}$ and $Z \in L^2(\Omega,\F_{t},\P;\R^d)$ it holds
  \begin{align}
    \label{eq:Psicond}
    \Psi(Z,t,\delta) \in L^2(\Omega,\F_{t+\delta},\P;\R^d).
  \end{align}
  Then, for every vector of step sizes $h \in
  (0,\overline{h}]^N$, $N \in \N$, 
  we say that a grid function $X_h \in \mathcal{G}^2(\mathcal{T}_h)$ is
  generated by the \emph{stochastic one-step method} $(\Psi,\overline{h},\xi)$
  with initial condition $\xi \in L^2(\Omega,\F_{0},\P;\R^d)$ if 
  \begin{align}
    \label{eq:onestep2}
    \begin{split}
      X_h(t_i) &= \Psi(X_h(t_{i-1}), t_{i-1}, h_i), \quad 1 \le i \le N,\\
      X_h(t_0) &= \xi.
    \end{split}
  \end{align}
We call $\Psi$ the \emph{one-step map} of the method.
\end{definition}

For the formulation of the next definition we denote by $\E [ Y | \F_t]$ the
conditional expectation of a random variable $Y \in L^1(\Omega;\R^d)$ with
respect to the sigma-field $\F_t$. Note
that if $Y$ is square integrable, then $\E [ Y | \F_t]$ coincides with the
orthogonal projection onto the closed subspace $L^2(\Omega, \F_t, \P; \R^d)$.
By $(\id - \E[ \cdot | \F_t])$ we denote the associated projector onto the
orthogonal complement. 

\begin{definition}
  \label{def:cstab}
  A stochastic one-step method $(\Psi,\overline{h},\xi)$ is called
  \emph{stochastically C-stable} (with respect to the norm in
  $L^2(\Omega;\R^d)$) if there exist a constant $C_{\mathrm{stab}}$ and a
  parameter value $\nu \in (1,\infty)$ such that for all $(t,\delta) \in
  \mathbb{T}$ and all random variables $Y, Z \in L^2(\Omega,\F_{t},\P;\R^d)$
  it holds  
  \begin{align}
    \label{eq:stab_cond1}
    \begin{split}
      &\big\| \E \big[ \Psi(Y,t,\delta) -
      \Psi(Z,t,\delta) | \F_{t} \big]
      \big\|_{L^2(\Omega;\R^d)}^2\\
      &\qquad + \nu \big\| \big( \id - \E [ \, \cdot \, |
      \F_{t} ] \big) \big( \Psi(Y,t,\delta) -
      \Psi(Z,t,\delta) \big) \big\|^2_{L^2(\Omega;\R^d)}\\
      &\quad\le \big(1 + C_{\mathrm{stab}} \delta \big)
      \big\| Y - Z \big\|_{L^2(\Omega;\R^d)}^2.    
    \end{split}
  \end{align}
\end{definition}

A first consequence of the notion of stochastic C-stability is the following
\emph{a priori estimate}: Let $(\Psi,\overline{h},\xi)$ be a  
stochastically C-stable one-step method. If there exists a constant $C_0$
such that for all $(t,\delta) \in \mathbb{T}$ it holds
\begin{align}
  \label{eq3:cond1}
  \big\| \E \big[ \Psi(0, t, \delta) | \F_{t} \big]
  \big\|_{L^2(\Omega;\R^d)} &\le C_0 \delta,\\
  \label{eq3:cond2}
  \big\| \big( \id - \E \big[ \, \cdot \,
  | \F_{t} \big] \big) \Psi(0, t, \delta)
  \big\|_{L^2(\Omega;\R^d)}&\le C_0 \delta^{\frac{1}{2}}, 
\end{align}
then there exists a positive constant $C$ with
\begin{align*}
   &\max_{n \in \{0,\ldots,N\}} \| X_h(t_n)  \|_{L^2(\Omega;\R^d)}
   \le \ee^{C T}
   \Big( \| \xi  \|_{L^2(\Omega;\R^d)}^2 
   + C C_0^2 T \Big)^{\frac{1}{2}},
\end{align*}
for every vector of step sizes $h \in (0,\overline{h}]^N$, $N \in \N$, 
where $X_h$ denotes the grid function generated by $(\Psi,\overline{h},\xi)$
with step sizes $h$. A proof for this result is found in
\cite[Cor.~3.6]{beyn2015}.

\begin{definition}
  \label{def:bcons}
  A stochastic one-step method $(\Psi,\overline{h},\xi)$ is called
  \emph{stochastically B-consistent} of order $\gamma > 0$ to \eqref{sode} if
  there exists a constant $C_{\mathrm{cons}}$  such that for every
  $(t,\delta) \in \mathbb{T}$ it holds
  \begin{align}
    \label{eq:cons_cond1}
    \begin{split}
      \big\| \E \big[ X(t+\delta) - \Psi(X(t),t,\delta) | \F_{t} \big] 
      \big\|_{L^2(\Omega;\R^d)} \le C_{\mathrm{cons}} \delta^{ \gamma + 1}
    \end{split}
  \end{align}
  and
  \begin{align}
    \label{eq:cons_cond2}
    \begin{split}
      \big\| \big( \id - \E [ \, \cdot \, | \F_{t} ] \big)
      \big( X(t + \delta) - \Psi(X(t),t,\delta) \big)
      \big\|_{L^2(\Omega;\R^d)} \le C_{\mathrm{cons}} \delta^{ \gamma +
      \frac{1}{2}},
    \end{split}
  \end{align}
  where $X \colon [0,T] \times \Omega \to \R^d$ denotes the exact solution to
  \eqref{sode}.
\end{definition}

Finally, it remains to give our definition of strong convergence.

\begin{definition}
  \label{def:conv}
  A stochastic one-step method $(\Psi,\overline{h},\xi)$
  \emph{converges} \emph{strongly} with order $\gamma > 0$ to the exact
  solution of \eqref{sode} if there exists a constant $C$ such that for every
  vector of step sizes $h \in (0,\overline{h}]^N$, $N \in \N$, it holds
  \begin{align*}
    \max_{n \in \{0,\ldots,N\}} \big\| X_h(t_n) - X(t_n)
    \big\|_{L^{2}(\Omega;\R^d)} \le C |h|^{\gamma}.
  \end{align*}
  Here $X$ denotes the exact solution to \eqref{sode} and $X_h \in
  \mathcal{G}^2(\mathcal{T}_h)$ is the grid function generated by
  $(\Psi,\overline{h},\xi)$ with step sizes $h \in (0,\overline{h}]^N$. 
\end{definition}

We close this section with the following abstract convergence theorem, which is
proved in \cite[Theorem~3.7]{beyn2015}. 

\begin{theorem}
  \label{th:Bconv}
  Let the stochastic one-step method $(\Psi,\overline{h},\xi)$ be
  stochastically C-stable and stochastically B-consistent of order $\gamma >
  0$. If $\xi = X_0$, then there exists a constant $C$ depending on
  $C_{\mathrm{stab}}$, $C_\mathrm{cons}$, $T$, $\overline{h}$, and $\nu$
  such that for every vector of step sizes $h \in (0,\overline{h}]^N$, $N \in
  \N$, it holds 
  \begin{align*}
    \max_{n \in \{0,\ldots,N\}} \big\| X(t_n) - X_h(t_n)
    \big\|_{L^2(\Omega;\R^d)} \le C |h|^{\gamma},  
  \end{align*}
  where $X$ denotes the exact solution to \eqref{sode} and $X_h$ the grid
  function generated by $(\Psi,\overline{h},\xi)$ with step sizes $h$.  In
  particular, $(\Psi,\overline{h},\xi)$ is strongly convergent of order
  $\gamma$.
\end{theorem}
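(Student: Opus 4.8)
\emph{Proof sketch.} The plan is to run a discrete Gronwall (``Lady Windermere's fan'') argument in the $L^2(\Omega;\R^d)$-norm, combining the one-step consistency bounds of Definition~\ref{def:bcons} with the stability estimate of Definition~\ref{def:cstab}, and exploiting the orthogonality in $L^2(\Omega;\R^d)$ that is induced by conditional expectation.

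Fix a vector of step sizes $h \in (0,\overline{h}]^N$ and abbreviate the error by $e_n := X(t_n) - X_h(t_n)$ for $n = 0,\ldots,N$. Using $X_h(t_n) = \Psi(X_h(t_{n-1}),t_{n-1},h_n)$ from \eqref{eq:onestep2}, I would write the one-step error as
\begin{align*}
  e_n = \big[ X(t_n) - \Psi(X(t_{n-1}),t_{n-1},h_n) \big] + \big[ \Psi(X(t_{n-1}),t_{n-1},h_n) - \Psi(X_h(t_{n-1}),t_{n-1},h_n) \big],
\end{align*}
and split each bracket into its $\F_{t_{n-1}}$-conditional expectation and its orthogonal complement, denoting the first bracket by $A_n + B_n$ and the second by $P_n + Q_n$, where $A_n, P_n$ are $\F_{t_{n-1}}$-measurable and $\E[B_n | \F_{t_{n-1}}] = \E[Q_n | \F_{t_{n-1}}] = 0$. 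Then \eqref{eq:cons_cond1}--\eqref{eq:cons_cond2}, applied to the exact solution at time $t_{n-1}$ with step $h_n$, give $\| A_n \|_{L^2(\Omega;\R^d)} \le C_{\mathrm{cons}} h_n^{\gamma+1}$ and $\| B_n \|_{L^2(\Omega;\R^d)} \le C_{\mathrm{cons}} h_n^{\gamma+\frac{1}{2}}$, while \eqref{eq:stab_cond1} with $Y = X(t_{n-1})$, $Z = X_h(t_{n-1})$ yields $\| P_n \|_{L^2(\Omega;\R^d)}^2 + \nu \| Q_n \|_{L^2(\Omega;\R^d)}^2 \le (1 + C_{\mathrm{stab}} h_n) \| e_{n-1} \|_{L^2(\Omega;\R^d)}^2$. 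Since $A_n + P_n$ is $\F_{t_{n-1}}$-measurable and $B_n + Q_n$ has vanishing conditional expectation given $\F_{t_{n-1}}$, the Pythagoras identity gives $\| e_n \|_{L^2(\Omega;\R^d)}^2 = \| A_n + P_n \|_{L^2(\Omega;\R^d)}^2 + \| B_n + Q_n \|_{L^2(\Omega;\R^d)}^2$. I would then apply Young's inequality to the first summand with the step-size--dependent weight $h_n$ and to the second with a \emph{fixed} weight $\varepsilon \in (0,\nu-1)$; because $1 + \varepsilon < \nu \le (1+h_n)\nu$, the ``propagated'' contributions combine into $(1+h_n)(\| P_n \|_{L^2(\Omega;\R^d)}^2 + \nu\| Q_n \|_{L^2(\Omega;\R^d)}^2) \le (1 + C_1 h_n) \| e_{n-1} \|_{L^2(\Omega;\R^d)}^2$ with $C_1 = C_1(C_{\mathrm{stab}},\overline{h})$, while the local-error contributions, using $h_n \le \overline{h}$, are bounded by $C_2 h_n^{2\gamma+1}$ with $C_2 = C_2(C_{\mathrm{cons}},\nu,\overline{h})$. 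This produces the recursion $\| e_n \|_{L^2(\Omega;\R^d)}^2 \le (1 + C_1 h_n)\| e_{n-1}\|_{L^2(\Omega;\R^d)}^2 + C_2 h_n^{2\gamma+1}$.

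Since $\xi = X_0$, we have $e_0 = X_0 - \xi = 0$, so iterating the recursion and using $1 + x \le \ee^x$ together with $\sum_{k=1}^N h_k^{2\gamma+1} \le |h|^{2\gamma} \sum_{k=1}^N h_k = T|h|^{2\gamma}$ gives $\| e_n \|_{L^2(\Omega;\R^d)}^2 \le \ee^{C_1 T} C_2 T |h|^{2\gamma}$ for all $n$, whence the claim with $C = (\ee^{C_1 T} C_2 T)^{1/2}$, depending only on $C_{\mathrm{stab}}$, $C_{\mathrm{cons}}$, $T$, $\overline{h}$, and $\nu$.

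The only genuinely delicate point — and the reason Definition~\ref{def:cstab} carries the weight $\nu > 1$ — is the \emph{fixed}-weight Young splitting of the orthogonal component: if one instead used a step-size--dependent weight there, the orthogonal local error would enter as $h_n^{2\gamma}$ rather than $h_n^{2\gamma+1}$, and after summation one would lose half an order and recover only strong convergence of order $\gamma - \tfrac12$. Everything else (the moment bounds hidden in the C-stability and B-consistency constants, the elementary manipulations of $h_n \le \overline{h}$) is routine. \qed
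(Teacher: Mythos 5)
Your proposal is correct and is essentially the argument the paper relies on: Theorem~\ref{th:Bconv} is proved by citing \cite[Theorem~3.7]{beyn2015}, and that proof uses exactly your decomposition of the one-step error into consistency and stability parts, the $\F_{t_{n-1}}$-conditional Pythagoras identity, the Young splitting with a fixed weight absorbed by $\nu>1$ on the orthogonal component versus an $h_n$-dependent weight on the adapted component, and a discrete Gronwall iteration. Your closing remark on why the fixed-weight splitting is the point of the parameter $\nu$ is also the correct explanation.
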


\section{C-stability of the projected Milstein method}
\label{sec:PMilstab}
In this section we prove that the projected Milstein (PMil) method defined in
\eqref{eq:PMildef} is stochastically C-stable.  

Throughout this section we assume that Assumption~\ref{as:fg} is satisfied with
growth rate $q \in [2,\infty)$. First, we choose an arbitrary upper step size
bound $\overline{h} \in (0, 1]$ and a parameter value $\alpha \in (0,\infty)$. 
Later it will turn out to be optimal to set $\alpha = \frac{1}{2(q-1)}$ in
dependence of the growth $q$ in Assumption~\ref{as:fg}.

For the definition of the one-step map of the PMil method it is convenient to
introduce the following short hand notation: For every $\delta \in
(0,\overline{h}]$, we denote the projection of $x \in \R^d$ onto the ball of
radius $\delta^{-\alpha}$ by  
\begin{align}
  \label{eq:circ}
  x^\circ := \min(1, \delta^{-\alpha} |x|^{-1}) x.
\end{align}
Then, the one-step map $\Psi^{\mathrm{PMil}} 
\colon \R^d \times \mathbb{T} \times \Omega \to \R^d$ is given by
\begin{align}
  \label{eq:PsiPMil}
  \begin{split}
    \Psi^{\mathrm{PMil}}(x,t,\delta) &:= x^\circ 
    + \delta f(t, x^\circ ) + \sum_{r = 1}^m g^r(t, x^\circ)
    I_{(r)}^{t,t+\delta} + \sum_{r_1,r_2 = 1}^m g^{r_1,r_2}(t,x^\circ)
    I_{(r_2,r_1)}^{t,t+\delta}    
  \end{split}
\end{align}
for every $x \in \R^d$ and $(t,\delta) \in \mathbb{T}$. Recall
\eqref{eq2:stochincr1} and \eqref{eq2:stochincr2} for the definition of the
stochastic increments. 

First, we check that the PMil method is a stochastic
one-step method in the sense of Definition~\ref{def:onestep}. At the same time
we verify that the one step map satisfies conditions \eqref{eq3:cond1} and
\eqref{eq3:cond2}.  

\begin{prop}  
  \label{prop:PMil}
  Let the functions $f$ and $g^r$, $r = 1,\ldots,m$, satisfy
  Assumption~\ref{as:fg} with $L \in (0,\infty)$ and let $\overline{h} \in
  (0,1]$. For every initial value $\xi \in L^2(\Omega;\F_{0},\P;\R^d)$ and for
  every $\alpha \in (0,\infty)$ it holds that $(\Psi^{\mathrm{PMil}},
  \overline{h}, \xi)$ is a stochastic one-step method.  
  
  In addition, there exists a constant $C_0$ only depending on $L$ and
  $m$ such that 
  \begin{align}
    \label{eq:PMilzero1}
    \big\| \E \big[ \Psi^{\mathrm{PMil}}( 0, t,\delta) | \F_{t} \big]
    \big\|_{L^2(\Omega;\R^d)} &\le C_0 \delta,\\
    \label{eq:PMilzero2}
    \big\| \big( \id - \E [ \, \cdot\, | \F_{t} ] \big)
    \Psi^{\mathrm{PMil}}( 0, t,\delta) \big\|_{L^2(\Omega;\R^d)} & \le C_0
    \delta^{\frac{1}{2}}    
  \end{align}
  for all $(t,\delta) \in \mathbb{T}$.
\end{prop}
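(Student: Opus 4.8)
The statement has two parts: (i) that $(\Psi^{\mathrm{PMil}},\overline{h},\xi)$ is a stochastic one-step method, and (ii) the two estimates \eqref{eq:PMilzero1}–\eqref{eq:PMilzero2} at the origin. For (i) I would verify the measurability/integrability condition \eqref{eq:Psicond}: given $(t,\delta)\in\mathbb{T}$ and $Z\in L^2(\Omega,\F_t,\P;\R^d)$, the projection $Z^\circ$ is again $\F_t$-measurable (it is a continuous, in fact $1$-Lipschitz, deterministic function of $Z$) and moreover $|Z^\circ|\le\delta^{-\alpha}$ is bounded, hence in every $L^p$. Then $f(t,Z^\circ)$, $g^r(t,Z^\circ)$ and $g^{r_1,r_2}(t,Z^\circ)$ are $\F_t$-measurable and bounded by the polynomial growth bounds \eqref{eq3:poly_growth}, \eqref{eq3:poly_growth_g}, \eqref{eq3:grr_poly_growth} (applied at the bounded argument $Z^\circ$), so they lie in $L^2$. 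The stochastic increments $I_{(r)}^{t,t+\delta}$ and $I_{(r_1,r_2)}^{t,t+\delta}$ are $\F_{t+\delta}$-measurable with all moments finite and are independent of $\F_t$; multiplying an $\F_t$-measurable $L^2$ random variable by an independent $L^2$ increment gives an $\F_{t+\delta}$-measurable $L^2$ random variable (the $g^{r_1,r_2}$ term needs $I_{(r_2,r_1)}^{t,t+\delta}\in L^2$, which holds). Summing finitely many such terms and $Z^\circ\in L^2$ yields $\Psi^{\mathrm{PMil}}(Z,t,\delta)\in L^2(\Omega,\F_{t+\delta},\P;\R^d)$, which is exactly \eqref{eq:Psicond}.

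For (ii) I would specialize to $x=0$. Then $0^\circ=0$, so
\begin{align*}
  \Psi^{\mathrm{PMil}}(0,t,\delta)
  = \delta f(t,0) + \sum_{r=1}^m g^r(t,0)\, I_{(r)}^{t,t+\delta}
  + \sum_{r_1,r_2=1}^m g^{r_1,r_2}(t,0)\, I_{(r_2,r_1)}^{t,t+\delta}.
\end{align*}
The deterministic term $\delta f(t,0)$ is $\F_t$-measurable, and the first Itô integral term $g^r(t,0)I_{(r)}^{t,t+\delta}$ has conditional expectation zero given $\F_t$ since $I_{(r)}^{t,t+\delta}$ is mean-zero and independent of $\F_t$. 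For the double-integral term, $\E[I_{(r_2,r_1)}^{t,t+\delta}\mid\F_t]=\E[I_{(r_2,r_1)}^{t,t+\delta}]$, which is $0$ when $r_1\ne r_2$ and equals $-\tfrac{\delta}{2}$ (or $+\tfrac{\delta}{2}$, depending on the ordering convention in \eqref{eq2:stochincr2}) when $r_1=r_2$; in any case its absolute value is $\tfrac{\delta}{2}$. Hence
\begin{align*}
  \E\big[\Psi^{\mathrm{PMil}}(0,t,\delta)\mid\F_t\big]
  = \delta f(t,0) \pm \tfrac{\delta}{2}\sum_{r=1}^m g^{r,r}(t,0),
\end{align*}
and taking $L^2(\Omega;\R^d)$-norms and using $|f(t,0)|\le L$ and $|g^{r,r}(t,0)|\le L$ (from \eqref{eq3:poly_growth}, \eqref{eq3:grr_poly_growth} with $x=0$) gives \eqref{eq:PMilzero1} with $C_0$ depending only on $L$ and $m$.

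For \eqref{eq:PMilzero2}, write $\Psi^{\mathrm{PMil}}(0,t,\delta)-\E[\Psi^{\mathrm{PMil}}(0,t,\delta)\mid\F_t]$; the deterministic drift term cancels, and what remains is $\sum_r g^r(t,0)I_{(r)}^{t,t+\delta}+\sum_{r_1,r_2}g^{r_1,r_2}(t,0)(I_{(r_2,r_1)}^{t,t+\delta}-\E I_{(r_2,r_1)}^{t,t+\delta})$. I would bound its $L^2$-norm by the triangle inequality, using independence of the increments from $\F_t$ to factor $\|g^r(t,0)I_{(r)}^{t,t+\delta}\|_{L^2}=|g^r(t,0)|\,\|I_{(r)}^{t,t+\delta}\|_{L^2}$ and likewise for the double integrals, together with the classical moment bounds $\|I_{(r)}^{t,t+\delta}\|_{L^2}=\delta^{1/2}$ and $\|I_{(r_2,r_1)}^{t,t+\delta}\|_{L^2}\le c\,\delta$ (so certainly $\le c\,\delta^{1/2}$ for $\delta\le\overline h\le 1$), and the bounds $|g^r(t,0)|\le L$, $|g^{r_1,r_2}(t,0)|\le L$. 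This produces a bound of the form $C_0\delta^{1/2}$ with $C_0$ depending only on $L$ and $m$, establishing \eqref{eq:PMilzero2}. None of this is genuinely hard; the only point requiring a little care is bookkeeping the sign/value of $\E I_{(r_1,r_2)}^{t,t+\delta}$ and making sure the constant $C_0$ is kept independent of $\delta$, $t$, and the step-size vector — which is automatic here because everything is evaluated at the fixed point $0$, so no moments of the numerical solution enter.
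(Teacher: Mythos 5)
Your overall strategy coincides with the paper's: for the one-step property you use the boundedness of $Z^\circ$ (hence $Z^\circ\in L^\infty(\Omega,\F_t,\P;\R^d)$) together with the polynomial growth bounds \eqref{eq3:poly_growth}, \eqref{eq3:poly_growth_g}, \eqref{eq3:grr_poly_growth} and independence of the increments; for the estimates at $0$ you split into the $\F_t$-measurable part and the fluctuating part and use the moment bounds of the increments. That is exactly what the paper does.

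There is, however, one genuine error in your computation of the conditional expectation. You claim that $\E\big[I_{(r_2,r_1)}^{t,t+\delta}\big]$ equals $\pm\tfrac{\delta}{2}$ when $r_1=r_2$. With the definition \eqref{eq2:stochincr2} these are iterated \emph{It\=o} integrals, i.e.\ $I_{(r,r)}^{t,t+\delta}=\int_t^{t+\delta}\big(W^r(\tau)-W^r(t)\big)\diff{W^r(\tau)}=\tfrac12\big((\Delta W^r)^2-\delta\big)$, which has mean zero; the value $\tfrac{\delta}{2}$ would arise for the Stratonovich convention or for $\tfrac12(\Delta W^r)^2$ without the $-\delta$ correction. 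Consequently
\begin{align*}
  \E\big[\Psi^{\mathrm{PMil}}(0,t,\delta)\mid\F_t\big]=\delta f(t,0),
\end{align*}
with no contribution from the diagonal double integrals, and the entire stochastic part belongs to the range of $\id-\E[\,\cdot\,|\F_t]$. Your error happens to be harmless for the stated conclusions: the phantom term $\tfrac{\delta}{2}\sum_r g^{r,r}(t,0)$ is of size $O(\delta)$ and would still be absorbed into $C_0\delta$ in \eqref{eq:PMilzero1}, and centering by a (wrong) constant does not increase the $L^2$-norm in \eqref{eq:PMilzero2}. Still, the identity you assert is false and should be corrected; the paper's cleaner route for \eqref{eq:PMilzero2} is to use that all increments $I_{(r)}^{t,t+\delta}$ and $I_{(r_1,r_2)}^{t,t+\delta}$ are mean-zero and pairwise uncorrelated, which gives the exact identity
\begin{align*}
  \big\|(\id-\E[\,\cdot\,|\F_t])\Psi^{\mathrm{PMil}}(0,t,\delta)\big\|_{L^2(\Omega;\R^d)}^2
  =\delta\sum_{r=1}^m|g^r(t,0)|^2+\frac{\delta^2}{2}\sum_{r_1,r_2=1}^m|g^{r_1,r_2}(t,0)|^2
\end{align*}
rather than a triangle-inequality bound.
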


\begin{proof}
  We first verify that $\Psi^{\mathrm{PMil}}$ satisfies \eqref{eq:Psicond}. For
  this let us fix arbitrary $(t, \delta) \in \mathbb{T}$ and $Z \in
  L^2(\Omega,\F_t,\P;\R^d)$. Then, the continuity and boundedness of the
  mapping $\R^d \ni x \mapsto x^\circ = \min(1, \delta^{-\alpha} |x|^{-1} ) x
  \in \R^d$ yields 
  \begin{align*}
    Z^\circ \in L^\infty(\Omega,\F_t,\P;\R^d).
  \end{align*}
  Consequently, by the smoothness of the coefficient functions and by
  \eqref{eq3:poly_growth}, \eqref{eq3:poly_growth_g}, and
  \eqref{eq3:grr_poly_growth} it follows that 
  \begin{align*}
    f(t,Z^\circ),\, g^{r_1}(t,Z^\circ),\, g^{r_1,r_2}(t,Z^\circ) \in
    L^\infty(\Omega,\F_t,\P;\R^d) 
  \end{align*}
  for every $r_1,r_2 = 1,\ldots,m$. Therefore, $\Psi^{\mathrm{PMil}}(Z,t,\delta)
  \colon \Omega \to \R^d$ is an $\F_{t+\delta} / \B(\R^d)$-measurable random
  variable satisfying condition \eqref{eq:Psicond}.

  It remains to show \eqref{eq:PMilzero1} and \eqref{eq:PMilzero2}.
  From \eqref{eq3:poly_growth} we get immediately that
  \begin{align*}
    \big\| \E \big[ \Psi^{\mathrm{PMil}}( 0, t,\delta) | \F_{t} \big]
    \big\|_{L^2(\Omega;\R^d)} =
    \big| \delta f(t, 0) \big| \le  L \delta.
  \end{align*}
  Next, recall that the stochastic increments $(I_{(r)}^{t,t+\delta})_{r = 1}^m$
  and $(I_{(r_1,r_2)}^{t,t+\delta})_{r_1,r_2 = 1}^m$ are pairwise uncorrelated.
  Therefore, we obtain that
  \begin{align*}
    &\big\| \big( \id - \E [ \, \cdot\, | \F_{t} ] \big)
    \Psi^{\mathrm{PMil}}( 0, t,\delta) \big\|_{L^2(\Omega;\R^d)}^2\\
    &\quad = \Big\| \sum_{r = 1}^m g^r(t, 0 ) I_{(r)}^{t,t+\delta}
    + \sum_{r_1,r_2 = 1}^m g^{r_1,r_2}(t, 0 ) I_{(r_1,r_2)}^{t,t+\delta}
    \Big\|^2_{L^2(\Omega;\R^d)}\\
    &\quad = \delta \sum_{r = 1}^m \big| g^r(t, 0 ) \big|^2 + \frac{\delta^2}{2}
    \sum_{r_1,r_2 = 1}^m \big| g^{r_1,r_2}(t, 0 ) \big|^2 
    \le L^2 m \delta + \frac{1}{2}L^2 m^2 \delta^2,
  \end{align*}
  where the last step follows from \eqref{eq3:poly_growth_g} and
  \eqref{eq3:grr_poly_growth}. Since $\delta \le \overline{h} \le 1$ this
  verifies \eqref{eq:PMilzero2}.  
\end{proof}

The next result is concerned with the projection onto the ball of radius
$\delta^{-\alpha}$. The proof is found in \cite[Lem.~6.2]{beyn2015}.

\begin{lemma}
  \label{lem:circ}
  For every $\alpha \in (0,\infty)$ and $\delta \in (0,1]$ the
  mapping $\R^d \ni x \mapsto x^\circ \in \R^d$ defined in \eqref{eq:circ} is
  globally Lipschitz continuous with Lipschitz constant $1$. In particular, it
  holds 
  \begin{align}
    \label{eq:PMilLip}
    \big| x^\circ_1 - x^\circ_2 \big| \le \big| x_1- x_2 \big|
  \end{align}
  for all $x_1, x_2 \in \R^d$.
\end{lemma}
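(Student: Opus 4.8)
The plan is to recognise $x^\circ$ as the metric projection of $x$ onto the closed ball $B_R := \{ y \in \R^d : |y| \le R \}$ of radius $R := \delta^{-\alpha} \in (0,\infty)$, and then to invoke the standard fact that metric projections onto closed convex sets are non-expansive. The whole argument is elementary; the only point requiring a line of justification is the identification of $x^\circ$ with the projection.

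First I would check that $x^\circ = \operatorname{argmin}_{y \in B_R} |y - x|$. If $|x| \le R$ this is immediate, since then $x \in B_R$ and $x^\circ = x$. If $|x| > R$, then $x^\circ = R x / |x| \in B_R$ and for any $y \in B_R$ one has $|x - y| \ge |x| - |y| \ge |x| - R = |x - x^\circ|$, so $x^\circ$ is the (unique) minimiser; the convention $0^\circ = 0$ covers the degenerate case $x = 0$. From the minimality and the convexity of $B_R$ one obtains the variational inequality
\[
  \langle x - x^\circ,\, z - x^\circ \rangle \le 0 \qquad \text{for all } z \in B_R ,
\]
by differentiating $t \mapsto |x - (x^\circ + t(z - x^\circ))|^2$ at $t = 0^+$ along the segment joining $x^\circ$ to an arbitrary $z \in B_R$.

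Next, given $x_1, x_2 \in \R^d$, I would apply this inequality twice, once with $(x, x^\circ, z) = (x_1, x_1^\circ, x_2^\circ)$ and once with $(x, x^\circ, z) = (x_2, x_2^\circ, x_1^\circ)$, and add the two. After rearranging, the mixed terms combine to
\[
  |x_1^\circ - x_2^\circ|^2 \le \langle x_1 - x_2,\, x_1^\circ - x_2^\circ \rangle ,
\]
and the Cauchy–Schwarz inequality then gives $|x_1^\circ - x_2^\circ| \le |x_1 - x_2|$, which is exactly \eqref{eq:PMilLip} (the step being vacuous when $x_1^\circ = x_2^\circ$).

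I do not expect any genuine obstacle. Should one prefer to avoid convex analysis altogether, the same conclusion follows by splitting into the three cases $|x_1|, |x_2| \le R$; $|x_1| \le R < |x_2|$; and $|x_1|, |x_2| > R$, expanding $|x_1^\circ - x_2^\circ|^2$ and $|x_1 - x_2|^2$ and comparing termwise — in the second case using $\langle x_1, x_2^\circ \rangle \le |x_1|\, R \le R^2$, and in the third case reducing, after minimising over the angle between $x_1$ and $x_2$, to the trivial inequality $(|x_1| - |x_2|)^2 \ge 0$. Either route is routine, which is why the cited reference \cite[Lem.~6.2]{beyn2015} can be invoked without repeating the details.
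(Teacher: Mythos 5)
Your proof is correct. The paper itself gives no argument for this lemma and simply cites \cite[Lem.~6.2]{beyn2015}; your identification of $x^\circ$ as the metric projection onto the closed ball of radius $\delta^{-\alpha}$, followed by the standard variational-inequality argument for non-expansiveness of projections onto closed convex sets, is a complete and correct self-contained proof (and the elementary case-by-case alternative you sketch also checks out).
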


The following inequality \eqref{eq:PMilcstab1} follows from the global
monotonicity condition \eqref{eq3:onesided} and plays an import role in the 
stability analysis of the PMil method. The proof is given in
\cite[Lem.~6.3]{beyn2015}. 

\begin{lemma}
  \label{lem:PMil}
  Let the functions $f$ and $g^r$, $r
  = 1,\ldots,m$, satisfy Assumption~\ref{as:fg} with $L \in
  (0,\infty)$, $q \in [2,\infty)$, and $\eta \in (\frac{1}{2},\infty)$. 
  Consider the mapping $\R^d \ni x \mapsto x^\circ \in \R^d$ defined in
  \eqref{eq:circ} with $\alpha \in (0, \frac{1}{2(q-1)}]$ and $\delta \in
  (0,1]$. Then there exists a constant $C$ only depending on $L$ such that
  \begin{align}
    \label{eq:PMilcstab1}
    \begin{split}
      & \big| x^\circ_1 - x_2^\circ + \delta ( f(t,x_1^\circ) - f(t,x_2^\circ))
      \big|^2 + 2 \eta \delta \sum_{r = 1}^m \big| g^r(t,x_1^\circ) -
      g^r(t,x_2^\circ)) \big|^2\\
      &\qquad \le (1 + C \delta) | x_1 - x_2 |^2
    \end{split}
  \end{align}
  for all $x_1, x_2 \in \R^d$.
\end{lemma}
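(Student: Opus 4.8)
The plan is to expand the square on the left-hand side of \eqref{eq:PMilcstab1}, separate the dangerous cross term, and absorb everything using the global monotonicity condition \eqref{eq3:onesided} applied at the projected points $x_1^\circ, x_2^\circ$ together with the polynomial Lipschitz bound \eqref{eq3:loc_Lip} for $f$ and the projection estimate \eqref{eq:PMilLip}. Writing $\Delta_f := f(t,x_1^\circ) - f(t,x_2^\circ)$ and $\Delta_x := x_1^\circ - x_2^\circ$, expanding gives
\begin{align*}
  |\Delta_x + \delta \Delta_f|^2 + 2\eta\delta \sum_{r=1}^m |g^r(t,x_1^\circ) - g^r(t,x_2^\circ)|^2
  = |\Delta_x|^2 + 2\delta\Big( \langle \Delta_f, \Delta_x\rangle + \eta \sum_{r=1}^m |g^r(t,x_1^\circ)-g^r(t,x_2^\circ)|^2\Big) + \delta^2 |\Delta_f|^2.
\end{align*}
The middle bracket is exactly the quantity controlled by \eqref{eq3:onesided}, so it is bounded by $L|\Delta_x|^2$. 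Combined with $|\Delta_x| \le |x_1 - x_2|$ from Lemma~\ref{lem:circ}, the first two terms contribute at most $(1 + 2L\delta)|x_1-x_2|^2$.

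The remaining term $\delta^2 |\Delta_f|^2$ is where the projection radius enters. By \eqref{eq3:loc_Lip} we have $|\Delta_f| \le L(1 + |x_1^\circ| + |x_2^\circ|)^{q-1}|\Delta_x|$, and since both $|x_i^\circ| \le \delta^{-\alpha}$ by construction \eqref{eq:circ}, we get $|\Delta_f| \le L(1 + 2\delta^{-\alpha})^{q-1}|\Delta_x| \le C \delta^{-\alpha(q-1)}|\Delta_x|$ for a constant depending only on $L$ (using $\delta \le 1$). Hence $\delta^2|\Delta_f|^2 \le C\delta^{2 - 2\alpha(q-1)}|\Delta_x|^2$. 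The choice $\alpha \le \frac{1}{2(q-1)}$ guarantees $2 - 2\alpha(q-1) \ge 1$, so with $\delta \le 1$ this term is bounded by $C\delta |x_1-x_2|^2$. Summing up yields $(1 + C\delta)|x_1-x_2|^2$ with a new constant $C$ depending only on $L$ (and implicitly on $q$ through the estimates), which is \eqref{eq:PMilcstab1}.

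The only genuinely delicate point is the balancing of the step-size powers in the $\delta^2|\Delta_f|^2$ term: it is precisely here that the expanding projection ball must not grow too fast, and the constraint $\alpha \le \frac{1}{2(q-1)}$ is exactly what makes the surplus power of $\delta$ nonnegative. Everything else is a routine expansion plus an application of \eqref{eq3:onesided} and Lemma~\ref{lem:circ}; the $q$-dependence of the intermediate constants is harmless since it is fixed, and the final constant can be stated as depending only on $L$ once $q$ (hence $\alpha$) is fixed, as in the statement. I would present the computation as a single displayed chain of inequalities to keep it compact.
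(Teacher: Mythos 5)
Your proof is correct and follows the standard route: expand the square, absorb the cross term together with the diffusion term via the global monotonicity condition \eqref{eq3:onesided} evaluated at the projected points, and control the leftover $\delta^2|\Delta_f|^2$ by combining the local Lipschitz bound \eqref{eq3:loc_Lip} with $|x_i^\circ|\le\delta^{-\alpha}$ and the constraint $\alpha\le\frac{1}{2(q-1)}$. The paper defers this proof to \cite[Lem.~6.3]{beyn2015}, which argues in essentially the same way (the same $\delta$-power balancing also reappears in the proof of Theorem~\ref{th:PMilstab} for the $g^{r_1,r_2}$ terms), so there is nothing to add beyond your own observation that the constant also depends on $q$ through $3^{2(q-1)}$.
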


The next theorem verifies that the PMil method is stochastically C-stable.

\begin{theorem}
  \label{th:PMilstab}
  Let the functions $f$ and $g^r$, $r = 1,\ldots,m$, satisfy
  Assumption~\ref{as:fg} with $L \in (0,\infty)$, $q \in [2,\infty)$, and
  $\eta \in (\frac{1}{2},\infty)$. Further, let 
  $\overline{h} \in (0, 1]$. Then, for every $\xi \in
  L^2(\Omega,\F_0,\P;\R^d)$ the projected Milstein method
  $(\Psi^{\mathrm{PMil}},\overline{h},\xi)$ with $\alpha = \frac{1}{2(q-1)}$ is
  stochastically C-stable.
\end{theorem}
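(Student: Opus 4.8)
The plan is to verify the C-stability inequality \eqref{eq:stab_cond1} directly from the definition of the one-step map $\Psi^{\mathrm{PMil}}$ in \eqref{eq:PsiPMil}. Fix $(t,\delta)\in\mathbb{T}$ and $Y,Z\in L^2(\Omega,\F_t,\P;\R^d)$, and abbreviate $Y^\circ, Z^\circ$ for their projections as in \eqref{eq:circ}. Since $f(t,Y^\circ), g^r(t,Y^\circ), g^{r_1,r_2}(t,Y^\circ)$ are all $\F_t$-measurable, and the stochastic increments $I_{(r)}^{t,t+\delta}$ and $I_{(r_2,r_1)}^{t,t+\delta}$ are independent of $\F_t$ with mean zero, the conditional expectation $\E[\Psi^{\mathrm{PMil}}(Y,t,\delta)-\Psi^{\mathrm{PMil}}(Z,t,\delta)\mid\F_t]$ collapses to the drift part $Y^\circ - Z^\circ + \delta(f(t,Y^\circ)-f(t,Z^\circ))$, while the orthogonal complement $(\id-\E[\cdot\mid\F_t])(\cdots)$ is exactly the sum of the diffusion and the Milstein correction terms, namely $\sum_r (g^r(t,Y^\circ)-g^r(t,Z^\circ))I_{(r)}^{t,t+\delta} + \sum_{r_1,r_2}(g^{r_1,r_2}(t,Y^\circ)-g^{r_1,r_2}(t,Z^\circ))I_{(r_2,r_1)}^{t,t+\delta}$.

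Next I would compute the $L^2$-norm of the orthogonal complement term. Using that the stochastic increments $\{I_{(r)}^{t,t+\delta}\}_r \cup \{I_{(r_1,r_2)}^{t,t+\delta}\}_{r_1,r_2}$ are pairwise uncorrelated and independent of $\F_t$, with $\E|I_{(r)}^{t,t+\delta}|^2 = \delta$ and $\E|I_{(r_1,r_2)}^{t,t+\delta}|^2 = \tfrac{\delta^2}{2}$, the cross terms vanish and we get
\begin{align*}
  &\big\| (\id-\E[\cdot\mid\F_t])(\Psi^{\mathrm{PMil}}(Y,t,\delta)-\Psi^{\mathrm{PMil}}(Z,t,\delta)) \big\|_{L^2}^2 \\
  &\quad = \delta \sum_{r=1}^m \big\| g^r(t,Y^\circ)-g^r(t,Z^\circ) \big\|_{L^2}^2 + \frac{\delta^2}{2} \sum_{r_1,r_2=1}^m \big\| g^{r_1,r_2}(t,Y^\circ)-g^{r_1,r_2}(t,Z^\circ) \big\|_{L^2}^2.
\end{align*}
Then I would assemble the left-hand side of \eqref{eq:stab_cond1} with a still-to-be-chosen $\nu>1$. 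The first two contributions — the squared drift term plus $\nu\delta\sum_r \|g^r(t,Y^\circ)-g^r(t,Z^\circ)\|^2$ — should be bounded using Lemma~\ref{lem:PMil}, which (with $\alpha=\tfrac{1}{2(q-1)}$ and $\delta\le 1$) gives pointwise $|Y^\circ-Z^\circ+\delta(f(t,Y^\circ)-f(t,Z^\circ))|^2 + 2\eta\delta\sum_r|g^r(t,Y^\circ)-g^r(t,Z^\circ)|^2 \le (1+C\delta)|Y-Z|^2$; since $\eta>\tfrac12$, one can pick $\nu$ with $1<\nu\le 2\eta$ so this part is dominated by $(1+C\delta)\|Y-Z\|_{L^2}^2$ after taking expectations and using \eqref{eq:PMilLip}.

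The remaining piece is the Milstein correction term $\tfrac{\nu\delta^2}{2}\sum_{r_1,r_2}\|g^{r_1,r_2}(t,Y^\circ)-g^{r_1,r_2}(t,Z^\circ)\|^2$, and controlling it is the main obstacle — this is precisely where the projection onto the ball of radius $\delta^{-\alpha}$ earns its keep. Using the local Lipschitz bound \eqref{eq3:grr_loc_Lip}, $|g^{r_1,r_2}(t,Y^\circ)-g^{r_1,r_2}(t,Z^\circ)| \le L(1+|Y^\circ|+|Z^\circ|)^{q-1}|Y^\circ-Z^\circ|$, and since $|Y^\circ|,|Z^\circ|\le\delta^{-\alpha}$ by construction, the factor $(1+|Y^\circ|+|Z^\circ|)^{q-1} \le (1+2\delta^{-\alpha})^{q-1} \le C\delta^{-\alpha(q-1)} = C\delta^{-1/2}$ for $\delta\le 1$ with the choice $\alpha=\tfrac{1}{2(q-1)}$. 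Hence $\delta^2 \cdot \delta^{-1}|Y^\circ-Z^\circ|^2 = \delta|Y^\circ-Z^\circ|^2 \le \delta|Y-Z|^2$ by \eqref{eq:PMilLip}, and after taking expectations this term is $\le C\delta\|Y-Z\|_{L^2}^2$, which gets absorbed into the $C_{\mathrm{stab}}\delta$ factor. Collecting everything yields \eqref{eq:stab_cond1} with $C_{\mathrm{stab}}$ depending only on $L$, $m$, $q$, $\eta$, and the chosen $\nu$, completing the proof that $(\Psi^{\mathrm{PMil}},\overline{h},\xi)$ is stochastically C-stable.
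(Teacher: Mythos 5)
Your proposal is correct and follows essentially the same route as the paper's proof: the same decomposition into conditional expectation and orthogonal complement, the same It\=o-isometry computation using the uncorrelated increments, Lemma~\ref{lem:PMil} with $\nu=2\eta$ (the paper fixes $\nu=2\eta$ rather than any $\nu\in(1,2\eta]$) for the drift and diffusion parts, and the same use of \eqref{eq3:grr_loc_Lip} together with $|Y^\circ|,|Z^\circ|\le\delta^{-\alpha}$ and $\alpha(q-1)=\tfrac12$ to absorb the Milstein correction into the $C_{\mathrm{stab}}\delta$ term. The only cosmetic difference is that your invocation of \eqref{eq:PMilLip} for the first two contributions is redundant, since Lemma~\ref{lem:PMil} already delivers the bound in terms of $|Y-Z|^2$.
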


\begin{proof}
  Let $(t,\delta) \in \mathbb{T}$ and $Y, Z \in L^2(\Omega,\F_t,\P;\R^d)$ be
  arbitrary. By recalling \eqref{eq:PsiPMil} we obtain
  \begin{align*}
    \E \big[ \Psi^{\mathrm{PMil}}(Y,t,\delta) -
    \Psi^{\mathrm{PMil}}(Z,t,\delta) | \F_{t} \big]
     = Y^\circ + \delta f(t, Y^\circ) - ( Z^\circ + \delta f(t,Z^\circ))
  \end{align*}
  and
  \begin{align*}
    &\big( \id - \E [ \, \cdot \, | \F_{t} ] \big)
    \big( \Psi^{\mathrm{PMil}}(Y,t,\delta) -
    \Psi^{\mathrm{PMil}}(Z,t,\delta)  \big)\\
    &\quad = \sum_{r = 1}^m \big( g^r(t , Y^\circ) - 
    g^r(t , Z^\circ) \big) I_{(r)}^{t,t+\delta} + \sum_{r_1,r_2 = 1}^m \big(
    g^{r_1,r_2}(t , Y^\circ) - g^{r_1,r_2}(t , Z^\circ) \big)
    I_{(r_2,r_1)}^{t,t+\delta}.
  \end{align*}
  In order to verify \eqref{eq:stab_cond1} with $\nu = 2 \eta \in (1,\infty)$
  let us note that the stochastic increments are pairwise uncorrelated and
  independent of $Y^\circ$ and $Z^\circ$. Hence it follows 
  \begin{align*}
    &\big\| \E \big[ \Psi^{\mathrm{PMil}}(Y,t,\delta) -
    \Psi^{\mathrm{PMil}}(Z,t,\delta) | \F_{t} \big]
    \big\|^2_{L^2(\Omega;\R^d)}\\
    &\qquad + \nu \big\| \big( \id - \E [ \, \cdot \, | \F_{t} ] \big)
    \big( \Psi^{\mathrm{PMil}}(Y,t,\delta) -  \Psi^{\mathrm{PMil}}(Z,t,\delta)
    \big) \big\|^2_{L^2(\Omega;\R^d)}\\
    &\quad = \big\| Y^\circ + \delta f(t, Y^\circ) - ( Z^\circ + \delta
    f(t,Z^\circ)) \big\|_{L^2(\Omega;\R^d)}^2\\
    &\qquad + \nu \delta \sum_{r = 1}^m \big\|  g^r(t , Y^\circ) - 
      g^r(t , Z^\circ) \big\|^2_{L^2(\Omega;\R^d)}\\
      &\qquad + \frac{1}{2}\nu \delta^2
      \sum_{r_1,r_2 = 1}^m \big\|  g^{r_1,r_2}(t , Y^\circ) -  g^{r_1,r_2}(t ,
      Z^\circ) \big\|^2_{L^2(\Omega;\R^d)}. 
  \end{align*}
  An application of Lemma~\ref{lem:PMil} with $\nu = 2 \eta$ shows that the
  first two terms are dominated by
  \begin{align*}
    &\E \Big[ \big| Y^\circ + \delta f(t, Y^\circ) - ( Z^\circ + \delta
    f(t,Z^\circ)) \big|^2 +  \nu \delta \sum_{r = 1}^m \big|  g^r(t ,
    Y^\circ) - g^r(t , Z^\circ) \big|^2 \Big]\\
    &\quad\le (1 + C \delta) \big\| Y - Z \big\|^2_{L^2(\Omega;\R^d)}.
  \end{align*}
  In addition, applications of \eqref{eq3:grr_loc_Lip} and
  \eqref{eq:PMilLip} yield
 \begin{align*}
    \big| g^{r_1,r_2}(t,x_1^\circ) - g^{r_1,r_2}(t,x_2^\circ) \big| &\le L
    \big( 1 + |x_1^\circ| + |x_2^\circ|  \big)^{q-1} \big|x_1^\circ
    - x_2^\circ \big| \\
     &\le L \big( 1 + 2 \delta^{-\alpha} \big)^{q-1} \big|x_1 - x_2 \big|,
  \end{align*}
  where we made use of the fact that $|x_1^\circ|, |x_2^\circ| \le
  \delta^{-\alpha}$. Since $\alpha(q-1) = \frac{1}{2}$ and $\delta \in (0,1]$
  it follows 
$\delta^{\frac{1}{2}}( 1 + 2 \delta^{-\alpha} )^{q-1} \le 3^{q-1}$ and,
  therefore, 
  \begin{align*}
    \nu \delta^2
    \sum_{r_1,r_2 = 1}^m \big\|  g^{r_1,r_2}(t , Y^\circ) -  g^{r_1,r_2}(t ,
    Z^\circ) \big\|^2_{L^2(\Omega;\R^d)} \le 3^{2(q-1)} \nu m^2 L^2 \delta \big\|
    Y - Z \big\|^2_{L^2(\Omega;\R^d)}.
  \end{align*}
  This completes the proof.
\end{proof}

\section{B-consistency of the projected Milstein method}
\label{sec:PMilcons}
In this section we show that the PMil method is stochastically $B$-consistent
of order $\gamma = 1$. To be more precise, we prove the following result: 

\begin{theorem}
  \label{th:PMilcons}
  Let $f$ and $g^r$, $r = 1,\ldots,m$, satisfy
  Assumption~\ref{as:fg} with $L \in (0,\infty)$ and $q \in [2,\infty)$. Let
  $\overline{h} \in (0,1]$ be arbitrary.
  If the exact solution $X$ to \eqref{sode} satisfies $\sup_{\tau \in [0,T]} \|
  X(\tau) \|_{L^{8q-6}(\Omega;\R^d)} < \infty$, then the projected Milstein
  method $(\Psi^{\mathrm{PMil}},\overline{h},X_0)$ with $\alpha =
  \frac{1}{2(q-1)}$ is stochastically B-consistent of order $\gamma = 1$.  
\end{theorem}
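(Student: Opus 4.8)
The plan is to verify the two B-consistency estimates \eqref{eq:cons_cond1} and \eqref{eq:cons_cond2} directly from the definitions, by comparing the exact increment $X(t+\delta)-X(t)$, written in Itô-integral form via \eqref{exact}, with the one-step map $\Psi^{\mathrm{PMil}}(X(t),t,\delta)$ from \eqref{eq:PsiPMil}. First I would split the local error into two parts: the \emph{projection error} $X(t)-X(t)^\circ$ and the \emph{Milstein truncation error} obtained after replacing $X(t)$ by its projection. For the projection error, I would use that $x-x^\circ=0$ whenever $|x|\le\delta^{-\alpha}$, so on the event $\{|X(t)|>\delta^{-\alpha}\}$ a Markov/Chebyshev inequality together with the moment bound $\sup_\tau\|X(\tau)\|_{L^{8q-6}}<\infty$ gives $\|X(t)-X(t)^\circ\|_{L^2}\le\mathbb{P}(|X(t)|>\delta^{-\alpha})^{1/2}\cdot(\text{moment})\le C\delta^{\alpha p/2 - \alpha}$ for a suitably high $p$; choosing the exponent $8q-6$ is exactly what makes this $O(\delta^{3/2})$ (hence $\le C\delta^{\gamma+1/2}$ with $\gamma=1$), and after applying $f$, $g^r$, $g^{r_1,r_2}$ and their local-Lipschitz bounds \eqref{eq3:loc_Lip}, \eqref{eq3:loc_Lip_g}, \eqref{eq3:grr_loc_Lip} one also gets the contribution of the projection to both the conditional-mean part ($O(\delta^{\gamma+1})$) and the orthogonal part ($O(\delta^{\gamma+1/2})$). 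This is where the specific moment exponent and the choice $\alpha=\frac{1}{2(q-1)}$ are used in tandem.

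For the Milstein truncation error (now working with $\overline{X}:=X(t)^\circ$ in place of $X(t)$, which is bounded by $\delta^{-\alpha}$), I would expand each exact integral by Itô's formula: write $f(s,X(s))=f(t,\overline X)+\int_t^s(\ldots)$ and similarly $g^r(s,X(s))=g^r(t,\overline X)+\sum_{r_2}\int_t^s g^{r_1,r_2}(\ldots)\diff{W^{r_2}}+\int_t^s(\text{drift terms})$, so that the leading stochastic terms match $\sum_r g^r(t,\overline X)I_{(r)}^{t,t+\delta}+\sum_{r_1,r_2}g^{r_1,r_2}(t,\overline X)I_{(r_2,r_1)}^{t,t+\delta}$ exactly. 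The remaining terms are genuine double/triple iterated integrals of coefficient-type expressions. For \eqref{eq:cons_cond1} one takes conditional expectation: the pure-martingale remainders vanish, and what survives are time-integral (or mixed) terms of order $\delta^2$, estimated using the polynomial growth bounds \eqref{eq3:poly_growth}, \eqref{eq3:poly_growth_x}, \eqref{eq3:poly_growth_g_x}, \eqref{eq3:poly_growth_t}, \eqref{eq3:grr_poly_growth} together with the moment bound on $X$ — here one must be careful that evaluating at $\overline X$ introduces the factor $\delta^{-\alpha(q-1)}=\delta^{-1/2}$ in the worst case, which is harmless since we have a spare power $\delta^2$. For \eqref{eq:cons_cond2} one estimates the $L^2$-norm of the orthogonal-complement part of the remainder directly via Itô's isometry: each iterated stochastic integral over $[t,t+\delta]$ of a coefficient-type integrand contributes a factor $\delta^{1/2}$ per integration, so double integrals give $\delta$ and the genuinely new triple-integral terms give $\delta^{3/2}$, matched against $\delta^{\gamma+1/2}=\delta^{3/2}$.

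The main obstacle I expect is bookkeeping the interplay between the projection radius $\delta^{-\alpha}$ and the polynomial growth rate $q$: every time a coefficient or its derivative is evaluated at $\overline X=X(t)^\circ$ one gains a factor up to $(1+2\delta^{-\alpha})^{q-1}\sim\delta^{-1/2}$ (for $g^{r_1,r_2}$ and $\partial f/\partial x$) or $\delta^{-1/4}$ (for $g^r$, $\partial g^r/\partial x$), and in the higher-order remainders (e.g. terms like $\frac{\partial}{\partial x}g^{r_1,r_2}\cdot g^{r_3}$) these factors can compound; one must check that in every remainder term the accumulated negative powers of $\delta$ from evaluation-at-$\overline X$ are strictly dominated by the positive powers of $\delta$ coming from the iterated integrals, so that the net exponent is still $\ge\gamma+\frac12$ (resp. $\ge\gamma+1$ after conditioning). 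A secondary technical point is that Itô's formula is being applied to the second derivatives implicitly (through $g^{r_1,r_2}$ and its Lipschitz bound \eqref{eq3:grr_loc_Lip}), so one should only expand to first order and treat the remainder as a Lipschitz-type difference rather than demanding $C^2$ coefficients — this is precisely why Assumption~\ref{as:fg} is phrased via local Lipschitz continuity of the first derivatives rather than boundedness of the second, and the argument should be organized to respect that. Once these exponent bookkeeping checks go through, the two estimates assemble from the triangle inequality applied to the projection-error and truncation-error pieces.
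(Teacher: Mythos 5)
Your plan matches the paper's proof in all essentials: the projection error is handled exactly as in the paper's Lemma~\ref{lem:PMilcons1} (a Markov/H\"older argument on the event $\{|X(t)|>\delta^{-\alpha}\}$ using the $L^{8q-6}$ moment bound and $\alpha=\tfrac{1}{2(q-1)}$), and the Milstein truncation error is handled by first-order mean-value expansions of $f$ and $g^r$ with Lipschitz-type remainders (Lemmas~\ref{lem:cons1}, \ref{lem:cons1b}, \ref{lem:cons2}) rather than a full It\=o expansion, precisely as you anticipate in your final remark. One small correction: for the conditional-mean estimate \eqref{eq:cons_cond1} the raw projection error $\|X(t)-X^{\circ}(t)\|_{L^2}$ must be $O(\delta^{2})$, not merely $O(\delta^{3/2})$ as you state --- and your own exponent $\delta^{\alpha p/2-\alpha}$ with $p=8q-6$ does give exactly $\delta^{2}$; also, the paper centers the truncation analysis at $X(t)$ rather than at $X^{\circ}(t)$, which makes the compounding of $\delta^{-\alpha}$ factors you worry about a non-issue (and in any case $|X^{\circ}(t)|\le|X(t)|$, so moment bounds suffice without ever invoking the projection radius there).
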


In preparation for the proof of Theorem~\ref{th:PMilcons} we introduce several
more technical lemmas. The first is cited from \cite[Lemma~6.5]{beyn2015}. It
formalizes a method of proof already found in \cite[Theorem~2.2]{higham2002b}. 

\begin{lemma}
  \label{lem:PMilcons1}
  For arbitrary $\alpha \in (0,\infty)$ and $\delta \in (0,1]$ consider the
  mapping $\R^d \ni x \mapsto x^\circ \in \R^d$ defined in \eqref{eq:circ}.
  Let $L \in (0,\infty)$, $\kappa \in [1, \infty)$ and let $\varphi \colon \R^d
  \to \R^d$ be a measurable mapping which satisfies
  \begin{align*}
    |\varphi(x) | \le L \big(1 + |x|^\kappa \big)
  \end{align*}
  for all $x \in \R^d$. For some $p \in (2,\infty)$ let $Y \in
  L^{p\kappa}(\Omega;\R^d)$. Then there exists a constant $C$ only depending on
  $L$ and $p$ with   
  \begin{align*}
    \big\| \varphi(Y) - \varphi(Y^\circ) \big\|_{L^2(\Omega;\R^d)} \le C \big(
    1 +  \|Y\|_{L^{p\kappa}(\Omega;\R^d)}^{\frac{1}{2}p \kappa} \big)
    \delta^{\frac{1}{2} \alpha (p -2 ) \kappa}.
  \end{align*}
\end{lemma}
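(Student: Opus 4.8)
The idea is to estimate the difference $\varphi(Y) - \varphi(Y^\circ)$ by exploiting that the projection only modifies $Y$ on the event where $|Y|$ is large, namely on $A_\delta := \{ |Y| > \delta^{-\alpha}\}$, while on its complement $Y^\circ = Y$ so the difference vanishes. First I would write
\begin{align*}
  \big\| \varphi(Y) - \varphi(Y^\circ) \big\|_{L^2(\Omega;\R^d)}^2
  = \E\big[ \one_{A_\delta} \big| \varphi(Y) - \varphi(Y^\circ) \big|^2 \big],
\end{align*}
and then bound the integrand using the polynomial growth hypothesis together with $|Y^\circ| \le \delta^{-\alpha} \le |Y|$ on $A_\delta$ (here $\delta \le 1$ and $\alpha > 0$ are used only to guarantee $\delta^{-\alpha}\ge 1$, so that $1+|Y^\circ|^\kappa \le 1 + \delta^{-\alpha\kappa} \le 2|Y|^\kappa$ on $A_\delta$). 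This gives a pointwise bound of the form $\one_{A_\delta}|\varphi(Y)-\varphi(Y^\circ)|^2 \le C\, \one_{A_\delta} (1 + |Y|^{2\kappa})$, and again on $A_\delta$ we have $1 \le \delta^{-\alpha\kappa}|Y|^{\kappa}\cdot\delta^{\alpha\kappa}|Y|^{-\kappa}$... more simply $\one_{A_\delta}(1+|Y|^{2\kappa}) \le 2\, \one_{A_\delta} |Y|^{2\kappa}$.

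The key device is then a Hölder/Markov (Chebyshev) trade-off on the event $A_\delta$: for the conjugate exponents $\frac{p}{2}$ and $\frac{p}{p-2}$ one has
\begin{align*}
  \E\big[ \one_{A_\delta} |Y|^{2\kappa} \big]
  \le \big\| |Y|^{2\kappa} \big\|_{L^{p/2}(\Omega)} \, \P(A_\delta)^{\frac{p-2}{p}}
  = \| Y \|_{L^{p\kappa}(\Omega;\R^d)}^{2\kappa}\, \P\big( |Y| > \delta^{-\alpha}\big)^{\frac{p-2}{p}},
\end{align*}
and by Markov's inequality applied to $|Y|^{p\kappa}$,
\begin{align*}
  \P\big( |Y| > \delta^{-\alpha}\big) \le \delta^{\alpha p \kappa}\, \E\big[ |Y|^{p\kappa}\big] = \delta^{\alpha p \kappa}\, \| Y \|_{L^{p\kappa}(\Omega;\R^d)}^{p\kappa}.
\end{align*}
Substituting this in yields $\E[\one_{A_\delta}|Y|^{2\kappa}] \le \|Y\|_{L^{p\kappa}}^{p\kappa}\, \delta^{\alpha(p-2)\kappa}$, and taking square roots (and absorbing the factor into $C$, together with the trivial bound $\|Y\|_{L^{p\kappa}}^{p\kappa/2} \le 1 + \|Y\|_{L^{p\kappa}}^{p\kappa/2}$) gives exactly the claimed estimate with a constant depending only on $L$ and $p$.

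There is no real obstacle here; the only point requiring a little care is the bookkeeping of exponents — making sure the power of $\delta$ that falls out of Markov's inequality is precisely $\delta^{\alpha(p-2)\kappa}$ after the $\frac{p-2}{p}$ power from Hölder is applied, and making sure the moment of $Y$ that is needed is $L^{p\kappa}$ and no higher. One should also note that the case $p\kappa = 2$ (which would make $\frac{p}{2}=1$ fail to be a genuine Hölder exponent) is excluded by the hypotheses $p > 2$ and $\kappa \ge 1$, so the argument applies as stated.
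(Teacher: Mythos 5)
Your argument is correct, and it is exactly the mechanism behind the proof the paper refers to (cited from [Lem.~6.5, beyn2015], which in turn formalizes the method of [Thm.~2.2, higham2002b]): localize to the event $\{|Y|>\delta^{-\alpha}\}$ where the projection acts, bound $\varphi(Y)-\varphi(Y^\circ)$ by the polynomial growth of $\varphi$, and trade integrability for powers of $\delta$ via H\"older with exponents $\tfrac{p}{2}$, $\tfrac{p}{p-2}$ followed by Markov's inequality. The exponent bookkeeping checks out, so nothing further is needed.
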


The proof of consistency also depends on the H\"older continuity of the exact
solution to \eqref{sode} with respect to the norm in $L^p(\Omega;\R^d)$ for
some $p \in [2,\infty)$. A proof is given in \cite[Proposition~5.4]{beyn2015}.

\begin{prop}
  \label{prop:Hoelder}
  Let $f$ and $g^r$, $r = 1,\ldots,m$, satisfy
  Assumption~\ref{as:fg} with $L \in (0,\infty)$ and $q \in [2,\infty)$.
  For every $p \in [2,\infty)$  there exists a constant $C=C(L,q,p)$ such
  that 
  \begin{align} \label{eq:hoelderest}
    \big\| X(t_1) - X(t_2) \big\|_{L^p(\Omega;\R^d)} \le C \big(1 +
    \sup_{t \in [0,T]} \|X(t)\|_{L^{pq}(\Omega;\R^d)}^q \big)
    | t_1 - t_2|^{\frac{1}{2}}
  \end{align}
  holds for all $t_1, t_2 \in [0,T]$ and for every solution $X$ of\eqref{sode}
    satisfying \newline $\sup_{t \in [0,T]}\|X(t)\|_{L^{pq}(\Omega;\R^d)} < \infty$.
\end{prop}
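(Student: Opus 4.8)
The plan is to argue directly from the integral equation \eqref{exact}. Fix $t_1, t_2 \in [0,T]$ and, without loss of generality, assume $t_1 \le t_2$. Subtracting the two instances of \eqref{exact} gives, almost surely,
\[
  X(t_2) - X(t_1) = \int_{t_1}^{t_2} f(s, X(s)) \diff{s} + \sum_{r=1}^m \int_{t_1}^{t_2} g^r(s, X(s)) \diff{W^r(s)},
\]
and I would bound the drift and the diffusion contribution separately in the norm of $L^p(\Omega;\R^d)$, using Minkowski's integral inequality for the Lebesgue integral and the Burkholder--Davis--Gundy inequality (applicable since $p \ge 2$) for the stochastic integral.

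For the drift term, Minkowski's inequality together with the polynomial growth bound \eqref{eq3:poly_growth} gives
\[
  \Bigl\| \int_{t_1}^{t_2} f(s,X(s)) \diff{s} \Bigr\|_{L^p(\Omega;\R^d)} \le \int_{t_1}^{t_2} \bigl\| f(s,X(s)) \bigr\|_{L^p(\Omega;\R^d)} \diff{s} \le L \int_{t_1}^{t_2} \bigl\| 1 + |X(s)| \bigr\|_{L^{pq}(\Omega)}^{q} \diff{s},
\]
where I used $\|(1+|X(s)|)^q\|_{L^p(\Omega)} = \|1+|X(s)|\|_{L^{pq}(\Omega)}^q$. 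Bounding the integrand by $(1 + \sup_{t\in[0,T]} \|X(t)\|_{L^{pq}(\Omega;\R^d)})^q$ and using $|t_2-t_1| \le T^{1/2}|t_2-t_1|^{1/2}$, this term is of the required form.

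For the diffusion term, the Burkholder--Davis--Gundy inequality followed by Minkowski's inequality with exponent $p/2 \ge 1$ yields
\[
  \Bigl\| \sum_{r=1}^m \int_{t_1}^{t_2} g^r(s,X(s)) \diff{W^r(s)} \Bigr\|_{L^p(\Omega;\R^d)} \le C_p \Bigl( \sum_{r=1}^m \int_{t_1}^{t_2} \bigl\| g^r(s,X(s)) \bigr\|_{L^p(\Omega;\R^d)}^2 \diff{s} \Bigr)^{1/2},
\]
with $C_p$ the BDG constant. Now \eqref{eq3:poly_growth_g} gives $\|g^r(s,X(s))\|_{L^p(\Omega)}^2 \le L^2 \|1 + |X(s)|\|_{L^{p(q+1)/2}(\Omega)}^{q+1}$, and since $q \ge 2$ we have $(q+1)/2 \le q$, so monotonicity of the $L^r(\Omega)$-norms on the probability space bounds this by $L^2 (1 + \sup_{t} \|X(t)\|_{L^{pq}(\Omega;\R^d)})^{q+1}$. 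Hence the diffusion term is at most $C_p L \sqrt{m}\, (1 + \sup_{t} \|X(t)\|_{L^{pq}(\Omega;\R^d)})^{(q+1)/2}\, |t_2-t_1|^{1/2}$, and $(q+1)/2 \le q$ again lets me raise the exponent to $q$. Collecting the two estimates into a single constant $C = C(L,q,p)$ (absorbing the fixed quantities $T$ and $m$) gives \eqref{eq:hoelderest}.

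The argument is essentially bookkeeping rather than containing a genuine obstacle; the points to watch are: (i) all moments invoked are finite, which is exactly guaranteed by the standing hypothesis $\sup_{t} \|X(t)\|_{L^{pq}(\Omega;\R^d)} < \infty$ once it is propagated through the growth bounds \eqref{eq3:poly_growth} and \eqref{eq3:poly_growth_g}; (ii) tracking the $L^r(\Omega)$-exponent that appears after applying the growth bounds --- the drift genuinely needs the $L^{pq}$-moment while the diffusion needs only the $L^{p(q+1)/2}$-moment, so the stated assumption is more than enough; and (iii) the monotonicity of $L^r(\Omega)$-norms, used to reduce $L^{p(q+1)/2}$ to $L^{pq}$. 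If anything is delicate it is only keeping the exponents straight so that the final bound carries the single power $q$ on the moment factor.
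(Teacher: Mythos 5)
Your proof is correct, and it is the standard argument (integral equation plus Minkowski for the drift, Burkholder--Davis--Gundy and the growth bounds \eqref{eq3:poly_growth}, \eqref{eq3:poly_growth_g} for the diffusion, with the exponents tracked exactly as you do) that the paper relies on by citing \cite[Proposition~5.4]{beyn2015} rather than reproving it. The only cosmetic remark is that your constant also absorbs $T$ and $m$, which is consistent with the paper since both are fixed throughout.
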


The next auxiliary result combines the H\"older estimates with growth functions.

\begin{prop}
  \label{prop:Hoeldergrowth}
 Let $q_1 \ge 0, q_2 >0$ and consider an $\R^d$-valued process $X(t),t\in [0,T]$
satisfying \eqref{eq:hoelderest} for $pq=2(q q_2+ q_1)$ and 
$ \sup_{t\in [0,T]} \|X(t)\|_{L^{pq}(\Omega;\R^d)} < \infty$.
Then there exists a constant $C$ such that for all $0 \le t_1\le t_2 \le T$  
\begin{equation}  
\begin{aligned} \label{eq:hoeldergrowthest}
  \|\big(1 +& |X(t_1)|+|X(t_2)|\big)^{q_1} \big| X(t_1) - X(t_2) \big|^{q_2}
\|_{L^2(\Omega;\R)}\\ \le &C \big(1 +
    \sup_{t \in [0,T]} \|X(t)\|_{L^{pq}(\Omega;\R^d)}^{\frac{pq}{2}} \big)
    | t_1 - t_2|^{\frac{q_2}{2}}.
  \end{aligned}
\end{equation}
  \end{prop}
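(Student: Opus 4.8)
The plan is to deduce \eqref{eq:hoeldergrowthest} from the already available H\"older estimate \eqref{eq:hoelderest} by a single application of H\"older's inequality in $\Omega$, choosing the two conjugate exponents so that the growth factor $\bigl(1+|X(t_1)|+|X(t_2)|\bigr)^{q_1}$ ends up raised precisely to the moment order $pq$ that is controlled by hypothesis, while the increment $|X(t_1)-X(t_2)|^{q_2}$ ends up raised precisely to the order $p$ appearing in \eqref{eq:hoelderest}. Throughout I would abbreviate $A := 1+|X(t_1)|+|X(t_2)|$, $B := |X(t_1)-X(t_2)|$ and $M := \sup_{t\in[0,T]}\|X(t)\|_{L^{pq}(\Omega;\R^d)}$, which is finite by assumption, and recall that $p=\tfrac{2(qq_2+q_1)}{q}$, so that $pq=2(qq_2+q_1)$ and $\tfrac{pq}{2}=q_1+qq_2$.

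First I would treat the degenerate case $q_1=0$ separately: here $p=2q_2$, hence $\|A^{q_1}B^{q_2}\|_{L^2(\Omega;\R)}=\|X(t_1)-X(t_2)\|_{L^{2q_2}(\Omega;\R^d)}^{q_2}$, and \eqref{eq:hoelderest} together with the elementary bound $(1+M^q)^{q_2}\le C(1+M^{qq_2})$ and $qq_2=\tfrac{pq}{2}$ yields the claim at once. For $q_1>0$ I would set $p_1:=\tfrac{qq_2+q_1}{q_1}$ and $p_2:=\tfrac{qq_2+q_1}{qq_2}$; a direct computation gives $p_1,p_2\in(1,\infty)$, $\tfrac{1}{p_1}+\tfrac{1}{p_2}=1$, and, using $pq=2(qq_2+q_1)$, the key identities $2q_1p_1=pq$ and $2q_2p_2=p$, hence also $\tfrac{pq}{p_1}=2q_1$ and $\tfrac{p}{p_2}=2q_2$. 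H\"older's inequality then gives
\begin{align*}
  \E\bigl[A^{2q_1}B^{2q_2}\bigr]
  \le\bigl(\E\bigl[A^{pq}\bigr]\bigr)^{1/p_1}\bigl(\E\bigl[B^{p}\bigr]\bigr)^{1/p_2}
  =\|A\|_{L^{pq}(\Omega;\R)}^{2q_1}\,\|B\|_{L^{p}(\Omega;\R)}^{2q_2}.
\end{align*}

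It then remains to bound the two norms on the right: $\|A\|_{L^{pq}(\Omega;\R)}\le 1+2M$ by the triangle inequality, while $\|B\|_{L^{p}(\Omega;\R)}=\|X(t_1)-X(t_2)\|_{L^{p}(\Omega;\R^d)}\le C(1+M^q)|t_1-t_2|^{1/2}$ by \eqref{eq:hoelderest}. Inserting these, taking square roots, and using the elementary inequality $(1+2M)^{q_1}(1+M^q)^{q_2}\le C(1+M^{q_1+qq_2})=C(1+M^{pq/2})$, valid for all $M\ge0$ with $C$ depending only on $q,q_1,q_2$, produces exactly \eqref{eq:hoeldergrowthest}. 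I expect the only delicate point to be the exponent bookkeeping, namely arranging that the two H\"older exponents are conjugate while simultaneously matching the available moment order $pq$ and the order $p$ of \eqref{eq:hoelderest}; once the identity $pq=2(qq_2+q_1)$ is used to verify $2q_1p_1=pq$ and $2q_2p_2=p$, everything else is routine, and the $q_1=0$ boundary case (where formally $p_1=\infty$) is covered by the direct argument above.
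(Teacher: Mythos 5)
Your proof is correct and follows essentially the same route as the paper: a single application of H\"older's inequality on $\Omega$ with conjugate exponents chosen so that the growth factor is measured in $L^{pq}$ and the increment in $L^{p}$ (your $p_1,p_2$ are exactly the paper's $\nu',\nu$ determined by the balancing condition $2\nu' q_1 = 2q\nu q_2 = pq$), with the $q_1=0$ case handled separately by a direct appeal to \eqref{eq:hoelderest} with $p=2q_2$. The exponent bookkeeping ($2q_1p_1=pq$, $2q_2p_2=p$) is verified correctly, so no gaps.
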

\begin{proof} We apply a H\"older estimate with arbitrary
$\nu> 1$, $\nu'=\frac{\nu}{\nu-1}$ and use \eqref{eq:hoelderest},
\begin{align*}
\|\big(1 +& |X(t_1)|+|X(t_2)|\big)^{q_1} \big| X(t_1) - X(t_2) \big|^{q_2}
\|_{L^2(\Omega;\R)}\\
\le & C \|1+|X(t_1)|+ |X(t_2)| \|_{L^{2 \nu' q_1}(\Omega;\R)}^{q_1}
\| X(t_2)-X(t_1)\|_{L^{2 \nu q_2 }(\Omega;\R^d)}^{q_2}\\
\le & C\big(1+\sup_{t\in [0,T]}\|X(t) \|_{L^{2 \nu' q_1}(\Omega;\R^d)}^{q_1}\big)
\big( 1+\sup_{t\in [0,T]}\|X(t) \|_{L^{2\nu q q_2 }(\Omega;\R^d)}^{q q_2}\big)
|t_1 -t_2|^{\frac{q_2}{2}} .
\end{align*}
The norms are balanced if we choose $2 \nu'q_1 = 2 q \nu q_2$ which leads
to $\nu = 1 + \frac{q_1}{q q_2}$ and  $2 \nu q q_2= 2(q q_2 + q_1)$. This
shows our assertion for $q_1  >0$. In case $q_1=0$ it is enough to just
apply \eqref{eq:hoelderest} with $p=2 q_2$.
\end{proof}

The following lemma is quoted from \cite[Lemma~5.5]{beyn2015}.

\begin{lemma}
  \label{lem:cons1} 
  Let Assumption~\ref{as:fg} be satisfied by $f$ and $g^r$, $r = 1,\ldots,m$,
  with $L \in (0,\infty)$ and $q \in [2,\infty)$.
  Further, let the exact solution $X$ to the SODE \eqref{sode} 
  satisfy $\sup_{t \in [0,T]} \| X(t) \|_{L^{4q-2}(\Omega;\R^d)} <
  \infty$. Then, there exists a constant $C$ such that for all $t_1, t_2,s \in
  [0,T]$ with $0 \le t_1 \le s \le t_2 \le T$ it holds
  \begin{align*}
    &\int_{t_1}^{t_2} \big\| f(\tau,X(\tau)) - f(s,X(t_1))
    \big\|_{L^2(\Omega;\R^d)} \diff{\tau}\\
    & \quad \le C \big( 1 + \sup_{t \in [0,T]} \big\| X(t)
    \big\|_{L^{4q-2}(\Omega;\R^d)}^{2q-1} \big) |t_1 - t_2|^{\frac{3}{2}}.
  \end{align*}
\end{lemma}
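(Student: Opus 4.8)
The claim is a local-error-type estimate for the drift term: over a grid interval $[t_1,t_2]$ with an intermediate point $s$, the $L^2(\Omega)$-integral of $f(\tau,X(\tau))-f(s,X(t_1))$ is of order $|t_1-t_2|^{3/2}$. The natural approach is to split the integrand via the triangle inequality into a time increment and a space increment,
\begin{align*}
  \big\| f(\tau,X(\tau)) - f(s,X(t_1)) \big\|_{L^2(\Omega;\R^d)}
  &\le \big\| f(\tau,X(\tau)) - f(s,X(\tau)) \big\|_{L^2(\Omega;\R^d)}\\
  &\quad + \big\| f(s,X(\tau)) - f(s,X(t_1)) \big\|_{L^2(\Omega;\R^d)},
\end{align*}
and then estimate each piece using the polynomial growth/Lipschitz bounds \eqref{eq3:loc_Lip_t} and \eqref{eq3:loc_Lip} derived from Assumption~\ref{as:fg}, together with the moment bound on $X$ and the H\"older regularity from Proposition~\ref{prop:Hoelder}. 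Integrating the resulting bound over $\tau \in [t_1,t_2]$ and using $|s-\tau| \le |t_1-t_2|$ should produce one power $|t_1-t_2|$ from the time-continuity term, one power $|t_1-t_2|^{1/2}$ from the H\"older estimate in the space term, and a further $|t_1-t_2|$ from the outer integration, giving the claimed exponent $\tfrac32$.

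First I would handle the time-increment term: by \eqref{eq3:loc_Lip_t} we have $|f(\tau,X(\tau))-f(s,X(\tau))| \le L(1+|X(\tau)|)^q|\tau-s|$, so taking $L^2(\Omega)$-norms gives $L\|(1+|X(\tau)|)^q\|_{L^2(\Omega;\R)}|\tau-s| \le C(1+\sup_{t}\|X(t)\|_{L^{2q}(\Omega;\R^d)}^q)|t_1-t_2|$. Integrating over $\tau$ yields $C(1+\sup_t\|X(t)\|_{L^{2q}}^q)|t_1-t_2|^2$, which is even better than $|t_1-t_2|^{3/2}$ and is controlled by the hypothesis $\sup_t\|X(t)\|_{L^{4q-2}} < \infty$ since $4q-2 \ge 2q$. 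For the space-increment term, \eqref{eq3:loc_Lip} gives $|f(s,X(\tau))-f(s,X(t_1))| \le L(1+|X(\tau)|+|X(t_1)|)^{q-1}|X(\tau)-X(t_1)|$, and here Proposition~\ref{prop:Hoeldergrowth} applies directly with $q_1 = q-1$ and $q_2 = 1$: the required moment exponent is $pq = 2(q\cdot 1 + (q-1)) = 4q-2$, exactly matching the hypothesis. This yields $\|(1+|X(\tau)|+|X(t_1)|)^{q-1}|X(\tau)-X(t_1)|\|_{L^2(\Omega;\R)} \le C(1+\sup_t\|X(t)\|_{L^{4q-2}}^{2q-1})|\tau-t_1|^{1/2} \le C(1+\sup_t\|X(t)\|_{L^{4q-2}}^{2q-1})|t_1-t_2|^{1/2}$, and integrating over $\tau \in [t_1,t_2]$ contributes the final factor $|t_1-t_2|$, so this term is $O(|t_1-t_2|^{3/2})$ with the stated constant. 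Adding the two contributions and absorbing the lower-order term (using $|t_1-t_2| \le T$) completes the argument.

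The only mild subtlety — and the step I would be most careful with — is the bookkeeping of moment exponents: one must check that the power $L^{4q-2}(\Omega;\R^d)$ assumed in the hypothesis is exactly what is needed for the space-increment term through Proposition~\ref{prop:Hoeldergrowth} (it is, since $2(qq_2+q_1) = 2(q + q - 1) = 4q-2$), and that the time-increment term needs only $L^{2q}$, which is dominated. No genuinely hard analytic obstacle arises here: the result is essentially a consequence of the polynomial Lipschitz bounds combined with the already-established $L^p$-H\"older continuity of $X$, packaged through Proposition~\ref{prop:Hoeldergrowth}. The cubic-in-$|t_1-t_2|^{1/2}$ scaling is then automatic from the three factors of $|t_1-t_2|^{1/2}$ described above.
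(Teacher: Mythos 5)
Your argument is correct: the triangle-inequality split into a time increment (handled by \eqref{eq3:loc_Lip_t}, giving $O(|t_1-t_2|^2)$) and a space increment (handled by \eqref{eq3:loc_Lip} together with Proposition~\ref{prop:Hoeldergrowth} at $q_1=q-1$, $q_2=1$, whose moment requirement $2(qq_2+q_1)=4q-2$ matches the hypothesis exactly) is precisely the standard route, and it is the same decomposition the paper uses for the companion Lemma~\ref{lem:cons1b} and that underlies the cited proof in \cite{beyn2015}, to which the paper defers for this statement. No gaps.
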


The order of convergence indicated by Lemma~\ref{lem:cons1} can be increased if
we insert the conditional expectation with respect to the $\sigma$-field
$\F_{t_1}$: 

\begin{lemma}
  \label{lem:cons1b} 
  Let Assumption~\ref{as:fg} be satisfied by $f$ and $g^r$, $r = 1,\ldots,m$
  with $L \in (0,\infty)$ and $q \in [2,\infty)$.
  Further, let the exact solution $X$ to the SODE \eqref{sode} 
  satisfy $\sup_{t \in [0,T]} \| X(t) \|_{L^{6q-4}(\Omega;\R^d)} <
  \infty$. Then, there exists a constant $C$ such that for all $t_1, t_2,s \in
  [0,T]$ with $0 \le t_1 \le s \le t_2 \le T$ it holds
  \begin{align*}
    &\int_{t_1}^{t_2} \big\| \E\big[ f(\tau,X(\tau)) - f(s,X(t_1)) |
    \F_{t_1} \big] \big\|_{L^2(\Omega;\R^d)} \diff{\tau}\\
    & \quad \le C \big( 1 + \sup_{t \in [0,T]} \big\| X(t)
    \big\|_{L^{6q-4}(\Omega;\R^d)}^{3q-2} \big) |t_1 - t_2|^{2}.
  \end{align*}
\end{lemma}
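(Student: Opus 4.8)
The plan is to mimic the proof of Lemma~\ref{lem:cons1}, but to exploit the martingale cancellation that occurs once we take the conditional expectation $\E[\,\cdot\,|\F_{t_1}]$ of the stochastic integral terms in the It\=o--Taylor expansion of $f(\tau,X(\tau))$. First I would write, via the It\=o formula applied to the process $\tau \mapsto f(\tau, X(\tau))$ on $[t_1,\tau]$,
\begin{align*}
  f(\tau,X(\tau)) - f(s,X(t_1))
  &= \big( f(t_1,X(t_1)) - f(s,X(t_1)) \big) + \int_{t_1}^\tau \tfrac{\partial f}{\partial t}(\sigma, X(\sigma))\diff{\sigma}\\
  &\quad + \int_{t_1}^\tau \tfrac{\partial f}{\partial x}(\sigma, X(\sigma)) f(\sigma,X(\sigma))\diff{\sigma}\\
  &\quad + \tfrac12 \sum_{r=1}^m \int_{t_1}^\tau \big(\tfrac{\partial^2 f}{\partial x^2}\big)(\sigma,X(\sigma))\big(g^r(\sigma,X(\sigma)),g^r(\sigma,X(\sigma))\big)\diff{\sigma}\\
  &\quad + \sum_{r=1}^m \int_{t_1}^\tau \tfrac{\partial f}{\partial x}(\sigma,X(\sigma)) g^r(\sigma,X(\sigma))\diff{W^r(\sigma)}.
\end{align*}
When we condition on $\F_{t_1}$ the stochastic integral is annihilated (it is an $(\F_\tau)$-martingale null at $t_1$), so $\E[f(\tau,X(\tau)) - f(s,X(t_1))\,|\,\F_{t_1}]$ is a sum of the deterministic time increment $f(t_1,X(t_1)) - f(s,X(t_1))$ and conditional expectations of Lebesgue integrals over $[t_1,\tau]$.

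Here a genuine subtlety arises: Assumption~\ref{as:fg} only gives $f,g^r \in \mathcal C^1$, not $\mathcal C^2$, so the second-order term above is not literally available. I would therefore avoid the full It\=o expansion of $f$ and instead proceed as in \cite[Lemma~5.5]{beyn2015}: write $X(\tau) = X(t_1) + \int_{t_1}^\tau f\,\diff\sigma + \sum_r \int_{t_1}^\tau g^r\,\diff{W^r}$, use the fundamental theorem of calculus in the state variable,
\[
  f(\tau,X(\tau)) - f(\tau,X(t_1)) = \int_0^1 \tfrac{\partial f}{\partial x}\big(\tau, X(t_1) + \theta(X(\tau)-X(t_1))\big)\,\diff\theta \,\big(X(\tau)-X(t_1)\big),
\]
and then split $X(\tau)-X(t_1)$ into its drift part $D_\tau := \int_{t_1}^\tau f\,\diff\sigma$ and diffusion part $M_\tau := \sum_r \int_{t_1}^\tau g^r\,\diff{W^r}$. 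The drift-difference $f(\tau,X(\tau)) - f(\tau,X(t_1))$ acting on $D_\tau$, together with the time-increment $f(\tau,X(t_1)) - f(s,X(t_1))$ and the difference $f(t_1,X(t_1))-f(s,X(t_1))$ (all handled pointwise by \eqref{eq3:loc_Lip_t}, \eqref{eq3:loc_Lip} and \eqref{eq3:poly_growth}), already give the bound $C(1+\sup\|X\|^{\ldots})\,|t_1-t_2|^{3/2}$ from Lemma~\ref{lem:cons1}, and an extra integration over $\tau \in [t_1,t_2]$ lifts the exponent $\tfrac32$ to $2$. The delicate term is the contribution of $M_\tau$: one must show $\|\E[\int_0^1 \tfrac{\partial f}{\partial x}(\tau, X(t_1)+\theta(X(\tau)-X(t_1)))\,\diff\theta\,M_\tau\,|\,\F_{t_1}]\|_{L^2} = O(|t_1-t_2|)$ rather than the naive $O(|t_1-t_2|^{1/2})$ that Cauchy--Schwarz would give.

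To extract that extra half power I would center the Jacobian: writing $A_\theta(\tau) := \tfrac{\partial f}{\partial x}(\tau, X(t_1)+\theta(X(\tau)-X(t_1)))$ and $A_0 := \tfrac{\partial f}{\partial x}(\tau,X(t_1))$, note that $\E[A_0 M_\tau\,|\,\F_{t_1}] = A_0\,\E[M_\tau\,|\,\F_{t_1}] = 0$ since $A_0$ is $\F_{t_1}$-measurable (up to the harmless $\tau$-dependence, which is deterministic) and $M$ is a martingale null at $t_1$; hence only $\E[(A_\theta(\tau)-A_0)M_\tau\,|\,\F_{t_1}]$ survives. Using the local Lipschitz bound \eqref{eq3:loc_lip_f_x},
\[
  |A_\theta(\tau) - A_0| \le L\big(1 + |X(t_1)| + |X(\tau)|\big)^{q-2}\,\theta\,|X(\tau)-X(t_1)|,
\]
so by conditional Cauchy--Schwarz and then H\"older in $\Omega$,
\[
  \big\|\E[(A_\theta(\tau)-A_0)M_\tau\,|\,\F_{t_1}]\big\|_{L^2(\Omega)}
  \le C\,\big\|(1+|X(t_1)|+|X(\tau)|)^{q-2}|X(\tau)-X(t_1)|\big\|_{L^{4}}\,\big\|M_\tau\big\|_{L^{4}}.
\]
Now Proposition~\ref{prop:Hoeldergrowth} (with $q_1 = q-2$, $q_2 = 1$, which demands moments of order $2(q\cdot 1 + (q-2)) = 4q-4$, comfortably below $6q-4$) bounds the first factor by $C(1+\sup\|X\|^{\ldots})|\tau-t_1|^{1/2}$, and the Burkholder--Davis--Gundy inequality together with \eqref{eq3:poly_growth_g} bounds $\|M_\tau\|_{L^4}$ by $C(1+\sup\|X\|^{(q+1)/2})|\tau-t_1|^{1/2}$; multiplying, we gain $|\tau-t_1|^{1}$, and integrating $\diff\tau$ over $[t_1,t_2]$ produces the desired $|t_1-t_2|^{2}$. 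Tracking the moment orders through BDG (which needs $\|X\|_{L^{2(q+1)}}$-type control, i.e.\ roughly $L^{4q}$ after H\"older, but one can rebalance the exponents in the final H\"older split to fit $6q-4$, exactly as in the statement) is the bookkeeping one has to do carefully; this exponent-chasing, not any conceptual difficulty, is the main obstacle. The main obstacle, then, is to organize the H\"older exponents so that every moment invoked stays within $L^{6q-4}(\Omega;\R^d)$ while still harvesting the full power $|t_1-t_2|^2$; the martingale-centering trick for the Jacobian is what makes this possible and is the crux of the argument.
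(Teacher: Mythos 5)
Your argument is, at its core, the same as the paper's. After (rightly) abandoning the It\=o--Taylor expansion because $f$ is only $\mathcal{C}^1$, you land on exactly the paper's skeleton: split off the time increment via \eqref{eq3:loc_Lip_t}; write the spatial increment with the mean value theorem; use that the Jacobian evaluated at $X(t_1)$ is $\F_{t_1}$-measurable, so that conditioning annihilates the martingale part of $X(\tau)-X(t_1)$ and leaves only the drift integral, which is pointwise $O(|\tau-t_1|)$ and integrates to $|t_1-t_2|^2$; and control the remainder by the local Lipschitz bound \eqref{eq3:loc_lip_f_x} on $\tfrac{\partial f}{\partial x}$ combined with the H\"older continuity of $X$. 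The only structural difference is in the remainder: the paper keeps $R_f=(\bar A-\tfrac{\partial f}{\partial x}(s,X(t_1)))(X(\tau)-X(t_1))$, bounds it by $C(1+|X(t_1)|+|X(\tau)|)^{q-2}|X(\tau)-X(t_1)|^2$, and applies Proposition~\ref{prop:Hoeldergrowth} with $q_1=q-2$, $q_2=2$, which consumes exactly $2(qq_2+q_1)=6q-4$ moments; you split the increment into drift and martingale parts first and only center the Jacobian against $M_\tau$. Both are valid.

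One concrete correction to your bookkeeping. The displayed $L^4\times L^4$ H\"older split does not fit the moment budget: Proposition~\ref{prop:Hoeldergrowth} with $q_1=q-2$, $q_2=1$ controls the $L^2$ norm of $(1+|X(t_1)|+|X(\tau)|)^{q-2}|X(\tau)-X(t_1)|$ (cost $4q-4$), not its $L^4$ norm; the $L^4$ norm costs $8q-8$ moments when balanced, which exceeds $6q-4$ for every $q>2$. The rebalancing you allude to does work: a three-factor H\"older inequality with $\tfrac12=\tfrac1a+\tfrac1b+\tfrac1c$ and $a(q-2)=bq=c\tfrac{q+1}{2}$ needs moments of order only $5q-3\le 6q-4$, with the three factors contributing $O(1)$, $O(|\tau-t_1|^{1/2})$ (Proposition~\ref{prop:Hoelder}), and $O(|\tau-t_1|^{1/2})$ (Burkholder--Davis--Gundy with \eqref{eq3:poly_growth_g}) as you claim. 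Separately, the phrase ``an extra integration over $\tau$ lifts the exponent $\tfrac32$ to $2$'' is not right as stated, since Lemma~\ref{lem:cons1} already contains the $\tau$-integration; what actually happens is that the drift and time-increment contributions are pointwise $O(|\tau-t_1|)$ rather than the $O(|\tau-t_1|^{1/2})$ of the full increment, so a single integration already yields $|t_1-t_2|^2$. Neither point affects the validity of the approach once repaired.
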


\begin{proof}
  Since $\| \E[ Y | \F_{t_1}] \|_{L^2(\Omega;\R^d)} \le \| Y
  \|_{L^2(\Omega;\R^d)}$ for all $Y \in L^2(\Omega;\R^d)$ 
  the integrand is estimated by
  \begin{align*}
    &\big\| \E\big[ f(\tau,X(\tau)) - f(s,X(t_1)) | \F_{t_1} \big]
    \big\|_{L^2(\Omega;\R^d)}\\ 
    &\;\le \big\| f(\tau,X(\tau)) - f(s,X(\tau)) \big\|_{L^2(\Omega;\R^d)}
    + \big\| \E\big[ f(s,X(\tau)) -
    f(s,X(t_1)) | \F_{t_1} \big] \big\|_{L^2(\Omega;\R^d)}
  \end{align*}
  for every $\tau \in [t_1,t_2]$. From \eqref{eq3:loc_Lip_t} it follows that
  \begin{align}
    \label{eq:term1}
    \begin{split}
      \big\| f(\tau,X(\tau)) - f(s,X(\tau)) \big\|_{L^2(\Omega;\R^d)} &\le L
      \| 1 + | X(\tau)| \|_{L^{2q}(\Omega;\R)}^q  |\tau - s | \\
      &\le  C \big( 1 + \sup_{t \in [0,T]} \| X(t) \|_{L^{2q}(\Omega;\R^d)}^q
      \big) |t_2 - t_1 |, 
    \end{split}
  \end{align}
  which after integrating over $\tau$,   yields the
  desired estimate since $2q \le 6q-4$ for  $q \ge 1$. 

  Next, from the mean value theorem we obtain
  \begin{align*}
    f(s,X(\tau)) - f(s,X(t_1)) = \frac{\partial f}{\partial x}(s, X(t_1)) \big(
    X(\tau) - X(t_1) \big) + R_f,
  \end{align*}
  where the remainder term $R_f$ is given by
  \begin{align*}
    R_f= \int_{0}^{1} \Big( \frac{\partial f}{\partial x}\big(s,
    X(t_1) + \rho (X(\tau) - X(t_1)) \big) - \frac{\partial f}{\partial x}(s,
    X(t_1)) \Big) \diff{\rho}\, \big( X(\tau) - X(t_1) \big).
  \end{align*}
  Using the SODE \eqref{sode} we obtain
  \begin{align*}
    \E \Big[ \frac{\partial f}{\partial x}(s, X(t_1)) \big(
    X(\tau) - X(t_1) \big) \Big| \F_{t_1} \Big]  = \E \Big[
    \frac{\partial f}{\partial x}(s, X(t_1)) 
    \int_{t_1}^{\tau} f(\sigma,X(\sigma) )  \diff{\sigma} \Big| \F_{t_1} \Big].
  \end{align*}
  After taking the $L^2$-norm and inserting \eqref{eq3:poly_growth} and
  \eqref{eq3:poly_growth_x} we arrive at
  \begin{align}
    \label{eq:term2}
    \begin{split}
      &\Big\| \E \Big[ \frac{\partial f}{\partial x}(s, X(t_1)) \big(
      X(\tau) - X(t_1) \big) \Big| \F_{t_1} \Big] \Big\|_{L^2(\Omega;\R^d)}\\
      &\quad \le \int_{t_1}^{\tau} \big\| L ( 1 + |X(t_1)| \big)^{q-1} L \big(
      1 + |X(\sigma)| \big)^{q} \big\|_{L^2(\Omega;\R)} \diff{\sigma}\\
      &\quad\le C \big( 1 + \sup_{t \in [0,T]} \big\| X(t)
      \big\|_{L^{4q-2}(\Omega;\R^d)}^{2q-1} \big) |\tau - t_1|. 
    \end{split}
  \end{align}
  Hence, we also obtain the desired estimate for this term after integrating
  over $\tau$.

  Finally, we have to estimate the $L^2$-norm of the remainder term $R_f$. For
  this we make use of \eqref{eq3:loc_lip_f_x} and get
\begin{equation}\label{eq:remainderf}
  \begin{aligned}
    | R_f | &\le \int_0^1 L \big( 1 + | X(t_1) + \rho (X(\tau) - X(t_1))
    | + |  X(t_1) | \big)^{q-2} \diff{\rho}\, \big|  X(\tau) - X(t_1)
    \big|^2\\
    &\le C \big( 1 + \big| X(t_1) \big| + \big| X(\tau) \big| \big)^{q-2}
    \big|  X(\tau) - X(t_1) \big|^2
  \end{aligned}
\end{equation}
  for a constant $C$ only depending on $L$ and $q$.
Applying Proposition \ref{prop:Hoeldergrowth} with $q_2=2, q_1=q-2$ yields
$2(q q_2 + q_1) = 6q-4$ and therefore,
  \begin{align}
    \label{eq5:Rf}
    \| R_f \|_{L^2(\Omega;\R^d)} \le C \big( 1 + \sup_{t \in [0,T]} \| X(t)
    \|^{3q - 2}_{L^{6q -4}(\Omega;\R^d)} \big) |\tau - t_1|.
  \end{align}
  \end{proof}

The next lemma contains the corresponding estimate for the stochastic integral.

\begin{lemma}
  \label{lem:cons2} 
  Let Assumption~\ref{as:fg} be satisfied by $f$ and $g^r$, $r = 1,\ldots,m$
  with $L \in (0,\infty)$ and $q \in [2,\infty)$. 
  Further, let the exact solution $X$ to \eqref{sode} 
  satisfy $\sup_{t \in [0,T]} \| X(t) \|_{L^{6q-4}(\Omega;\R^d)} <
  \infty$. Then, there exists a constant $C$ such that for all $r =1,\ldots,m$
  and $t_1, t_2,s \in [0,T]$ with $0 \le t_1 \le s \le t_2 \le T$ it holds
  \begin{align*}
    &\Big\| \int_{t_1}^{t_2} g^{r}(\tau,X(\tau)) -
    g^{r}(s,X(t_1)) \diff{W^{r}(\tau)} - \sum_{r_2 = 1}^m 
    g^{r,r_2}(s,X(t_1)) 
    I_{(r_2,r)}^{t_1,t_2}  \Big\|_{L^2(\Omega;\R^d)}\\ 
    & \quad \le C  \big( 1 + \sup_{t \in [0,T]} \big\| X(t)
    \big\|_{L^{6q-4}(\Omega;\R^d)}^{3q - 2} \big) |t_1 - t_2|^{\frac{3}{2}}.
  \end{align*}
\end{lemma}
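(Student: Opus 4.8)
The plan is to rewrite the whole left-hand side as a single It\^o integral and then bound its integrand. Using $I_{(r_2,r)}^{t_1,t_2} = \int_{t_1}^{t_2}\big(W^{r_2}(\tau)-W^{r_2}(t_1)\big)\diff{W^r(\tau)}$ together with the $\F_{t_1}$-measurability of $g^{r,r_2}(s,X(t_1))$, the expression in the statement equals $\int_{t_1}^{t_2} D(\tau)\diff{W^r(\tau)}$ with the adapted, square-integrable integrand
\begin{equation*}
  D(\tau) := g^r(\tau,X(\tau)) - g^r(s,X(t_1)) - \sum_{r_2 = 1}^m g^{r,r_2}(s,X(t_1))\big(W^{r_2}(\tau)-W^{r_2}(t_1)\big).
\end{equation*}
By the It\^o isometry it then suffices to show $\int_{t_1}^{t_2}\|D(\tau)\|_{L^2(\Omega;\R^d)}^2\diff\tau \le C\big(1+\sup_{t}\|X(t)\|_{L^{6q-4}(\Omega;\R^d)}^{3q-2}\big)^2 |t_1-t_2|^3$, and for this it is enough to verify $\|D(\tau)\|_{L^2(\Omega;\R^d)} \le C\big(1+\sup_{t}\|X(t)\|_{L^{6q-4}(\Omega;\R^d)}^{3q-2}\big)|t_1-t_2|$ for every $\tau \in [t_1,t_2]$.

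I would split $D(\tau) = D_1(\tau) + D_2(\tau)$ with $D_1(\tau) := g^r(\tau,X(\tau)) - g^r(s,X(\tau))$ and $D_2(\tau) := g^r(s,X(\tau)) - g^r(s,X(t_1)) - \sum_{r_2}g^{r,r_2}(s,X(t_1))(W^{r_2}(\tau)-W^{r_2}(t_1))$. The term $D_1$ is immediate from the temporal Lipschitz bound \eqref{eq3:loc_Lip_g_t}, since $\|D_1(\tau)\|_{L^2} \le L\,\|(1+|X(\tau)|)^{\frac{q+1}{2}}\|_{L^2}\,|\tau-s|$ and $q+1 \le 6q-4$. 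For $D_2(\tau)$ the crucial point is that Assumption~\ref{as:fg} only provides $C^1$-regularity of $g^r$, so It\^o's formula is not available and the Milstein correction must be produced directly: the fundamental theorem of calculus gives
\begin{equation*}
  g^r(s,X(\tau)) - g^r(s,X(t_1)) = \tfrac{\partial g^r}{\partial x}(s,X(t_1))\big(X(\tau)-X(t_1)\big) + R_g(\tau),
\end{equation*}
with $R_g(\tau) = \int_0^1\big(\tfrac{\partial g^r}{\partial x}(s,X(t_1)+\rho(X(\tau)-X(t_1))) - \tfrac{\partial g^r}{\partial x}(s,X(t_1))\big)\diff\rho\,\big(X(\tau)-X(t_1)\big)$; inserting the integral equation \eqref{exact} for $X(\tau)-X(t_1)$ and using $g^{r,r_2}(s,X(t_1)) = \tfrac{\partial g^r}{\partial x}(s,X(t_1))g^{r_2}(s,X(t_1))$ to cancel the leading stochastic part, one is left with
\begin{equation*}
  D_2(\tau) = \tfrac{\partial g^r}{\partial x}(s,X(t_1))\!\int_{t_1}^\tau\! f(\sigma,X(\sigma))\diff\sigma + \sum_{r_2 = 1}^m \tfrac{\partial g^r}{\partial x}(s,X(t_1))\!\int_{t_1}^\tau\!\big(g^{r_2}(\sigma,X(\sigma)) - g^{r_2}(s,X(t_1))\big)\diff{W^{r_2}(\sigma)} + R_g(\tau).
\end{equation*}

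It remains to bound these three pieces in $L^2(\Omega;\R^d)$ by $C(\ldots)|t_1-t_2|$. The drift piece is handled with \eqref{eq3:poly_growth_g_x}, \eqref{eq3:poly_growth} and a H\"older inequality in $\Omega$ balancing the growth exponents $\tfrac{q-1}{2}$ and $q$ (costing an $L^{3q-1}$-moment of $X$). For the stochastic piece I would pull the $\F_{t_1}$-measurable factor inside the integral, apply the It\^o isometry, and for each $\sigma$ decompose $g^{r_2}(\sigma,X(\sigma)) - g^{r_2}(s,X(t_1)) = (g^{r_2}(\sigma,X(\sigma)) - g^{r_2}(s,X(\sigma))) + (g^{r_2}(s,X(\sigma)) - g^{r_2}(s,X(t_1)))$, estimating the first summand by \eqref{eq3:loc_Lip_g_t} and the second by \eqref{eq3:loc_Lip_g} together with the H\"older continuity of $X$ from Proposition~\ref{prop:Hoelder}; the resulting integrand is $O(|t_1-t_2|^2 + |\sigma-t_1|)$, whose integral over $[t_1,\tau]$ is $O(|t_1-t_2|^2)$, giving the factor $|t_1-t_2|$ after the square root. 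Finally, $R_g(\tau)$ is controlled through the local Lipschitz bound \eqref{eq3:loc_Lip_g_x} on $\tfrac{\partial g^r}{\partial x}$: for $q \ge 3$ one gets $|R_g(\tau)| \le C(1+|X(t_1)|+|X(\tau)|)^{\frac{q-3}{2}}|X(\tau)-X(t_1)|^2$ and invokes Proposition~\ref{prop:Hoeldergrowth} with $q_1 = \tfrac{q-3}{2}$, $q_2 = 2$ (so $2(qq_2+q_1) = 5q-3 \le 6q-4$), while for $q < 3$ the weight is $\le 1$ and one simply uses $\|X(\tau)-X(t_1)\|_{L^4(\Omega;\R^d)}^2 \le C|\tau-t_1|$ from \eqref{eq:hoelderest} (requiring an $L^{4q}$-moment, again $\le 6q-4$). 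Collecting all pieces yields the assertion.

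The main obstacle I anticipate is the combination of two things: having to reproduce the Milstein term ``by hand'' via the fundamental theorem of calculus plus the integral equation for $X$, rather than via It\^o's formula (which would require $C^2$-regularity that is not assumed), and checking that exactly the right lower-order term cancels; and then carrying the moments of $X$ through the nested H\"older inequalities — in the style of the proof of Proposition~\ref{prop:Hoeldergrowth} — so that every term closes under $\sup_t\|X(t)\|_{L^{6q-4}(\Omega;\R^d)} < \infty$, with the borderline case $q < 3$ (negative exponent in \eqref{eq3:loc_Lip_g_x}) needing the separate, slightly cruder treatment of $R_g$ sketched above.
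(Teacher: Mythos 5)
Your proposal is correct and follows essentially the same route as the paper's proof: rewrite the expression as a single It\^o integral, reduce via the isometry to a pointwise $O(|t_1-t_2|)$ bound on the integrand, split off the temporal difference via \eqref{eq3:loc_Lip_g_t}, produce the Milstein correction by the mean value theorem plus insertion of \eqref{exact}, and control the remainder $R_g$ through \eqref{eq3:loc_Lip_g_x} and Proposition~\ref{prop:Hoeldergrowth} (the paper merges your two cases by writing the exponent as $(q-3)_+/2$, which for $q<3$ reduces to exactly the cruder $L^4$-increment bound you describe). No gaps.
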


\begin{proof}
  Let us fix $r =1,\ldots,m$ arbitrary. We first 
  consider the square of the $L^2$-norm and by recalling \eqref{eq2:stochincr1}
  and \eqref{eq2:stochincr2} we get
  \begin{align*}
    &\E \Big[ \Big|
    \int_{t_1}^{t_2} \Big( g^{r}(\tau,X(\tau)) -
    g^{r}(s,X(t_1)) - \sum_{r_2 = 1}^m g^{r,r_2}(s,X(t_1))
    I_{(r_2)}^{t_1,\tau}\Big) \diff{W^{r}(\tau)} \Big|^2 \Big]\\
    &\quad = \int_{t_1}^{t_2} \E \Big[ \Big| 
    g^{r}(\tau,X(\tau)) - g^{r}(s,X(t_1)) - \sum_{r_2 = 1}^m
    g^{r,r_2}(s,X(t_1)) I_{(r_2)}^{t_1,\tau} \Big|^2 \Big] \diff{\tau}
  \end{align*}
  by an application of the It\=o isometry. Thus, the assertion is proved if
  there exists a constant $C$ independent of $\tau$, $t_1$, $t_2$, and $s$ such
  that 
  \begin{align*}
    \Gamma(\tau) &:= \Big\| 
    g^{r}(\tau,X(\tau)) - g^{r}(s,X(t_1)) - \sum_{r_2 = 1}^m
    g^{r,r_2}(s,X(t_1)) 
    I_{(r_2)}^{t_1,\tau} \Big\|_{L^2(\Omega;\R^d)}\\
    &\le C \big( 1 + \sup_{t \in [0,T]} \big\| X(t)
    \big\|_{L^{6q-4}(\Omega;\R^d)}^{3q-2} \big) |t_1 - t_2|
  \end{align*}
  for every $\tau \in [t_1,t_2]$. For this we first
  estimate $\Gamma(\tau)$ by
  \begin{align*}
    \Gamma(\tau) &\le \big\| g^{r}(\tau,X(\tau)) - g^{r}(s,X(\tau)) 
    \big\|_{L^2(\Omega;\R^d)} \\
    &\quad + \Big\|  g^{r}(s,X(\tau))
    - g^{r}(s,X(t_1)) - \sum_{r_2 = 1}^m g^{r,r_2}(s,X(t_1)) 
    I_{(r_2)}^{t_1,\tau} \Big\|_{L^2(\Omega;\R^d)}.
  \end{align*}
  In the same way as in \eqref{eq:term1} one shows for the first term
  \begin{align*}
    \big\| g^{r}(\tau,X(\tau)) - g^{r}(s,X(\tau)) 
    \big\|_{L^2(\Omega;\R^d)} \le C \big( 1 + \sup_{t\in[0,T]} \| X(t)
    \|^{\frac{q+1}{2}}_{L^{q+1}(\Omega;\R^d)} \big) |t_2 - t_1 |
  \end{align*}
and notes $q+1 \le 6 q -4$.
  Next, we again apply the mean value theorem 
  \begin{align*}
    g^r(s,X(\tau)) - g^r(s,X(t_1)) = \frac{\partial g^r}{\partial x}(s, X(t_1))
    \big( X(\tau) - X(t_1) \big) + R_g,
  \end{align*}
  where this time the remainder term $R_g$ is given by
  \begin{align*}
    R_g:= \int_{0}^{1} \Big( \frac{\partial g^r}{\partial x}\big(s,
    X(t_1) + \rho (X(\tau) - X(t_1)) \big) - \frac{\partial g^r}{\partial x}(s,
    X(t_1)) \Big) \diff{\rho}\, \big( X(\tau) - X(t_1) \big).
  \end{align*}
  Using the condition \eqref{eq3:loc_Lip_g_x} we  get
  \begin{align*} 
    |R_g| \le C \big( 1+ |X(t_1)|+ |X(\tau)| \big)^{q_1} |X(\tau)-X(t_1)|^2,
    \quad \text{where} \;q_1= \tfrac{(q-3)_+}{2}.
  \end{align*}
  Therefore,  Proposition \ref{prop:Hoeldergrowth} applies with $q_2=2$ and
  leads to 
  \begin{align*}
    \| R_g \|_{L^2(\Omega;\R^d)} \le C \big( 1 + \sup_{t \in [0,T]} \| X(t)
    \|^{3q-2}_{L^{6q-4}(\Omega;\R^d)} \big) |\tau - t_1|,
  \end{align*}
  since $2(q q_2+q_1)=\max(5q-3,4q)\le 6q-4$ for $q\ge 2$. 
  It remains to give a corresponding estimate for
  \begin{align*}
    \Gamma_2(\tau) := \Big\| \frac{\partial g^r}{\partial x}(s, X(t_1))
    \big( X(\tau) - X(t_1) \big) - \sum_{r_2 = 1}^m
    g^{r,r_2}(s,X(t_1)) 
    I_{(r_2)}^{t_1,\tau} \Big\|_{L^2(\Omega;\R^d)}.
  \end{align*}
  After inserting \eqref{exact} we finally arrive at the two terms 
  \begin{align*}
    &\Gamma_2(\tau) \le \Big\| \frac{\partial g^r}{\partial x}(s, X(t_1)) 
    \int_{t_1}^{\tau} f(\sigma,X(\sigma)) \diff{\sigma}
    \Big\|_{L^2(\Omega;\R^d)}\\
    &\quad+ \sum_{r_2 = 1}^m \Big\| \frac{\partial g^r}{\partial x}(s, X(t_1)) 
    \int_{t_1}^{\tau} g^{r_2}(\sigma,X(\sigma)) \diff{W^{r_2}(\sigma)} - 
    g^{r,r_2}(s,X(t_1)) I_{(r_2)}^{t_1,\tau}
    \Big\|_{L^2(\Omega;\R^d)}. 
  \end{align*}
  Using \eqref{eq3:poly_growth_g_x}, the first term is estimated analogously to \eqref{eq:term2},
\begin{align*}
\Big\| \frac{\partial g^r}{\partial x}(s, X(t_1)) 
    \int_{t_1}^{\tau} f(\sigma,X(\sigma)) \diff{\sigma}
    \Big\|_{L^2(\Omega;\R^d)} \le C
    \big(1+\sup_{t\in[0,T]}\|X(t)\|_{L^{3q-1}(\Omega,\R^d)}^{\frac{3q-1}{2}}
    \big)|\tau -t_1|. 
\end{align*} 
For the second term
  we insert \eqref{eq3:grr} and \eqref{eq2:stochincr1} and obtain 
 from It\=o's isometry 
  \begin{align*}
    &\sum_{r_2 = 1}^m \Big\| \frac{\partial g^r}{\partial x}(s, X(t_1)) 
    \int_{t_1}^{\tau} g^{r_2}(\sigma,X(\sigma)) \diff{W^{r_2}(\sigma)} - 
    g^{r,r_2}(s,X(t_1)) I_{(r_2)}^{t_1,\tau}
    \Big\|_{L^2(\Omega;\R^d)}\\
    &\quad = \sum_{r_2 = 1}^m \Big\| \int_{t_1}^{\tau} \frac{\partial
    g^r}{\partial x}(s, X(t_1)) \big(g^{r_2}(\sigma,X(\sigma)) -
    g^{r_2}(s,X(t_1)) \big) \diff{W^{r_2}(\sigma)} \Big\|_{L^2(\Omega;\R^d)}\\
    &\quad = \sum_{r_2 = 1}^m \Big( \int_{t_1}^{\tau} \Big\|
    \frac{\partial g^r}{\partial x}(s, X(t_1)) \big(g^{r_2}(\sigma,X(\sigma)) -
    g^{r_2}(s,X(t_1)) \big) \Big\|_{L^2(\Omega;\R^d)}^2 \diff{\sigma}
    \Big)^{\frac{1}{2}}. 
  \end{align*}
  Now, it follows from \eqref{eq3:loc_Lip_g_t} and \eqref{eq3:loc_Lip_g} that 
  \begin{align*}
    &\big| g^{r_2}(\sigma,X(\sigma)) - g^{r_2}(s,X(t_1)) \big|\\
    &\quad \le \big|
    g^{r_2}(\sigma,X(\sigma)) - g^{r_2}(\sigma,X(t_1))  \big| + \big|
    g^{r_2}(\sigma,X(t_1)) - g^{r_2}(s,X(t_1)) \big|\\ 
    &\quad \le L \big( 1 + |X(t_1)| + |X(\sigma)|  \big)^{\frac{q-1}{2}}
    |X(\sigma) -X(t_1) | 
    + L \big( 1 + |X(t_1)| \big)^{\frac{q+1}{2}} |\sigma - s |.
  \end{align*}
  Hence, the growth estimate \eqref{eq3:poly_growth_g_x} and 
 Proposition \ref{prop:Hoeldergrowth} with $q_1=q-1,q_2=1$ yield
  \begin{align*}
    &\Big\| \frac{\partial g^r}{\partial x}(s, X(t_1))
    \big(g^{r_2}(\sigma,X(\sigma)) - g^{r_2}(s,X(t_1)) \big)
    \Big\|_{L^2(\Omega;\R^d)} \\
    &\quad \le L \big\|  \big(1 + |X(t_1)|)^{\frac{q-1}{2}}  \big|
    g^{r_2}(\sigma,X(\sigma)) - g^{r_2}(s,X(t_1)) \big|
    \big\|_{L^2(\Omega,\R)}\\ 
    &\quad \le L^2 \big\| \big(1 + |X(t_1)| + |X(\sigma)|\big)^{q-1}
     \big| X(\sigma) - X(t_1) \big|\big\|_{L^2(\Omega,\R)}\\
    & \qquad + L^2 \|\big(1+|X(t_1)|\big)^q \|_{L^2(\Omega,\R)} |\sigma -s| \\
    & \quad \le C \Big( \big(1+ \sup_{t\in [0,T]}\|X(t)
    \|_{L^{4q-2}(\Omega,\R^d)}^{2q-1}\big) |\sigma - t_1|^{\frac{1}{2}} \\
    & \qquad \qquad + \big(1+\sup_{t\in [0,T]}\|X(t)
    \|_{L^{2q}(\Omega,\R^d)}^{q}\big)|\sigma -s| \Big). 
  \end{align*}
   To sum up, we have shown
  \begin{align*}
    \Gamma_2(\tau) \le C \big( 1 + \sup_{t \in [0,T]} \| X(t) \|_{L^{4q -
    2}(\Omega;\R^d)}^{2q-1} \big)  |t_1 - t_2|. 
  \end{align*}
 Since $4q-2 \le 6q-4$, this completes the proof.
\end{proof}
The proof shows that it is sufficient to have bounds for moments of order 
$\max(5q-3,4q)$ instead of $6q-4$.
However, in view of the weaker estimate in Lemma \ref{lem:cons1b},
this does not improve the result of Theorem \ref{th:PMilcons}.

Now we are well-prepared for the proof of Theorem~\ref{th:PMilcons}.

\begin{proof}[Proof of Theorem~\ref{th:PMilcons}]
  We first verify \eqref{eq:cons_cond1} with $\gamma = 1$ for the PMil method.
  For this, let $(t,\delta) \in \mathbb{T}$ be arbitrary. After inserting
  \eqref{exact} and \eqref{eq:PsiPMil} we obtain
  \begin{align*}
    &\big\| \E \big[ X(t + \delta) - \Psi^{\mathrm{PMil}}(X(t),t,\delta) \big|
    \F_{t} \big] \big\|_{L^2(\Omega;\R^d)}\\
    &\quad = \Big\| \E \Big[ X(t) + \int_{t}^{t+\delta} f(\tau,X(\tau))
    \diff{\tau} - X^{\circ}(t) - \delta f(t,X^{\circ}(t)) \Big| \F_{t} \Big]
    \Big\|_{L^2(\Omega;\R^d)}\\
    &\quad \le \big\| X(t) - X^{\circ}(t) \big\|_{L^2(\Omega;\R^d)} + \delta
    \big\| f(t,X(t)) - f(t,X^{\circ}(t)) \big\|_{L^2(\Omega;\R^d)}\\
    &\qquad + \int_{t}^{t+\delta} \big\| \E \big[ f(\tau,X(\tau)) - f(t,X(t))
    \big| \F_t \big] \big\|_{L^2(\Omega;\R^d)} \diff{\tau}.
  \end{align*}
  By applying Lemma~\ref{lem:PMilcons1} with $\varphi = \id$, $\kappa =
  1$, $p = 8q - 6$, and $\alpha = \frac{1}{2(q-1)}$ we obtain 
  \begin{align*}
    \big\| X(t) - X^{\circ}(t) \big\|_{L^2(\Omega;\R^d)} \le 
    C \big( 1 + \big\| X(t) \big\|_{L^{8q - 6}(\Omega;\R^d)}^{4q - 3}
    \big) \delta^2, 
  \end{align*}
  since $\frac{1}{2}\alpha(p-2)\kappa = 2$.
  Similarly, we estimate the second term by Lemma~\ref{lem:PMilcons1} 
  with $\varphi = f(t,\cdot)$, $\kappa = q$, and $p = 6 - \frac{4}{q}$. Since
  in this case $\frac{1}{2}\alpha(p-2)\kappa = 1$ we get
  \begin{align}
    \label{eq:term3}
    \delta \big\| f(t,X(t)) - f(t,X^{\circ}(t)) \big\|_{L^2(\Omega;\R^d)}
    \le C \big( 1 + \big\| X(t) \big\|_{L^{6q - 4}(\Omega;\R^d)}^{3q-2}
    \big) \delta^2.
  \end{align}
  The last term is estimated by Lemma~\ref{lem:cons1b} with $t_1 = s = t$ and
  $t_2= t+\delta$,
  \begin{align*}
    &\int_{t}^{t+\delta} \big\| \E \big[ f(\tau,X(\tau)) - f(t,X(t))
    \big| \F_t \big] \big\|_{L^2(\Omega;\R^d)} \diff{\tau} \\
    &\quad \le C \big( 1 + \sup_{t\in[0,T]} \big\| X(t) \big\|_{L^{6q -
    4}(\Omega;\R^d)}^{3q-2} \big) \delta^2.
  \end{align*}
  This completes the proof of \eqref{eq:cons_cond1}. For the proof of
  \eqref{eq:cons_cond2} we first insert \eqref{exact} and \eqref{eq:PsiPMil}.
  Then, in the same way as above we obtain the following four terms
  \begin{align*}
    &\big\| ( \id - \E [\,\cdot\,|\F_t]) \big( X(t + \delta) -
    \Psi^{\mathrm{PMil}}(X(t),t,\delta) \big) \big\|_{L^2(\Omega;\R^d)}\\  
    &\le \sum_{r = 1}^m \big\| \big( g^r(t,X(t)) - g^r(t,X^{\circ}(t))
    \big) I_{(r)}^{t,t+\delta} \big\|_{L^2(\Omega;\R^d)} \\
    &+\sum_{r,r_2 = 1}^m \big\| \big( g^{r,r_2}(t,X(t)) -
    g^{r,r_2}(t,X^{\circ}(t)) \big) I_{(r,r_2)}^{t,t+\delta}
    \big\|_{L^2(\Omega;\R^d)}\\ 
    &+\Big\| ( \id - \E [\,\cdot\,|\F_t]) \int_{t}^{t+\delta}
    f(\tau,X(\tau)) \diff{\tau} \Big\|_{L^2(\Omega;\R^d)}\\
    &+ \sum_{r = 1}^m \Big\| \int_{t}^{t + \delta} g^{r}(\tau,X(\tau)) -
    g^{r}(t,X(t)) \diff{W^{r}(\tau)} - \sum_{r_2 = 1}^m 
    g^{r,r_2}(t,X(t)) I_{(r_2,r)}^{t,t+\delta}  \Big\|_{L^2(\Omega;\R^d)}.
  \end{align*}
  Since the stochastic increment $I_{(r)}^{t,t+\delta}$ is independent of
  $\F_t$ it directly follows that
  \begin{align*}
    &\big\| \big( g^r(t,X(t)) - g^r(t,X^{\circ}(t))
    \big) I_{(r)}^{t,t+\delta} \big\|_{L^2(\Omega;\R^d)}\\
    &\quad = \delta^{\frac{1}{2}} \big\| 
    g^r(t,X(t)) - g^r(t,X^{\circ}(t)) \big\|_{L^2(\Omega;\R^d)}.
  \end{align*}
  Then, we apply Lemma~\ref{lem:PMilcons1} 
  with $\varphi = g^r(t,\cdot)$, $\kappa = \frac{q+1}{2}$, and $p = 10 -
  \frac{16}{q+1}$. As above, this yields $\frac{1}{2}\alpha(p-2)\kappa = 1$ and   we get 
  \begin{align*}
    \delta^{\frac{1}{2}} \big\| g^r(t,X(t)) - g^r(t,X^{\circ}(t))
    \big\|_{L^2(\Omega;\R^d)} \le C \big( 1 + \sup_{t\in[0,T]} \big\| X(t)
    \big\|_{L^{5q - 3}(\Omega;\R^d)}^{\frac{5}{2}q- \frac{3}{2}} \big)
    \delta^{\frac{3}{2}}
  \end{align*}
  for every $r = 1,\ldots,m$. In the same way we obtain for the second term 
  \begin{align*}
    &\big\| \big( g^{r,r_2}(t,X(t)) - g^{r,r_2}(t,X^{\circ}(t))
    \big) I_{(r,r_2)}^{t,t+\delta} \big\|_{L^2(\Omega;\R^d)}\\
    &\quad = \frac{1}{\sqrt{2}} \delta \big\| 
    g^{r,r_2}(t,X(t)) - g^{r,r_2}(t,X^{\circ}(t)) \big\|_{L^2(\Omega;\R^d)}.
  \end{align*}
  Then, a further application of Lemma~\ref{lem:PMilcons1} 
  with $\varphi = g^{r,r_2}(t,\cdot)$, $\kappa = q$, and $p = 4 -
  \frac{2}{q}$ gives
  \begin{align*}
    \delta \big\| g^{r,r_2}(t,X(t)) - g^{r,r_2}(t,X^{\circ}(t))
    \big\|_{L^2(\Omega;\R^d)} \le C \big( 1 + \sup_{t\in[0,T]} \big\| X(t)
    \big\|_{L^{4q - 2}(\Omega;\R^d)}^{2q- 1} \big)
    \delta^{\frac{3}{2}}    
  \end{align*}
  for every $r,r_2 = 1,\ldots,m$, since in this case
  $\frac{1}{2}\alpha(p-2)\kappa = \frac{1}{2}$. 
    
  Next, since $f(t,X(t))$ is $\F_t$-measurable it follows for the third term
  that
  \begin{align*}
    &\Big\| ( \id - \E [\,\cdot\,|\F_t]) \int_{t}^{t+\delta}
    f(\tau,X(\tau)) \diff{\tau} \Big\|_{L^2(\Omega;\R^d)}\\
    &\quad = \Big\| ( \id - \E [\,\cdot\,|\F_t]) \int_{t}^{t+\delta}
    f(\tau,X(\tau)) - f(t,X(t)) \diff{\tau} \Big\|_{L^2(\Omega;\R^d)}.    
  \end{align*}
  By making use of $\| ( \id - \E [\,\cdot\,|\F_t]) Y
  \|_{L^2(\Omega;\R^d)} \le \| Y \|_{L^2(\Omega;\R^d)}$ one directly deduces
  the desired estimate from Lemma~\ref{lem:cons1}. Finally, the last
  term is estimated by Lemma~\ref{lem:cons2}.
\end{proof}

\section{C-stability of the split-step backward Milstein method}
\label{sec:SSBMstab}

In this section we verify that Assumption~\ref{as:fg} and condition
\eqref{eq2:onesidedSSBM} are sufficient for the C-stability of the split-step
backward Milstein method.

The results of Proposition~\ref{prop:homeomorph} below are needed in order to
show that the SSBM method is a well-defined one-step method in the sense of
Definition~\ref{def:onestep}. Further, the inequality \eqref{eq:stab} plays a
key role in the proof of the C-stability of the SSBM method and generalizes a
similar estimate for the split-step backward Euler method from
\cite[Corollary~4.2]{beyn2015}. 

\begin{prop}
  \label{prop:homeomorph}
  Let the functions $f \colon [0,T] \times \R^d \to \R^d$ and $g^r \colon [0,T]
  \times \R^d \to \R^d$, $r = 1,\ldots,m$, satisfy Assumption~\ref{as:fg} and
  condition \eqref{eq2:onesidedSSBM} with
  $L \in (0,\infty)$, $\eta_1 \in (1,\infty)$, and
  $\eta_2 \in (0,\infty)$. Let
  $\overline{h} \in (0,L^{-1})$ be given and define for every $\delta \in
  (0,\overline{h}]$ the mapping $F_\delta \colon [0,T] \times \R^d \to \R^d$ by
  $F_\delta(t,x) = x - \delta f(t, x)$. Then, the mapping $\R^d \ni x \mapsto
  F_\delta(t,x) \in \R^d$ is a homeomorphism for every $t \in [0,T]$. 
  
  In addition, the inverse $F_\delta^{-1}(t,\cdot) \colon \R^d
  \to \R^d$ satisfies
  \begin{align}
    \label{eq:Fhinv_lip}
    \big| F_\delta^{-1}(t,x_1)-F_\delta^{-1}(t,x_2) \big| &\le (1 - L
    \delta)^{-1} | x_1 - x_2 |, \\ 
    \label{eq:Fhinv_growth}
    \big| F_\delta^{-1}(t,x) \big| &\le(1 - L
    \delta)^{-1} \big( L \delta + | x | \big),
  \end{align}
  for every $x,x_1, x_2 \in \R^d$ and $t \in [0,T]$. Moreover, there
  exists a constant $C_1$ only depending on $L$ and $\overline{h}$ such
  that
  \begin{align}
    \label{eq:stab}
    \begin{split}
      &\big| F_\delta^{-1}(t,x_1) - F_\delta^{-1}(t,x_2) \big|^2 + \eta_1
      \delta \sum_{r = 1}^m \big| g^r(t, F_\delta^{-1}(t,x_1)) - g^r(t,
      F_\delta^{-1}(t,x_2)) \big|^2\\
      &\quad + \eta_2 \delta \sum_{r_1,r_2 = 1}^{m} \big| 
      g^{r_1,r_2}(t, F_\delta^{-1}(t,x_1)) - 
      g^{r_1,r_2}(t,F_\delta^{-1}(t,x_2)) \big|^2 \le (1 + C_1
      \delta) \big| x_1 - x_2 \big|^2 
    \end{split}
  \end{align}
  for every $x_1, x_2 \in \R^d$ and $t \in [0,T]$.
\end{prop}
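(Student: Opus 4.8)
The plan is to derive everything from two one-sided Lipschitz inequalities for the drift. From \eqref{eq3:onesided}, dropping the nonnegative diffusion terms (recall $\eta>0$), one has $\langle f(t,x_1)-f(t,x_2),x_1-x_2\rangle \le L|x_1-x_2|^2$ for all $t$ and $x_1,x_2$, while \eqref{eq2:onesidedSSBM} is the sharper bound that in addition controls the $g^r$ and $g^{r_1,r_2}$ increments. Fix $t\in[0,T]$ and $\delta\in(0,\overline{h}]$. First I would establish strong monotonicity of $F_\delta(t,\cdot)$: since $F_\delta(t,x)=x-\delta f(t,x)$, expanding $\langle F_\delta(t,x_1)-F_\delta(t,x_2),x_1-x_2\rangle = |x_1-x_2|^2-\delta\langle f(t,x_1)-f(t,x_2),x_1-x_2\rangle$ and using the first bound gives $\langle F_\delta(t,x_1)-F_\delta(t,x_2),x_1-x_2\rangle \ge (1-L\delta)|x_1-x_2|^2$, and $1-L\delta>0$ because $\delta\le\overline{h}<L^{-1}$. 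Strong monotonicity yields injectivity and, via Cauchy--Schwarz, coercivity (hence properness); combined with continuity of $F_\delta(t,\cdot)$ (which holds since $f$ is continuously differentiable, hence continuous), invariance of domain shows the image is open, properness shows it is closed, and connectedness of $\R^d$ gives surjectivity. Thus $F_\delta(t,\cdot)$ is a homeomorphism onto $\R^d$, exactly as for the split-step backward Euler map in \cite[Corollary~4.2]{beyn2015}.

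Next I would prove the two pointwise bounds by writing $u_i:=F_\delta^{-1}(t,x_i)$, so that $x_i=u_i-\delta f(t,u_i)$. The strong monotonicity estimate rewrites as $\langle x_1-x_2,u_1-u_2\rangle \ge (1-L\delta)|u_1-u_2|^2$, and combining this with $\langle x_1-x_2,u_1-u_2\rangle \le |x_1-x_2|\,|u_1-u_2|$ gives \eqref{eq:Fhinv_lip}. For \eqref{eq:Fhinv_growth} I would set $z:=F_\delta^{-1}(t,x)$, i.e.\ $z=x+\delta f(t,z)$, so $|z|^2=\langle z,x\rangle+\delta\langle z,f(t,z)\rangle$; the one-sided Lipschitz condition with second argument $0$ together with $|f(t,0)|\le L$ from \eqref{eq3:poly_growth} gives $\langle z,f(t,z)\rangle \le L|z|^2+L|z|$, whence $(1-L\delta)|z|^2 \le |z|\big(|x|+L\delta\big)$, and dividing by $|z|$ (the case $z=0$ being trivial) yields \eqref{eq:Fhinv_growth}.

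For the stability estimate \eqref{eq:stab} I would again use $u_i=F_\delta^{-1}(t,x_i)$ and expand $\langle x_1-x_2,u_1-u_2\rangle = |u_1-u_2|^2-\delta\langle f(t,u_1)-f(t,u_2),u_1-u_2\rangle$. Inserting \eqref{eq2:onesidedSSBM} produces the lower bound $\langle x_1-x_2,u_1-u_2\rangle \ge (1-L\delta)|u_1-u_2|^2+\delta\eta_1\sum_{r=1}^m|g^r(t,u_1)-g^r(t,u_2)|^2+\delta\eta_2\sum_{r_1,r_2=1}^m|g^{r_1,r_2}(t,u_1)-g^{r_1,r_2}(t,u_2)|^2$, while Cauchy--Schwarz together with the already proved bound \eqref{eq:Fhinv_lip} gives $\langle x_1-x_2,u_1-u_2\rangle \le |x_1-x_2|\,|u_1-u_2| \le (1-L\delta)^{-1}|x_1-x_2|^2$. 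Comparing the two bounds, dividing by $1-L\delta>0$, and using $\tfrac{\delta\eta_i}{1-L\delta}\ge\delta\eta_i$, I would arrive at $|u_1-u_2|^2+\delta\eta_1\sum_{r}|g^r(t,u_1)-g^r(t,u_2)|^2+\delta\eta_2\sum_{r_1,r_2}|g^{r_1,r_2}(t,u_1)-g^{r_1,r_2}(t,u_2)|^2 \le (1-L\delta)^{-2}|x_1-x_2|^2$, and finally $(1-L\delta)^{-2}\le 1+2L(1-L\overline{h})^{-2}\delta$ for $\delta\le\overline{h}$, which is \eqref{eq:stab} with $C_1:=2L(1-L\overline{h})^{-2}$. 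The point to watch, and the reason the argument is routed through \eqref{eq:Fhinv_lip} rather than through a direct expansion of $|x_1-x_2|^2=|u_1-u_2-\delta(f(t,u_1)-f(t,u_2))|^2$ followed by discarding the $\delta^2|f(t,u_1)-f(t,u_2)|^2$ term, is that \eqref{eq:stab} is required for all $\delta$ up to $\overline{h}$ with $\overline{h}$ only below $L^{-1}$, whereas the naive expansion only yields a usable sign for $\delta<\tfrac{1}{2}L^{-1}$; the homeomorphism step is the only one needing a genuinely topological input, but it is classical and already used in \cite{beyn2015}.
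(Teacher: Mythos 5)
Your proposal is correct and follows essentially the same route as the paper: monotonicity of $F_\delta(t,\cdot)$ from the one-sided Lipschitz condition, substitution of $x_i=F_\delta^{-1}(t,y_i)$ into \eqref{eq2:onesidedSSBM}, Cauchy--Schwarz combined with \eqref{eq:Fhinv_lip} to reach the bound $(1-L\delta)^{-2}|x_1-x_2|^2$, and an elementary estimate $(1-L\delta)^{-2}\le 1+C_1\delta$ (the paper uses convexity of $\delta\mapsto(1-L\delta)^{-2}$ to get the slightly sharper constant $L(2-L\overline{h})(1-L\overline{h})^{-2}$, while your direct computation gives $2L(1-L\overline{h})^{-2}$, which is equally serviceable). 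The only difference is that you spell out the homeomorphism property and the bounds \eqref{eq:Fhinv_lip}, \eqref{eq:Fhinv_growth}, which the paper delegates to the Uniform Monotonicity Theorem and to \cite{beyn2015}.
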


\begin{proof}
  The first part is a direct consequence of the
  Uniform Monotonicity Theorem (see for instance, \cite[Chap.6.4]{ortega2000},  
  \cite[Theorem~C.2]{stuart1996}). The estimates \eqref{eq:Fhinv_lip} and
  \eqref{eq:Fhinv_growth} are standard and a proof is found, for example, in
  \cite[Sec.~4]{beyn2015}. 

  Regarding \eqref{eq:stab} it first follows from \eqref{eq2:onesidedSSBM} that
  \begin{align*}
    &\langle F_\delta(t,x_1) - F_\delta(t,x_2), x_1 - x_2 \rangle\\
    &\quad = | x_1 - x_2 |^2 - \delta 
    \langle f(t,x_1) - f(t,x_2), x_1 - x_2 \rangle\\
    &\quad\ge (1 - L \delta) | x_1 - x_2 |^2 + \eta_1 \delta \sum_{r = 1}^m
    \big| g^r(t,x_1) - g^r(t,x_2) \big|^2\\
    &\qquad + \eta_2 \delta \sum_{r_1,r_2 = 1}^m
    \big| g^{r_1,r_2}(t,x_1) - g^{r_1,r_2}(t,x_2) \big|^2
  \end{align*}
  for all $x_1, x_2 \in \R^d$. For some $y_1, y_2 \in \R^d$ we substitute $x_1
  = F_\delta^{-1}(t,y_1)$ and $x_2 = F_\delta^{-1}(t,y_2)$ into this
  inequality. Then, after some rearranging we obtain
  \begin{align*}
    &\big|F_\delta^{-1}(t,y_1) - F_\delta^{-1}(t,y_2)\big|^2 + \eta_1 \delta
    \sum_{r = 1}^m \big| g^r(t,F_\delta^{-1}(t,y_1)) -
    g^r(t,F_\delta^{-1}(t,y_2)) \big|^2\\
    &\qquad + \eta_2 \delta \sum_{r_1,r_2 = 1}^m
    \big| g^{r_1,r_2}(t,F_\delta^{-1}(t,y_1)) -
    g^{r_1,r_2}(t,F_\delta^{-1}(t,y_2)) \big|^2 \\
    &\quad \le \big\langle y_1 - y_2, F_\delta^{-1}(t,y_1) -
    F_\delta^{-1}(t,y_2) \big\rangle + L \delta \big| F_\delta^{-1}(t,y_1) -
    F_\delta^{-1}(t,y_2)\big|^2. 
  \end{align*}
  Next, as in the proof of \cite[Corollary~4.2]{beyn2015} we apply 
  the Cauchy-Schwarz inequality and \eqref{eq:Fhinv_lip}. This yields
  \begin{align*}
    &\big\langle y_1 - y_2, F_\delta^{-1}(t,y_1) -
    F_\delta^{-1}(t,y_2) \big\rangle + L \delta \big| F_\delta^{-1}(t,y_1) -
    F_\delta^{-1}(t,y_2)\big|^2\\ 
    &\quad \le |y_1 - y_2 |  \big|F_\delta^{-1}(t,y_1) -
    F_\delta^{-1}(t,y_2)\big| +  L \delta \big| F_\delta^{-1}(t,y_1) -
    F_\delta^{-1}(t,y_2)\big|^2\\
    &\quad \le(1 - L \delta)^{-1} \big( 1 + (1 - L \delta)^{-1} L \delta \big)
    |y_1 - y_2 |^2 =(1-L \delta)^{-2}|y_1-y_2|^2
  \end{align*}
  for all $y_1, y_2 \in \R^d$.
  Finally, note that $b(\delta)=(1-L \delta)^{-2}$ is a convex function, hence
  for all $\delta \in [0,\overline{h}]$, 
  \begin{align*}
    (1 - L \delta)^{-2} \le 1 + C_1 \delta , \quad \text{with }
    C_1 = \frac{b(\overline{h})-b(0)} {\overline{h}}= L(2-L\overline{h}) 
    (1- L \overline{h})^{-2},
  \end{align*}
  and inequality \eqref{eq:stab} is verified.
\end{proof}

Proposition~\ref{prop:homeomorph} ensures that the implicit step of
the SSBM method admits a unique solution if $f$ satisfies
Assumption~\ref{as:fg} with one-sided Lipschitz constant $L$. To be more 
precise, for a given $\overline{h} \in (0,L^{-1})$ let us consider an arbitrary
vector of step sizes $h \in (0,\overline{h}]^N$, $N \in \N$. Then, it follows 
from Proposition~\ref{prop:homeomorph} that the nonlinear equations 
\begin{align*}
  \overline{X}_h^{\mathrm{SSBM}}(t_i) &= X_h^{\mathrm{SSBM}}(t_{i-1}) + h_i
  f(t_{i}, \overline{X}_h^{\mathrm{SSBM}}(t_i)), \quad 1 \le i \le N, 
\end{align*}
are uniquely solvable. Further, there exists a homeomorphism
$F_{h_i}(t_i,\cdot) \colon \R^d \to \R^d$ such that
$\overline{X}_h^{\mathrm{SSBM}}(t_i) =
F_{h_i}^{-1}(t_i,X_h^{\mathrm{SSBM}}(t_{i-1}))$. Therefore, the
one-step map $\Psi^{\mathrm{SSBM}} \colon \R^d \times \mathbb{T} \times \Omega
\to \R^d$ of the split-step backward Milstein method is given by
\begin{align}
  \label{eq:PsiSSBM}
  \begin{split}
    \Psi^{\mathrm{SSBM}}(x,t,\delta) &= F_{\delta}^{-1}(t+\delta,x) +
    \sum_{r=1}^m g^r(t + \delta, F_\delta^{-1}(t+\delta, x))
    I_{(r)}^{t,t+\delta}\\
    &\quad + \sum_{r_1, r_2 = 1}^m g^{r_1,r_2}(t+\delta,
    F_{\delta}^{-1}(t+\delta,x) ) I_{(r_2,r_1)}^{t,t + \delta}
  \end{split}
\end{align}
for every $x \in \R^d$ and $(t,\delta) \in \mathbb{T}$, where the stochastic
increments are defined in \eqref{eq2:stochincr1} and \eqref{eq2:stochincr2}.
Next, we verify that $\Psi^{\mathrm{SSBM}}$ satisfies condition
\eqref{eq:Psicond} as well as \eqref{eq3:cond1} and \eqref{eq3:cond2}.

\begin{prop}  
  \label{prop:SSBM}
  Let the functions $f$ and $g^r$, $r = 1,\ldots,m$, satisfy
  Assumption~\ref{as:fg} and condition \eqref{eq2:onesidedSSBM} with $L \in
  (0,\infty)$, $q \in [2,\infty)$, $\eta_1 \in (1,\infty)$, and $\eta_2 \in
  (0,\infty)$. For every $\overline{h} \in (0, \max(L^{-1},\frac{2
  \eta_2}{\eta_1}))$ and initial value $\xi \in L^2(\Omega;\F_{0},\P;\R^d)$ 
  it holds that $(\Psi^{\mathrm{SSBM}}, \overline{h}, \xi)$ is a stochastic
  one-step method.  
  
  In addition, there exists a constant $C_0$ depending on $L$, $q$, $m$,
  and $\overline{h}$, such that 
  \begin{align}
    \label{eq:SSBMzero1}
    \big\| \E \big[ \Psi^{\mathrm{SSBM}}( 0, t,\delta) | \F_{t} \big]
    \big\|_{L^2(\Omega;\R^d)} &\le C_0 \delta,\\
    \label{eq:SSBMzero2}
    \big\| \big( \id - \E [ \, \cdot\, | \F_{t} ] \big)
    \Psi^{\mathrm{SSBM}}( 0, t,\delta) \big\|_{L^2(\Omega;\R^d)} & \le C_0
    \delta^{\frac{1}{2}}    
  \end{align}
  for all $(t,\delta) \in \mathbb{T}$.
\end{prop}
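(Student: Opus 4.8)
The proof proceeds along the two assertions, and the whole argument runs in close parallel to that of Proposition~\ref{prop:PMil}, with the projection $x\mapsto x^\circ$ replaced by the inverse map $x\mapsto F_\delta^{-1}(t+\delta,x)$. The only structural input is Proposition~\ref{prop:homeomorph}: since $\overline{h}<L^{-1}$, for every $\delta\in(0,\overline{h}]$ the map $x\mapsto F_\delta^{-1}(t+\delta,x)$ is a homeomorphism of $\R^d$ which is globally Lipschitz with constant $(1-L\delta)^{-1}\le(1-L\overline{h})^{-1}$ and, by \eqref{eq:Fhinv_growth}, of at most linear growth; the finer estimate \eqref{eq:stab} is not needed here, only for the C-stability proof.

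I would first check the measurability and integrability condition \eqref{eq:Psicond}. Fix $(t,\delta)\in\mathbb{T}$ and $Z\in L^2(\Omega,\F_t,\P;\R^d)$. Continuity of $x\mapsto F_\delta^{-1}(t+\delta,x)$ makes $F_\delta^{-1}(t+\delta,Z)$ an $\F_t/\B(\R^d)$-measurable, in particular $\F_{t+\delta}$-measurable, random variable, and by \eqref{eq:Fhinv_growth} and $\delta\le\overline{h}$ it satisfies $|F_\delta^{-1}(t+\delta,Z)|\le(1-L\overline{h})^{-1}(L\overline{h}+|Z|)$, hence inherits the integrability of $Z$. The smoothness of the coefficient functions and the polynomial growth bounds \eqref{eq3:poly_growth_g}, \eqref{eq3:grr_poly_growth} then show that $g^r(t+\delta,F_\delta^{-1}(t+\delta,Z))$ and $g^{r_1,r_2}(t+\delta,F_\delta^{-1}(t+\delta,Z))$ are $\F_t$-measurable; multiplying by the $\F_{t+\delta}$-measurable increments $I_{(r)}^{t,t+\delta}$, $I_{(r_2,r_1)}^{t,t+\delta}$, which are independent of $\F_t$, one concludes that $\Psi^{\mathrm{SSBM}}(Z,t,\delta)$ is an $\F_{t+\delta}/\B(\R^d)$-measurable square integrable random variable, exactly as in the corresponding step of the proof of Proposition~\ref{prop:PMil}. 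Thus $(\Psi^{\mathrm{SSBM}},\overline{h},\xi)$ is a stochastic one-step method for every $\xi\in L^2(\Omega,\F_0,\P;\R^d)$.

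For \eqref{eq:SSBMzero1} and \eqref{eq:SSBMzero2} the substitution $x=0$ reduces everything to the deterministic vector $c:=F_\delta^{-1}(t+\delta,0)$, which by \eqref{eq:Fhinv_growth} satisfies $|c|\le(1-L\delta)^{-1}L\delta\le(1-L\overline{h})^{-1}L\delta$. Since $I_{(r)}^{t,t+\delta}$ and $I_{(r_2,r_1)}^{t,t+\delta}$ are independent of $\F_t$ and have mean zero, one gets $\E[\Psi^{\mathrm{SSBM}}(0,t,\delta)|\F_t]=c$, which yields \eqref{eq:SSBMzero1}. For \eqref{eq:SSBMzero2},
\[
  \big(\id-\E[\,\cdot\,|\F_t]\big)\Psi^{\mathrm{SSBM}}(0,t,\delta)=\sum_{r=1}^m g^r(t+\delta,c)\,I_{(r)}^{t,t+\delta}+\sum_{r_1,r_2=1}^m g^{r_1,r_2}(t+\delta,c)\,I_{(r_2,r_1)}^{t,t+\delta},
\]
and, using that these iterated It\=o integrals are pairwise uncorrelated with $\E|I_{(r)}^{t,t+\delta}|^2=\delta$ and $\E|I_{(r_1,r_2)}^{t,t+\delta}|^2=\tfrac{\delta^2}{2}$ (the same facts already used in the proof of Proposition~\ref{prop:PMil}), its squared $L^2$-norm equals $\delta\sum_{r}|g^r(t+\delta,c)|^2+\tfrac{\delta^2}{2}\sum_{r_1,r_2}|g^{r_1,r_2}(t+\delta,c)|^2$. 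Inserting $|c|\le(1-L\overline{h})^{-1}L\overline{h}$ into \eqref{eq3:poly_growth_g} and \eqref{eq3:grr_poly_growth} and using $\delta^2\le\overline{h}\delta$, this is $\le C_0^2\delta$ with a constant $C_0$ depending only on $L$, $q$, $m$ and $\overline{h}$, which is \eqref{eq:SSBMzero2}.

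This is a routine verification, essentially a transcription of the argument for Proposition~\ref{prop:PMil}, and I do not expect a serious obstacle. The two points that call for a little care are, first, that the use of Proposition~\ref{prop:homeomorph} forces the step-size restriction $\overline{h}<L^{-1}$, so that $(1-L\delta)^{-1}$ and the Lipschitz constant of $F_\delta^{-1}(t+\delta,\cdot)$ remain bounded; and second, that since $g^r$ and $g^{r_1,r_2}$ grow only polynomially, the integrability claim in \eqref{eq:Psicond} has to be propagated through the at-most-linear growth bound \eqref{eq:Fhinv_growth}, and in \eqref{eq:SSBMzero1}--\eqref{eq:SSBMzero2} one must evaluate at $x=0$ before applying the growth bounds, so that the argument of $g^r$ and $g^{r_1,r_2}$ is the small bounded deterministic vector $c$ rather than a genuine random variable. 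Everything else amounts to bookkeeping of powers of $\delta$ via the (un)correlation structure of the stochastic increments.
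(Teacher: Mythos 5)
Your verification of \eqref{eq:SSBMzero1} and \eqref{eq:SSBMzero2} matches the paper's argument: evaluating at $x=0$ reduces everything to the bounded deterministic vector $F_\delta^{-1}(t+\delta,0)$, and the uncorrelatedness of the increments together with the polynomial growth bounds does the rest. However, there is a genuine gap in your verification of \eqref{eq:Psicond} for general $Z\in L^2(\Omega,\F_t,\P;\R^d)$, and it sits exactly where you declare that the argument runs ``exactly as in the corresponding step of the proof of Proposition~\ref{prop:PMil}.'' In Proposition~\ref{prop:PMil} the projected point $Z^\circ$ lies in $L^\infty$ (it is bounded by $\delta^{-\alpha}$), so the polynomial growth bounds immediately give $g^r(t,Z^\circ),\,g^{r_1,r_2}(t,Z^\circ)\in L^\infty$. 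For the SSBM method the analogue fails: $F_\delta^{-1}(t+\delta,Z)$ is only in $L^2$ by the linear growth bound \eqref{eq:Fhinv_growth}, and since $|g^r(t,x)|\le L(1+|x|)^{\frac{q+1}{2}}$ with $\frac{q+1}{2}\ge\frac{3}{2}$ (and $|g^{r_1,r_2}(t,x)|\le L(1+|x|)^q$ with $q\ge 2$), the compositions $g^r(t+\delta,F_\delta^{-1}(t+\delta,Z))$ and $g^{r_1,r_2}(t+\delta,F_\delta^{-1}(t+\delta,Z))$ need \emph{not} be square integrable for a general $Z\in L^2$. Measurability is fine; integrability is not, and your statement that the finer estimate \eqref{eq:stab} ``is not needed here, only for the C-stability proof'' is precisely where the proposal goes wrong.

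The paper closes this gap by using \eqref{eq:stab}. It computes
\begin{align*}
  &\big\| \Psi^{\mathrm{SSBM}}(Z,t,\delta) -  \Psi^{\mathrm{SSBM}}( 0,
  t,\delta) \big\|^2_{L^2(\Omega;\R^d)}
  = \big\|  F_\delta^{-1}(t+\delta, Z) - F_\delta^{-1}(t+\delta, 0)
  \big\|_{L^2(\Omega;\R^d)}^2 \\
  &\qquad + \delta \sum_{r = 1}^m \big\|
  g^r(t+\delta, F_\delta^{-1}(t+\delta, Z) ) -
  g^r(t+\delta, F_\delta^{-1}(t+\delta, 0) )
  \big\|_{L^2(\Omega;\R^d)}^2\\
  &\qquad + \frac{\delta^2}{2} \sum_{r_1,r_2 = 1}^m \big\|
  g^{r_1,r_2}(t+\delta, F_\delta^{-1}(t+\delta, Z) ) -
  g^{r_1,r_2}(t+\delta, F_\delta^{-1}(t+\delta, 0) )
  \big\|_{L^2(\Omega;\R^d)}^2,
\end{align*}
observes that $\eta_1>1$ and $\frac{\delta^2}{2}\le\frac{\overline{h}}{2}\delta\le\eta_2\delta$, and then applies \eqref{eq:stab} to bound the right-hand side by $(1+C_1\delta)\|Z\|^2_{L^2(\Omega;\R^d)}$, which yields $\Psi^{\mathrm{SSBM}}(Z,t,\delta)\in L^2(\Omega,\F_{t+\delta},\P;\R^d)$. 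This is the only place where the strengthened monotonicity condition \eqref{eq2:onesidedSSBM} (through Proposition~\ref{prop:homeomorph}) enters the proof of Proposition~\ref{prop:SSBM}, and it cannot be dispensed with: without it, the polynomial growth of $g^r$ and $g^{r_1,r_2}$ composed with the merely linearly growing $F_\delta^{-1}$ gives no control of the $L^2$-norm of the one-step map. You should replace your appeal to the PMil argument by this application of \eqref{eq:stab}.
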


\begin{proof}
  Regarding the first assertion we show that $\Psi^{\mathrm{SSBM}}$ 
  satisfies \eqref{eq:Psicond}. For this we fix arbitrary $(t, \delta) \in
  \mathbb{T}$ and $Z \in L^2(\Omega,\F_t,\P;\R^d)$. Then, we obtain from
  Proposition~\ref{prop:homeomorph} that the mapping $F_\delta^{-1}(t+\delta,
  \cdot) \colon \R^d \to \R^d$ is a homeomorphism satisfying the linear growth
  bound \eqref{eq:Fhinv_growth}. Hence, we have 
  \begin{align*}
    F_\delta^{-1}(t+\delta, Z) \in L^2(\Omega,\F_t,\P;\R^d).
  \end{align*}
  Consequently, by the continuity of $g^r$ and $g^{r_1,r_2}$ the mappings
  \begin{align*}
    \Omega \ni \omega \mapsto g^r(t+\delta, F_\delta^{-1}(t+\delta, Z(\omega)))
    \in \R^d
  \end{align*}
  and 
  \begin{align*}
    \Omega \ni \omega \mapsto g^{r_1,r_2}(t+\delta, F_\delta^{-1}(t+\delta,
    Z(\omega))) \in \R^d
  \end{align*}
  are $\F_t / \B(\R^d)$-measurable for every $r,r_1,r_2 = 1,\ldots,m$. Hence,
  $\Psi^{\mathrm{SSBM}}(Z,t,\delta) \colon \Omega \to \R^d$ is an
  $\F_{t+\delta} / \B(\R^d)$-measurable random variable. 
  
  Next, we show that $\Psi^{\mathrm{SSBM}}(Z,t,\delta)$ is square integrable.
  First, it follows from \eqref{eq:Fhinv_growth} that 
  \begin{align*}
    \big\| \E \big[ \Psi^{\mathrm{SSBM}}( Z, t,\delta) | \F_{t} \big]
    \big\|_{L^2(\Omega;\R^d)} &=
    \big\| F_\delta^{-1}(t+\delta, Z) \big\|_{L^2(\Omega;\R^d)}\\
    &\le (1 - L\delta)^{-1} \big( L \delta + \|Z\|_{L^2(\Omega;\R^d)} \big). 
  \end{align*}
  In particular, if $Z = 0 \in L^2(\Omega;\R^d)$ we get 
  \begin{align*}
    \big\| \E \big[ \Psi^{\mathrm{SSBM}}( 0, t,\delta) | \F_{t} \big]
    \big\|_{L^2(\Omega;\R^d)} \le (1 - L\overline{h})^{-1} L \delta,
  \end{align*}
  which is \eqref{eq:SSBMzero1}. Further, since the stochastic increments
  $I_{(r)}^{t,t+\delta}$ and $I_{(r_1,r_2)}^{t,t+\delta}$ are pairwise
  uncorrelated and satisfy $\E[ |I_{(r)}^{t,t+\delta}|^2 ] = \delta$ and
  $\E[ |I_{(r_1,r_2)}^{t,t+\delta}|^2 ] = \frac{1}{2} \delta^2$ we obtain 
  \begin{align*}
    &\big\| \big( \id - \E [ \, \cdot\, | \F_{t} ] \big)
    \Psi^{\mathrm{SSBM}}( 0, t,\delta) \big\|_{L^2(\Omega;\R^d)}^2\\
    &\quad = \Big\| \sum_{r = 1}^m g^r(t+\delta, F_\delta^{-1}(t+\delta, 0) )
    I_{(r)}^{t,t+\delta}\Big\|^2_{L^2(\Omega;\R^d)}\\
    &\qquad + \Big\| \sum_{r_1, r_2 = 1}^m g^{r_1,r_2}(t+\delta, 
    F_{\delta}^{-1}(t+\delta,0) ) I_{(r_2,r_1)}^{t,t + \delta}
    \Big\|^2_{L^2(\Omega;\R^d)}\\ 
    &\quad = \delta \sum_{r = 1}^m \big| g^r(t+\delta, F_\delta^{-1}(t+\delta,
    0) ) \big|^2 + \frac{1}{2} \delta^2 \sum_{r_1,r_2 =
    1}^m \big| g^{r_1,r_2} (t+\delta, F_\delta^{-1}(t+\delta, 0) ) \big|^2.
  \end{align*}
  Then, applications of \eqref{eq3:poly_growth_g}
  and \eqref{eq:Fhinv_growth} yield
  \begin{align*}
    \big| g^r(t+\delta, F_\delta^{-1}(t+\delta,
    0) ) \big| &\le L \big( 1 + \big| F_\delta^{-1}(t+\delta,
    0) \big| \big)^{\frac{q+1}{2}}\\
    &\le L \big( 1 + (1 - L
    \overline{h})^{-1} L \overline{h} \big)^{\frac{q+1}{2}}  
  \end{align*}
  and, similarly, by \eqref{eq3:grr_poly_growth}
  \begin{align*}
    \big| g^{r_1,r_2}(t+\delta, F_\delta^{-1}(t+\delta,
    0) ) \big| &\le L \big( 1 + \big| F_\delta^{-1}(t+\delta,
    0) \big| \big)^q\\
    &\le L \big( 1 + ( 1 - L \overline{h})^{-1}
    L \overline{h} \big)^{q}. 
  \end{align*}
  Therefore, there exists a constant $C_0$ depending on $L$, $q$, $m$, and
  $\overline{h}$, such that \eqref{eq:SSBMzero2} is satisfied. In particular,
  this proves that $\Psi^{\mathrm{SSBM}}( 0,t,\delta) \in
  L^2(\Omega,\F_{t+\delta},\P;\R^d)$.

  Next, for arbitrary $Z \in L^2(\Omega;\F_t,\P;\R^d)$ the same arguments as
  above yield
  \begin{align*}
    &\big\| \Psi^{\mathrm{SSBM}}(Z,t,\delta) -  \Psi^{\mathrm{SSBM}}( 0,
    t,\delta) \big\|^2_{L^2(\Omega;\R^d)}\\
    &\quad = \big\|  F_\delta^{-1}(t+\delta, Z) - F_\delta^{-1}(t+\delta, 0)
    \big\|_{L^2(\Omega;\R^d)}^2 \\
    &\qquad + \delta \sum_{r = 1}^m \big\|
    g^r(t+\delta, F_\delta^{-1}(t+\delta, Z) ) -
    g^r(t+\delta, F_\delta^{-1}(t+\delta, 0) )
    \big\|_{L^2(\Omega;\R^d)}^2\\
    &\qquad + \frac{\delta^2}{2} \sum_{r_1,r_2 = 1}^m \big\|
    g^{r_1,r_2}(t+\delta, F_\delta^{-1}(t+\delta, Z) ) -
    g^{r_1,r_2}(t+\delta, F_\delta^{-1}(t+\delta, 0) )
    \big\|_{L^2(\Omega;\R^d)}^2.
  \end{align*}
  Note that $\eta_1 > 1$ and $\frac{\delta^2}{2} \le \frac{\overline{h}}{2}
  \delta \le \eta_2 \delta$. Thus, the inequality \eqref{eq:stab} is applicable
  and we obtain 
  \begin{align*}
    &\big\| \Psi^{\mathrm{SSBM}}(Z,t,\delta) -  \Psi^{\mathrm{SSBM}}( 0,
    t,\delta) \big\|^2_{L^2(\Omega;\R^d)} \le (1 + C_1 \delta) \| Z
    \|^2_{L^2(\Omega;\R^d)}.    
  \end{align*}
  Hence $\Psi^{\mathrm{SSBM}}(Z,t,\delta) \in
  L^2(\Omega,\F_{t+\delta},\P;\R^d)$. 
\end{proof}

\begin{theorem}
  \label{th:SSBMstab}
  Let the functions $f$ and $g^r$, $r = 1,\ldots,m$, satisfy
  Assumption~\ref{as:fg} and condition \eqref{eq2:onesidedSSBM} with $L \in
  (0,\infty)$, $\eta_1 \in (1,\infty)$, and $\eta_2 \in (0,\infty)$. Further,
  let $\overline{h} \in (0, \max(L^{-1},\frac{2 \eta_2}{\eta_1}))$. Then, for
  every $\xi \in L^2(\Omega,\F_0,\P;\R^d)$ the SSBM scheme
  $(\Psi^{\mathrm{SSBM}},\overline{h},\xi)$ is stochastically C-stable.
\end{theorem}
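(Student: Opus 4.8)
The plan is to verify the defining inequality~\eqref{eq:stab_cond1} of stochastic C-stability directly, choosing the free parameter $\nu = \eta_1 > 1$. First I would fix $(t,\delta) \in \mathbb{T}$ and $Y, Z \in L^2(\Omega,\F_t,\P;\R^d)$, and write $\bar Y := F_\delta^{-1}(t+\delta, Y)$ and $\bar Z := F_\delta^{-1}(t+\delta, Z)$, which are again $\F_t$-measurable by Proposition~\ref{prop:homeomorph}. Inserting the one-step map~\eqref{eq:PsiSSBM} and splitting $\Psi^{\mathrm{SSBM}}(Y,t,\delta) - \Psi^{\mathrm{SSBM}}(Z,t,\delta)$ into its $\F_t$-conditional mean and the orthogonal fluctuation, the deterministic part is $\bar Y - \bar Z$, while the fluctuation is the linear combination of the stochastic increments $I_{(r)}^{t,t+\delta}$ and $I_{(r_2,r_1)}^{t,t+\delta}$ with coefficients $g^r(t+\delta,\bar Y) - g^r(t+\delta,\bar Z)$ and $g^{r_1,r_2}(t+\delta,\bar Y) - g^{r_1,r_2}(t+\delta,\bar Z)$, respectively (this mirrors the computation already carried out in the proof of Theorem~\ref{th:PMilstab}).

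Next I would use that the increments are pairwise uncorrelated, independent of $\F_t$, and satisfy $\E[|I_{(r)}^{t,t+\delta}|^2] = \delta$ and $\E[|I_{(r_1,r_2)}^{t,t+\delta}|^2] = \tfrac12 \delta^2$, so that the left-hand side of~\eqref{eq:stab_cond1} with $\nu = \eta_1$ equals
\begin{align*}
  &\big\| \bar Y - \bar Z \big\|_{L^2(\Omega;\R^d)}^2
  + \eta_1 \delta \sum_{r=1}^m \big\| g^r(t+\delta, \bar Y) - g^r(t+\delta, \bar Z) \big\|_{L^2(\Omega;\R^d)}^2\\
  &\qquad + \frac{\eta_1}{2} \delta^2 \sum_{r_1,r_2=1}^m \big\| g^{r_1,r_2}(t+\delta, \bar Y) - g^{r_1,r_2}(t+\delta, \bar Z) \big\|_{L^2(\Omega;\R^d)}^2.
\end{align*}
The key observation is that $\tfrac{\eta_1}{2}\delta^2 \le \eta_2 \delta$ for $\delta \le \overline{h} \le \tfrac{2\eta_2}{\eta_1}$, so the $g^{r_1,r_2}$-term is dominated by $\eta_2 \delta \sum_{r_1,r_2} \|g^{r_1,r_2}(t+\delta,\bar Y) - g^{r_1,r_2}(t+\delta,\bar Z)\|^2$. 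Then the pathwise inequality~\eqref{eq:stab} from Proposition~\ref{prop:homeomorph}, applied at the time point $t+\delta$ with $x_1 = Y(\omega)$, $x_2 = Z(\omega)$, bounds the sum of these three terms pathwise by $(1 + C_1 \delta)|Y - Z|^2$; taking expectations gives~\eqref{eq:stab_cond1} with $C_{\mathrm{stab}} = C_1$ and $\nu = \eta_1$.

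The only delicate point is the interplay of the two parts of the step-size bound $\overline{h} \in (0, \max(L^{-1}, \tfrac{2\eta_2}{\eta_1}))$: the well-posedness of the implicit step and the estimates~\eqref{eq:Fhinv_lip}--\eqref{eq:stab} require $\delta < L^{-1}$, whereas the absorption $\tfrac{\eta_1}{2}\delta^2 \le \eta_2\delta$ requires $\delta \le \tfrac{2\eta_2}{\eta_1}$. In fact both conditions must hold simultaneously for every $\delta \in (0,\overline{h}]$, so one should read the admissible range as $\overline{h} \in (0, \min(L^{-1}, \tfrac{2\eta_2}{\eta_1}))$ when invoking Proposition~\ref{prop:homeomorph} together with the absorption; in any case, under the stated bound the estimate~\eqref{eq:stab} is available (its constant $C_1$ depends only on $L$ and $\overline{h}$) and the argument closes. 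Beyond that bookkeeping, everything reduces to the uncorrelatedness of the iterated integrals and the single deterministic inequality~\eqref{eq:stab}, so no further technical obstacle is expected. Note that $(\Psi^{\mathrm{SSBM}},\overline{h},\xi)$ is a genuine stochastic one-step method by Proposition~\ref{prop:SSBM}, so the notion of C-stability applies.
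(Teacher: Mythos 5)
Your proposal is correct and follows essentially the same route as the paper's proof: choose $\nu=\eta_1$, use the uncorrelatedness of the increments together with $\E[|I_{(r)}^{t,t+\delta}|^2]=\delta$ and $\E[|I_{(r_1,r_2)}^{t,t+\delta}|^2]=\tfrac12\delta^2$ to reduce the left-hand side of \eqref{eq:stab_cond1} to the three-term sum, absorb the $g^{r_1,r_2}$-term via $\tfrac{\eta_1}{2}\delta\le\eta_2$, and conclude with inequality \eqref{eq:stab}. Your side remark is also well taken: since both $\delta<L^{-1}$ (for Proposition~\ref{prop:homeomorph}) and $\tfrac{\eta_1}{2}\delta\le\eta_2$ must hold simultaneously, the admissible range should indeed read $\overline{h}\in(0,\min(L^{-1},\tfrac{2\eta_2}{\eta_1}))$, so the $\max$ in the statement appears to be a typo.
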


\begin{proof}
  Let $(t,\delta) \in \mathbb{T}$ be arbitrary. For every $Y \in
  L^2(\Omega,\F_t,\P;\R^d)$ we have
  \begin{align*}
    \E \big[ \Psi^{\mathrm{SSBM}}(Y,t,\delta)| \F_t \big] = 
    F_{\delta}^{-1}(t+\delta,Y)
  \end{align*}
  and
  \begin{align*}
    \big( \id - \E[\, \cdot \, | \F_t] \big)
    \Psi^{\mathrm{SSBM}}(Y,t,\delta) 
    &= \sum_{r = 1}^m g^r(t+\delta, F_\delta^{-1}(t+\delta,Y))
    I_{(r)}^{t,t+\delta} \\
    &\quad + \sum_{r_1, r_2 = 1}^m g^{r_1,r_2}(t + 
    \delta, F_{\delta}^{-1}(t+\delta,Y) ) I_{(r_2,r_1)}^{t,t + \delta}.
  \end{align*}
  For the computation of the $L^2$-norm, we make use of the facts that the
  stochastic increments are independent of $\F_t$ and pairwise 
  uncorrelated. Further, since $\E[ |I_{(r)}^{t,t+\delta}|^2 ] = \delta$ and
  $\E[ |I_{(r_1,r_2)}^{t,t+\delta}|^2 ] = \frac{1}{2} \delta^2$ it follows for
  \eqref{eq:stab_cond1} with $\nu = \eta_1$
  \begin{align*}
    &\big\| \E \big[ \Psi^{\mathrm{SSBM}}(Y,t,\delta) -
    \Psi^{\mathrm{SSBM}}(Z,t,\delta)| \F_t \big] 
    \big\|^2_{L^2(\Omega;\R^d)}\\
    &\qquad + \nu \big\| \big( \id - \E[ \cdot |
    \F_t] \big) \big( \Psi^{\mathrm{SSBM}}(Y,t,\delta) -
    \Psi^{\mathrm{SSBM}}(Z,t,\delta) \big) \big\|^2_{L^2(\Omega;\R^d)}\\
    &\quad = \E \big[ \big| F_{\delta}^{-1}(t+\delta,Y) -
    F_{\delta}^{-1}(t+\delta,Z) \big|^2 \big] \\
    &\qquad + \eta_1 \delta \sum_{r = 1}^m \E \big[ \big| g^r(t+\delta,
    F_\delta^{-1}(t+\delta,Y)) - g^r(t+\delta,F_\delta^{-1}(t+\delta,Z))
    \big|^2 \big]\\
    &\qquad + \frac{1}{2} \eta_1 \delta^2 \sum_{r_1,r_2 = 1}^{m} \E \big[ \big| 
    g^{r_1,r_2}(t+\delta, F_\delta^{-1}(t+\delta,Y)) -
    g^{r_1,r_2}(t+\delta,F_\delta^{-1}(t+\delta,Z)) \big|^2 \big].
  \end{align*}
  Due to $\frac{1}{2} \eta_1 \delta \le \frac{1}{2} \eta_1 \overline{h} <
  \eta_2$ an application of inequality \eqref{eq:stab} yields
  \begin{align*}
    &\big\| \E \big[ \Psi^{\mathrm{SSBM}}(Y,t,\delta) -
    \Psi^{\mathrm{SSBM}}(Z,t,\delta)| \F_t \big] 
    \big\|^2_{L^2(\Omega;\R^d)}\\
    &\qquad + \nu \big\| \big( \id - \E[\, \cdot \, |
    \F_t] \big) \big( \Psi^{\mathrm{SSBM}}(Y,t,\delta) -
    \Psi^{\mathrm{SSBM}}(Z,t,\delta) \big) \big\|^2_{L^2(\Omega;\R^d)}\\
    &\quad \le (1 + C_1 \delta) \big\| Y - Z \big\|_{L^2(\Omega;\R^d)}^2,
  \end{align*}
  which is the C-stability condition \eqref{eq:stab_cond1} with $\nu = \eta_1
  \in (1,\infty)$.
\end{proof}

\section{B-consistency of the split-step backward Milstein method}
\label{sec:SSBMcons}

This section is devoted to the proof of the following result, which is
concerned with the B-consistency of the SSBM method.

\begin{theorem}
  \label{th:SSBMcons}
  Let the functions $f$ and $g^r$, $r = 1,\ldots,m$, satisfy
  Assumption~\ref{as:fg} with $L \in (0,\infty)$ and $q \in [2,\infty)$. Let
  $\overline{h} \in (0,L^{-1})$. If the exact solution $X$ to \eqref{sode}
  satisfies $\sup_{\tau \in [0,T]} \| X(\tau) \|_{L^{6q-4}(\Omega;\R^d)} <
  \infty$, then the split-step backward Milstein method
  $(\Psi^{\mathrm{SSBM}},\overline{h},X_0)$ is stochastically B-consistent of
  order $\gamma = 1$.  
\end{theorem}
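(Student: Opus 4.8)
The plan is to verify the two stochastic B-consistency estimates \eqref{eq:cons_cond1} and \eqref{eq:cons_cond2} of Definition~\ref{def:bcons} with $\gamma=1$ directly, reusing the auxiliary estimates of Section~\ref{sec:PMilcons}. Throughout I fix $(t,\delta)\in\mathbb{T}$ and abbreviate $\overline{X}:=F_\delta^{-1}(t+\delta,X(t))$, which by Proposition~\ref{prop:homeomorph} is $\F_t$-measurable and satisfies $\overline{X}=X(t)+\delta f(t+\delta,\overline{X})$. I would first record two elementary a~priori bounds: from \eqref{eq:Fhinv_growth} together with $\delta\le\overline{h}<L^{-1}$ one gets $|\overline{X}|\le C(1+|X(t)|)$, whence $|\overline{X}-X(t)|=\delta\,|f(t+\delta,\overline{X})|\le C\delta(1+|X(t)|)^q$ by \eqref{eq3:poly_growth}. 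Inserting these into the local Lipschitz bounds \eqref{eq3:loc_Lip}, \eqref{eq3:loc_Lip_g} and \eqref{eq3:grr_loc_Lip} shows that exchanging $\overline{X}$ for $X(t)$ inside $f$, $g^r$ or $g^{r_1,r_2}$ at time $t+\delta$ produces an $L^2(\Omega;\R^d)$-error of size $C\delta\,(1+\|X(t)\|_{L^{4q-2}(\Omega;\R^d)}^{2q-1})$, respectively $C\delta\,(1+\|X(t)\|_{L^{3q-1}(\Omega;\R^d)}^{(3q-1)/2})$ for $g^r$. After inserting \eqref{exact} and \eqref{eq:PsiSSBM} and using $X(t)-\overline{X}=-\delta f(t+\delta,\overline{X})$, I would rewrite the local error as
\begin{align*}
  X(t+\delta)-\Psi^{\mathrm{SSBM}}(X(t),t,\delta)
  &=\int_t^{t+\delta}\big(f(s,X(s))-f(t+\delta,\overline{X})\big)\diff{s}\\
  &\quad+\sum_{r=1}^m\Big(\int_t^{t+\delta}g^r(s,X(s))\diff{W^r(s)}-g^r(t+\delta,\overline{X})I_{(r)}^{t,t+\delta}\\
  &\qquad\qquad\qquad-\sum_{r_2=1}^m g^{r,r_2}(t+\delta,\overline{X})I_{(r_2,r)}^{t,t+\delta}\Big),
\end{align*}
where the Milstein correction of the scheme has been merged into the $r$-th diffusion summand on purpose.

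For \eqref{eq:cons_cond1} I would use that $\overline{X}$ is $\F_t$-measurable while the It\=o integral $\int_t^{t+\delta}g^r(s,X(s))\diff{W^r(s)}$, the increment $I_{(r)}^{t,t+\delta}$ and the iterated integral $I_{(r_2,r)}^{t,t+\delta}$ all have vanishing $\F_t$-conditional expectation. Hence $\E[\,\cdot\,|\F_t]$ applied to the display above reduces to $\E\big[\int_t^{t+\delta}(f(s,X(s))-f(t+\delta,\overline{X}))\diff{s}\,\big|\,\F_t\big]$. I would split $f(s,X(s))-f(t+\delta,\overline{X})$ into $f(s,X(s))-f(t+\delta,X(t))$, which after integration over $s$ is bounded by Lemma~\ref{lem:cons1b} applied with $t_1=t$, $s=t_2=t+\delta$, giving $C\delta^2(1+\sup_\tau\|X(\tau)\|_{L^{6q-4}(\Omega;\R^d)}^{3q-2})$, plus the $\F_t$-measurable remainder $f(t+\delta,X(t))-f(t+\delta,\overline{X})$, which by the a~priori bound above contributes $\delta\cdot C\delta(1+\|X(t)\|_{L^{4q-2}(\Omega;\R^d)}^{2q-1})=O(\delta^2)$; here $4q-2\le6q-4$ is used. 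This establishes \eqref{eq:cons_cond1}.

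For \eqref{eq:cons_cond2} I would apply $(\id-\E[\,\cdot\,|\F_t])$ to the same display. The $\F_t$-measurable part $-\delta f(t+\delta,\overline{X})$ drops out while the remaining summands, being already centred, are left unchanged, so it suffices to bound $\big\|(\id-\E[\,\cdot\,|\F_t])\int_t^{t+\delta}f(s,X(s))\diff{s}\big\|_{L^2(\Omega;\R^d)}$ and, for each $r$, the $L^2(\Omega;\R^d)$-norm of $\int_t^{t+\delta}g^r(s,X(s))\diff{W^r(s)}-g^r(t+\delta,\overline{X})I_{(r)}^{t,t+\delta}-\sum_{r_2}g^{r,r_2}(t+\delta,\overline{X})I_{(r_2,r)}^{t,t+\delta}$. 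The first is at most $\int_t^{t+\delta}\|f(s,X(s))-f(t,X(t))\|_{L^2(\Omega;\R^d)}\diff{s}=O(\delta^{3/2})$ by Lemma~\ref{lem:cons1} (with $t_1=s=t$, $t_2=t+\delta$). In the diffusion summand I would replace $\overline{X}$ by $X(t)$ in the $g^r$- and $g^{r,r_2}$-evaluations; since $I_{(r)}^{t,t+\delta}$ and $I_{(r_2,r)}^{t,t+\delta}$ are independent of $\F_t$ with $\|I_{(r)}^{t,t+\delta}\|_{L^2(\Omega;\R)}=\delta^{1/2}$ and $\|I_{(r_2,r)}^{t,t+\delta}\|_{L^2(\Omega;\R)}=\tfrac{1}{\sqrt{2}}\delta$, these substitution errors factorise and, by the a~priori bounds above, are $O(\delta^{3/2})$ for $g^r$ and $O(\delta^2)$ for $g^{r,r_2}$ (using $\delta\le\overline{h}$). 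What remains, $\int_t^{t+\delta}g^r(s,X(s))\diff{W^r(s)}-g^r(t+\delta,X(t))I_{(r)}^{t,t+\delta}-\sum_{r_2}g^{r,r_2}(t+\delta,X(t))I_{(r_2,r)}^{t,t+\delta}$, is exactly the quantity estimated in Lemma~\ref{lem:cons2} (with $t_1=t$, $s=t_2=t+\delta$), which yields $O(\delta^{3/2})$ and is responsible for the moment hypothesis $\sup_\tau\|X(\tau)\|_{L^{6q-4}(\Omega;\R^d)}<\infty$. Summing over $r$ and collecting the estimates gives \eqref{eq:cons_cond2}, completing the verification of stochastic B-consistency of order $1$.

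The part I expect to require most care is the regrouping in the formula for the local error: the split-step structure forces the diffusion coefficients in $\Psi^{\mathrm{SSBM}}$ to be evaluated at $\overline{X}=F_\delta^{-1}(t+\delta,X(t))$ and at time $t+\delta$ rather than at $(t,X(t))$, so one must keep the diffusion increment, the approximation term $g^r(t+\delta,\overline{X})I_{(r)}^{t,t+\delta}$ and the Milstein term $g^{r,r_2}(t+\delta,\overline{X})I_{(r_2,r)}^{t,t+\delta}$ together so that the cancellation inherent in Lemma~\ref{lem:cons2} takes effect — estimated by itself, the Milstein correction is only of order $\delta$. The mismatch between $\overline{X}$ and $X(t)$ is then harmless, because $|\overline{X}-X(t)|\le C\delta(1+|X(t)|)^q$ turns each change of evaluation point into an additional power of $\delta$.
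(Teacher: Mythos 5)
Your proposal is correct and takes essentially the same route as the paper's proof: after your regrouping and the subsequent splits, the drift contributions are exactly the paper's terms $T_1$ and $T_2$ (you bound the implicitness error via $|\overline{X}-X(t)|\le C\delta(1+|X(t)|)^q$ and local Lipschitz continuity of $f$ rather than citing \eqref{eq:2ord}, which is equivalent), and the diffusion contributions are the paper's $T_3$, $T_4$, $T_5$, treated with the same Lemmas~\ref{lem:cons1}, \ref{lem:cons1b}, and \ref{lem:cons2} and yielding the same moment requirements.
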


For the proof we recall some estimates of the homeomorphism
$F_\delta^{-1}$ from \cite[Lemma~4.3]{beyn2015}, which will be useful for the
estimate of the local truncation error. 

\begin{lemma}
  \label{lem:est_homeo}
  Consider the same situation as in Proposition~\ref{prop:homeomorph}. 
  Then there exist constants $C_2$, $C_3$ only depending on $L$, 
  $\overline{h}$ and $q$ such that for every $\delta \in (0,\overline{h}]$ the
  inverse $F_\delta^{-1}(t,\cdot) \colon \R^d \to \R^d$ satisfies the 
  estimates 
  \begin{align}
    \label{eq:1ord}
    \big| F_\delta^{-1}(t,x) - x \big| &\le \delta C_2 \big( 1 + |
    x|^q \big),\\
    \label{eq:2ord}
    \big| F_\delta^{-1}(t,x) - x - \delta f(t,x) \big| &\le \delta^2 C_3 \big(
    1 + |x|^{2q-1} \big) 
  \end{align}
  for every $x \in \R^d$ and $t \in [0,T]$. 
\end{lemma}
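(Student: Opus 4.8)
The plan is to exploit the defining relation for the inverse. Writing $y = F_\delta^{-1}(t,x)$, the identity $F_\delta(t,y) = x$ reads $y - \delta f(t,y) = x$, which rearranges to the key identity
\begin{align*}
  F_\delta^{-1}(t,x) - x = \delta f\big(t, F_\delta^{-1}(t,x)\big).
\end{align*}
This reduces both assertions to growth and Lipschitz bounds on $f$, evaluated at the inverse, combined with the linear growth bound \eqref{eq:Fhinv_growth} for $F_\delta^{-1}$. As a preliminary I would record the consequence of \eqref{eq:Fhinv_growth} that, since $\delta \le \overline{h} < L^{-1}$ forces $(1 - L\delta)^{-1} \le (1 - L\overline{h})^{-1}$ and $L\delta \le L\overline{h} < 1$, there is a constant $C$ depending only on $L$ and $\overline{h}$ with $|F_\delta^{-1}(t,x)| \le C(1 + |x|)$ for all $x \in \R^d$ and $t \in [0,T]$.

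For \eqref{eq:1ord} I would take absolute values in the key identity and apply the polynomial growth bound \eqref{eq3:poly_growth}, obtaining $|F_\delta^{-1}(t,x) - x| = \delta |f(t, F_\delta^{-1}(t,x))| \le \delta L (1 + |F_\delta^{-1}(t,x)|)^q$. Inserting $|F_\delta^{-1}(t,x)| \le C(1+|x|)$ gives $(1 + |F_\delta^{-1}(t,x)|)^q \le (1+C)^q(1+|x|)^q$, and the elementary inequality $(1+|x|)^q \le 2^{q-1}(1 + |x|^q)$ (valid for $q \ge 1$ by convexity of $t \mapsto t^q$) produces \eqref{eq:1ord} with $C_2 = 2^{q-1}L(1+C)^q$.

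For \eqref{eq:2ord} I would subtract $\delta f(t,x)$ in the key identity, so that the defect becomes $F_\delta^{-1}(t,x) - x - \delta f(t,x) = \delta\big[f(t, F_\delta^{-1}(t,x)) - f(t,x)\big]$, and then apply the local Lipschitz estimate \eqref{eq3:loc_Lip}. This bounds the defect by $\delta L \big(1 + |F_\delta^{-1}(t,x)| + |x|\big)^{q-1}\,\big|F_\delta^{-1}(t,x) - x\big|$. Now I would feed in the already-proved estimate \eqref{eq:1ord} for the last factor and the bound $|F_\delta^{-1}(t,x)| \le C(1+|x|)$ for the growth factor, producing an overall power $(1+|x|)^{q-1}(1 + |x|^q)$ multiplied by $\delta^2$. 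Using $1 + |x|^q \le (1+|x|)^q$ (again for $q \ge 1$) collapses this to $(1+|x|)^{2q-1}$, and a final application of $(1+|x|)^{2q-1} \le 2^{2q-2}(1+|x|^{2q-1})$ yields \eqref{eq:2ord}.

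No step presents a genuine obstacle; the only care needed is the bookkeeping of constants, and in particular verifying that they depend solely on $L$, $\overline{h}$, and $q$ as claimed — this is ensured because the bound on $(1-L\delta)^{-1}$ is uniform in $\delta \le \overline{h}$ thanks to $\overline{h} < L^{-1}$. The one point worth isolating is the pair of elementary power inequalities $1 + |x|^p \le (1+|x|)^p \le 2^{p-1}(1+|x|^p)$ for $p \ge 1$, which let the two estimates be stated with the weights $1 + |x|^q$ and $1 + |x|^{2q-1}$ rather than $(1+|x|)^q$ and $(1+|x|)^{2q-1}$.
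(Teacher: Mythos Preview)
Your proof is correct. The paper does not actually prove this lemma but merely cites \cite[Lemma~4.3]{beyn2015}; your argument via the defining identity $F_\delta^{-1}(t,x)-x=\delta f(t,F_\delta^{-1}(t,x))$, combined with the growth bound \eqref{eq3:poly_growth}, the local Lipschitz bound \eqref{eq3:loc_Lip}, and the linear growth \eqref{eq:Fhinv_growth} of the inverse, is precisely the natural route and is almost certainly what appears in the cited reference.
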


\begin{proof}[Proof of Theorem~\ref{th:SSBMcons}]
  The proof follows the same steps as the proof of
  \cite[Theorem~5.7]{beyn2015}. Let us fix arbitrary $(t,\delta) \in
  \mathbb{T}$. Then, by inserting \eqref{exact} and \eqref{eq:PsiSSBM} we
  obtain the following representation of the local truncation error
  \begin{align*}
    &X(t + \delta) - \Psi^{\mathrm{SSBM}} (X(t),t,\delta)
    = X(t) + \delta f(t + \delta,X(t)) - F_{\delta}^{-1}(t+\delta,X(t))\\
    &\qquad + \int_{t}^{t + \delta} \big( f(\tau,X(\tau)) - f(t + \delta,X(t))
    \big) \diff{\tau}\\
    &\qquad + \Big( \sum_{r = 1}^m \int_{t}^{t + \delta} \big(
    g^r(\tau,X(\tau)) - g^r(t + \delta,X(t)) \big) \diff{W^r(\tau)}\\
    &\qquad\qquad - \sum_{r_1,r_2 = 1}^m g^{r_1,r_2}(t+\delta,X(t))
    I_{(r_2,r_1)}^{t,t+\delta} \Big) \\
    &\qquad +\sum_{r = 1}^m \big( g^r(t+ \delta,X(t)) -
    g^r(t+\delta,F_{\delta}^{-1}(t + \delta,X(t))) \big) I_{(r)}^{t,t+\delta}\\
    &\qquad +\sum_{r_1,r_2 = 1}^m \big(g^{r_1,r_2}(t+ \delta,X(t)) -
    g^{r_1,r_2}(t+\delta,F_{\delta}^{-1}(t + \delta,X(t))) \big)
    I_{(r_2,r_1)}^{t,t+\delta}\\
    &\quad =: T_1 + T_2 + T_3 + T_4 + T_5. 
  \end{align*}
  We discuss the five terms separately. It is already shown in the proof of
  \cite[Theorem~5.7]{beyn2015} that by applying \eqref{eq:2ord} the $L^2$-norm
  of the term $T_1$ is dominated by
  \begin{align}
    \label{eq7:T1}
    \begin{split}
      \| T_1 \|_{L^2(\Omega;\R^d)} &= \big\| X(t) + \delta f(t + \delta,X(t)) -
      F_{\delta}^{-1}(t + \delta,X(t)) \big\|_{L^2(\Omega;\R^d)}\\
      &\le C_3  \big\| 1 + | X(t) |^{2q-1} \big\|_{L^2(\Omega;\R)} \delta^{2}
      \\ 
      &\le C_3  \Big(1 + \sup_{\tau \in [0,T]} \| X(\tau)
      \|_{L^{4q - 2}(\Omega;\R^d)}^{2q-1} \Big) \delta^2.
    \end{split}
  \end{align}
  Moreover, if we consider the conditional expectation of the term
  $T_2$ with respect to $\F_t$, then after taking the $L^2$-norm we arrive at
  \begin{align*}
    \big\| \E \big[ T_2 | \F_t \big] \big\|_{L^2(\Omega;\R^d)} = 
    \Big\| \E \Big[ \int_{t}^{t + \delta} \big( f(\tau,X(\tau)) - f(t +
    \delta,X(t)) \big) \diff{\tau} \, \big| \F_t \Big]
    \Big\|_{L^2(\Omega;\R^d)}.
  \end{align*}
  Hence, an application of Lemma~\ref{lem:cons1b} with $t_1 =t$ and
  $s=t+\delta$ yields
  \begin{align}
    \label{eq7:T2}
    \big\| \E \big[ T_2 | \F_t \big] \big\|_{L^2(\Omega;\R^d)} 
    \le C \big( 1 + \sup_{t \in [0,T]} \big\| X(t) \big\|^{3q
    -2}_{L^{6q-4}(\Omega;\R^d)} \big) \delta^2.
  \end{align}
  Since $\E[ T_1 | \F_t ] = T_1$ and $\E[ T_i | \F_t ] = 0$ for $i \in
  \{3,4,5\}$ we get from \eqref{eq7:T1} and \eqref{eq7:T2}  
  \begin{align*}
    &\big\| \E \big[ X(t + \delta) - \Psi^{\mathrm{SSBM}} (X(t),t,\delta) \big|
    \F_t \big] \big\|_{L^2(\Omega;\R^d)}\\
    &\quad \le \| T_1 \|_{L^2(\Omega;\R^d)} 
    + \big\| \E \big[ T_2 | \F_t \big] \big\|_{L^2(\Omega;\R^d)}\\
    &\quad \le C \big( 1 + \sup_{t \in [0,T]} \big\| X(t) \big\|^{3q
    -2}_{L^{6q-4}(\Omega;\R^d)} \big) \delta^2.
  \end{align*}
  This proves \eqref{eq:cons_cond1} with $\gamma = 1$ and it remains to show
  \eqref{eq:cons_cond2}. For this we estimate
  \begin{align*}
    &\big\| \big( \id - \E[ \, \cdot\, | \F_t] \big)
    \big( X(t + \delta) - \Psi^{\mathrm{SSBM}} (X(t),t,\delta) \big)
    \big\|_{L^{2}(\Omega;\R^d)}\\
    &\quad \le \sum_{i = 1}^{5} \big\| \big( \id -
    \E[ \, \cdot\, | \F_t] \big) T_i \big\|_{L^{2}(\Omega;\R^d)}.
  \end{align*}
  Then, note that $\| ( \id - \E[ \, \cdot\, | \F_t] ) T_1
  \|_{L^{2}(\Omega;\R^d)} = 0$ since $T_1$ is $\F_t$-measurable. Further, by
  making use of the fact that $\| ( \id - \E[ \, \cdot\, | \F_t] ) Y
  \|_{L^{2}(\Omega;\R^d)} \le \| Y \|_{L^2(\Omega;\R^d)}$ for all $Y \in
  L^2(\Omega;\R^d)$ we get
  \begin{align*}
    \big\| \big( \id - \E[ \, \cdot\, | \F_t] \big) T_2
    \big\|_{L^{2}(\Omega;\R^d)} \le \| T_2 \|_{L^2(\Omega;\R^d)}.
  \end{align*}
  After inserting $T_2$ it follows from Lemma~\ref{lem:cons1} that
  \begin{align*}
    \| T_2 \|_{L^2(\Omega;\R^d)} &\le \int_{t}^{t+\delta} \big\| f(\tau,X(\tau))
    - f(t+\delta,X(t)) \big\|_{L^2(\Omega;\R^d)} \diff{\tau}\\
    &\le C \big( 1 + \sup_{t \in [0,T]} \big\| X(t)
    \big\|_{L^{4q-2}(\Omega;\R^d)}^{2q-1} \big) \delta^{\frac{3}{2}}.
  \end{align*}
  Regarding the term $T_3$ we first couple the summation indices $r = r_1$.
  Then, the triangle inequality yields
  \begin{align*}
    \| T_3 \|_{L^2(\Omega;\R^d)} &\le \sum_{r = 1}^m \Big\| \int_{t}^{t+\delta} 
    \big( g^{r}(\tau,X(\tau)) - g^r(t+\delta,X(t)) \big) \diff{W^r(\tau)}\\
    &\qquad - \sum_{r_2=1}^m g^{r,r_2}(t+\delta,X(t)) I_{(r_2,r)}^{t,t+\delta}
    \Big\|_{L^2(\Omega;\R^d)}.
  \end{align*}
  Hence, we are in the situation of Lemma~\ref{lem:cons2} with $t_1 = t$ and
  $t_2 = s = t + \delta$ and we obtain
  \begin{align*}
    \| T_3 \|_{L^2(\Omega;\R^d)} &\le C \big( 1 + \sup_{t \in [0,T]} \|
    X(t) \|^{3q-2}_{L^{6q-4}(\Omega;\R^d)} \big) \delta^{\frac{3}{2}}.
  \end{align*}
  The $L^2$-norm estimates of the remaining terms $T_4$ and $T_5$ follow the
  same line of arguments as the last part of the proof of
  \cite[Theorem~5.7]{beyn2015}. For instance, the term $T_5$ is estimated as
  follows: From \eqref{eq3:grr_loc_Lip}, \eqref{eq:Fhinv_growth}, and
  \eqref{eq:1ord} we obtain 
  \begin{align*}
    &\big| g^{r_1,r_2}(t + \delta,X(t)) - g^{r_1,r_2}(t +
    \delta,F_{\delta}^{-1}(t + \delta,X(t))) \big|\\
    &\quad \le L \big( 1 + | X(t) |  + |F_{\delta}^{-1}(t +
    \delta,X(t))| \big)^{q-1}
    \big|X(t) - F_{\delta}^{-1}(t + \delta,X(t)) \big| \\
    &\quad \le C_2 L \big( 1 + | X(t) |  + (1 - L \delta)^{-1} 
    ( L \delta + | X(t) | ) \big)^{q-1} \big( 1 + | X(t) |^{q} \big)
    \delta\\ 
    &\quad \le C \big( 1 + | X(t) |^{2q-1} \big)\delta, 
  \end{align*}
  for a constant $C$ only depending on $C_2$, $L$, $q$, and
  $\overline{h}$. Therefore,
  \begin{align*}
    &\Big\| \sum_{r_1,r_2 = 1}^m \big( g^{r_1,r_2}(t + \delta,X(t)) -
    g^{r_1,r_2}(t + \delta,F_{\delta}^{-1}(t + \delta,X(t))) \big)
    I_{(r_2,r_1)}^{t,t+\delta} \Big\|_{L^2(\Omega;\R^d)}^2\\
    &\quad = \frac{1}{2} \delta^2 \sum_{r_1,r_2 = 1}^m \big\| g^{r_1,r_2} (t +
    \delta,X(t)) - g^{r_1,r_2}(t + \delta,F_{\delta}^{-1}(t + \delta,X(t)))
    \big\|_{L^2(\Omega;\R^d)}^2 \\
    &\quad \le C m^2  \Big( 1 + \sup_{\tau \in [0,T]} \|
    X(\tau) \|^{2q-1}_{L^{4q-2}(\Omega;\R^d)} \Big) \delta^4, 
  \end{align*}
  which is the desired estimate of $T_5$. 
  The corresponding estimate of $T_4$ reads
  \begin{align*}
    \| T_4 \|_{L^2(\Omega;\R^d)} \le C m \Big( 1 + \sup_{\tau \in [0,T]} \|
    X(\tau) \|^{\frac{3}{2}q-\frac{1}{2}}_{L^{3q-1}(\Omega;\R^d)} \Big)
    \delta^{\frac{3}{2}}
  \end{align*}
  and is obtained in the same way as for $T_5$ but with \eqref{eq3:loc_Lip_g}
  in place of \eqref{eq3:grr_loc_Lip}.
  Altogether, this completes the proof of \eqref{eq:cons_cond2} with $\gamma =
  1$.
\end{proof}

\section{Numerical experiments}
\label{sec:exp}
In this section we perform several numerical experiments which 
illustrate the preceding theory for two characteristic examples.

\subsection*{Double well dynamics with multiplicative noise}
 
Consider the following stochastic differential equation
\begin{align}
\label{eq:DWD}
\begin{split}
 \diff X(t)&=X(t)(1-X(t)^2)\diff t+\sigma(1-X(t)^2)\diff W(t), \quad t \in
 [0,T],\\ 
 X(0)&=X_0,
 \end{split}
\end{align}
where $\sigma >0$. The coefficient functions of \eqref{eq:DWD} are given by 
$f(x):=x(1-x^2)$ and $g(x):=\sigma(1-x^2)$, $x\in\R$. Note that $-f$ is the
gradient of the double well potential $V(x) =  \frac{1}{4}
x^4 - \frac{1}{2} x^2$. 

In our experiments we compare
the projected Milstein scheme \eqref{eq:PMildef} and the split-step backward Milstein method
\eqref{eq:SSBMdef}. In additon, we compare with the projected Euler-Maruyama method (PEM) proposed in \cite{beyn2015},  
\begin{align}
\label{PEM}
\begin{split}
\overline X_h(t_i)&=\min(1,h_i^{-\alpha}|X_h(t_{i-1})|^{-1})X_h(t_{i-1}),\\
X_h(t_i)&=\overline X(t_i)+h_if(t_{i-1},\overline X_h(t_i))+\sum\limits_{r=1}
^mg^r(t_{i-1},\overline X_h(t_i))I_{(r)}^{t_{i-1},t_i}.
\end{split}
\end{align}
As before, we have $1\le i\le N$, $h\in(0,1]^N$, $X_h(0):=X_0$ ,
  and $\alpha=\frac{1}{2(q-1)}$.

The equation \eqref{eq:DWD} satisfies Assumption~\ref{as:fg} and 
the coercivity 
condition \eqref{eq:growthcond} with polynomial growth rate $q=3$. 
Table~\ref{tab:conv} contains the restrictions on $\eta$ and $\sigma$ 
which imply
condition \eqref{eq3:onesided} in Assumption~\ref{as:fg}.
Moreover, we summarize the $p$-th moment bounds such that the assumptions of Theorems~\ref{th:PMilconv}, 
\ref{cor:SSBMconv}, and Theorem 6.7 in \cite{beyn2015} are satisfied. 
\begin{table}[h]
  \caption{Convergence conditions for the three methods}
\label{tab:conv} 
\begin{tabular}{p{4cm}|p{2cm}p{2cm}p{2cm}} 
&   PEM  &   PMil   &   SSBM                 \\   
\hline
\vspace{0.05cm}
Moment bounds \\ for exact solution:   & $p=6q-4$  &  $p=8q-6$ 
& $p=6q-4$ \\
$\sup_{t\in[0,T]}\|X(t)\|_{L^p}<\infty$
\vspace{0.2cm}
\\   \hline
\vspace{0.05cm}
Global monotonicity\\  condition \eqref{eq3:onesided}:&  $ \sigma^2<1$  & 
$\sigma^2<1 $   & $ \sigma^2<1$ \\
$2\eta\sigma^2\le 1$ for some 
$\eta>\frac{1}{2}$
\vspace{0.2cm}
\\ \hline 
\\[-0.25cm]
Coercivity condition \eqref{eq:growthcond}: & $p=14$, & $p=18$, & $p=14$,  \\
$\frac{p-1}{2}\sigma^2\le1$
for  $p\in[2,\infty)$ &    $\sigma^2<\frac{2}{13} $ & $\sigma^2<\frac{2}{17}$  
&  $\sigma^2<\frac{2}{13}$
\vspace{0.3cm}
\\   \hline 
\end{tabular}
\end{table}

Note that the additional condition \eqref{eq2:onesidedSSBM} for the SSBM-method is never satisfied for our example, 
since the square of the term $g(x)g'(x)=-2\sigma^2x(1-x^2), 
x\in\R$ is of sixth order and cannot be controlled by the fourth
order $f$-term in condition \eqref{eq2:onesidedSSBM}.

Since there is no explicit expression for the solution of \eqref{eq:DWD} we 
replace the exact solution  by a numerical reference approximation obtained
with an extremely small step size $\Delta t=2^{-17}$.
For this step-size, the projection onto the $(\Delta t)^{-\alpha}$-ball
actually never occurs.
The implicit step in the
SSBM scheme employs Cardano's method in order to solve the 
nonlinear equation exactly. The parameter value was set to $\alpha=\frac{1}{4}$ 
as prescribed 
by the results in Section~\ref{sec:PMilstab} above and in \cite{beyn2015}.
Figure~\ref{fig1} shows the 
strong error of convergence for seven different step sizes 
$h=2^k\Delta t, k=7,\dots,13$. The parameter values are $\sigma=0.3$ and  
$X_0=2$, and the coercivity condition \eqref{eq:growthcond}  always
holds.
 

\begin{figure}[t]
\includegraphics[width=11cm]{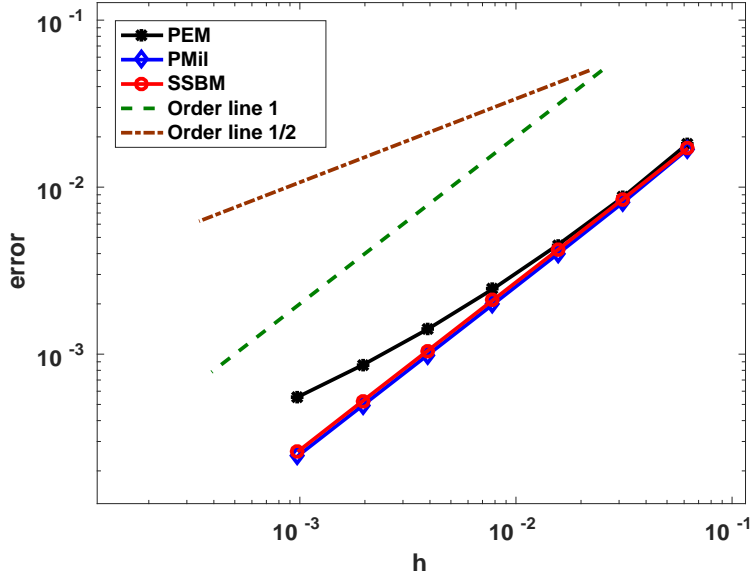}
\caption{Strong convergence errors for the approximation of the double well 
dynamics with parameter $\sigma=0.3$ 
 and $X_0=2$.  } 
\label{fig1}
\end{figure}
The strong error is measured at the endpoint $T=1$ by
\begin{align}
 \label{eq:error}
 \mathrm{error}=(\E[|X_h(T)-X(T)|^2])^\frac{1}{2},
\end{align}
with a  Monte Carlo simulation using $2\cdot10^6$ samples. For this number of 
samples we estimated the associated confidence intervals. They turned out to
be two orders of magnitude smaller than the values of the error itself for all 
methods and parameters shown. In the scale of Figure \ref{fig1} they will be 
hardly visible.

In Figure~\ref{fig1} one observes strong order $\gamma=1$ for the two 
Milstein-type schemes and  strong order $\gamma=\frac{1}{2}$ for the projected  Euler-Maruyama method, at least for smaller step-sizes. 

\begin{table}[h]
  \caption{Strong convergence errors for the approximation of the double well
  dynamics with parameter $\sigma=0.3$ and $X_0=2$.}
  \label{tab:ginz}
  \begin{tabular}{p{0.9cm}p{1.2cm}p{0.9cm}p{1cm}p{1.2cm}p{0.9cm}p{1cm}p{1.2cm}p{0.9cm}}
       &  PEM  &  & & PMil   & & & SSBM  &  
\\   \noalign{\smallskip}\hline\noalign{\smallskip}
  $h$   &   error  &  EOC &  \#  &   error   &  EOC  & \# & error  & EOC
\\   \noalign{\smallskip}\hline\noalign{\smallskip}
      $ 2^{-4}$ &       0.0183  &    &   101912 &       0.0169 &    &   152196 &        0.0171  \\
    $ 2^{-5}$ &       0.0087  &          1.07  &        0 &       0.0081 &          1.07  &        1 &       0.0085  &         1.01\\
    $ 2^{-6}$ &       0.0045  &          0.95  &        0 &       0.0040 &          1.02  &        0 &       0.0042  &         1.01\\
     $2^{-7}$ &       0.0025  &          0.88  &        0 &       0.0020 &          1.01  &        0 &       0.0021  &         1.00\\
     $2^{-8}$ &       0.0014  &          0.80  &        0 &       0.0010 &          1.00  &        0 &       0.0010  &         1.00\\
     $2^{-9}$ &       0.0009  &          0.71  &        0 &       0.0005 &          1.00  &        0 &       0.0005  &         1.00\\
     $2^{-10} $ &       0.0006  &          0.64  &        0 &       0.0002 &          1.01  &        0 &       0.0003  &         1.01
  \end{tabular}
\end{table}

Table~\ref{tab:ginz} contains the values of the computed errors and of the
corresponding experimental order of convergence defined by
\begin{align*}
\mathrm{EOC}=\frac{\log(\mathrm{error}(h_i))-\log(\mathrm{error}(h_{i-1}))}{
\log(h_i)-\log(h_{i-1})},
\end{align*}
which support the theoretical results.
Moreover, as in \cite{beyn2015} we are interested in the number of samples 
for which the trajectories of the PEM method and the PMil scheme leave the 
sphere of radius $h^{-\alpha}$, i.e. the total number of trajectories we
observed the following events
\begin{align}
\label{ineq:projec}
&\{i=1,\dots,N\colon|X^{\mathrm{PEM}}_h(t_i)|>h^{-\alpha}\} \neq \emptyset, \\
&\{i=1,\dots,N\colon|X^{\mathrm{PMil}}_h(t_i)|>h^{-\alpha}\} \neq \emptyset,
\end{align}
respectively. This information is provided in the fourth and 
seventh columns of Table~\ref{tab:ginz}.

\begin{figure}[t]
\includegraphics[width=11cm]{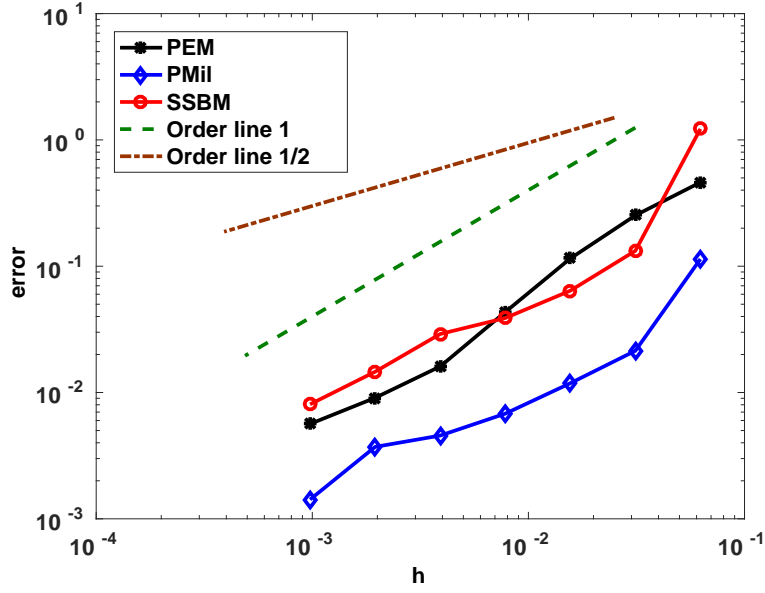}
\caption{Strong convergence errors for the approximation of the double well 
dynamics. Parameter values $\sigma=1$ 
 and $X_0=2$.  } 
\label{fig2}
\end{figure}

\begin{table}[H]
  \caption{Strong convergence errors for the approximation of the double well
  dynamics. Parameter values $\sigma=0.3$ and $X_0=2$.}
  \label{tab:ginz1}
  \begin{tabular}{p{0.9cm}p{1.2cm}p{0.9cm}p{1cm}p{1.2cm}p{0.9cm}p{1cm}p{1.2cm}p{0.9cm}}
       &  PEM  &  & & PMil &  & &  SSBM  &  
\\   \noalign{\smallskip}\hline\noalign{\smallskip}
  $h$   &   error  &  EOC &  \#  &   error   &  EOC  & \# & error  & EOC
\\   \noalign{\smallskip}\hline\noalign{\smallskip}
       $2^{-4}$ &    0.4611  &    &   233635 &  0.1139 &  &  331721 & 1.2222\\ 
     $2^{-5}$ &       0.2525  &          0.87  &      877 &       0.0214 &     
2.41  &      516 &       0.1324  &         3.21\\
     $2^{-6}$ &       0.1151  &          1.13  &      212 &       0.0118 &      
    0.85  &      168 &       0.0640  &         1.05\\
     $2^{-7}$ &       0.0433  &          1.41  &       58 &       0.0068 &      
    0.80  &       63 &       0.0389  &         0.72\\
     $2^{-8}$ &       0.0161  &          1.43  &       21 &       0.0046 &      
    0.58  &       24 &       0.0291  &         0.42\\
     $2^{-9}$ &       0.0091  &          0.83  &       11 &       0.0037 &      
    0.30  &        9 &       0.0146  &         0.99\\
     $2^{-10}$ &       0.0056  &          0.69  &        1 &       0.0014 &     
     1.38  &        2 &       0.0081  &         0.86
  \end{tabular}
\end{table}

\begin{figure}[p!]
    \subfigure[The case $\sigma=0.3$.]
{\includegraphics[width=0.80\textwidth]{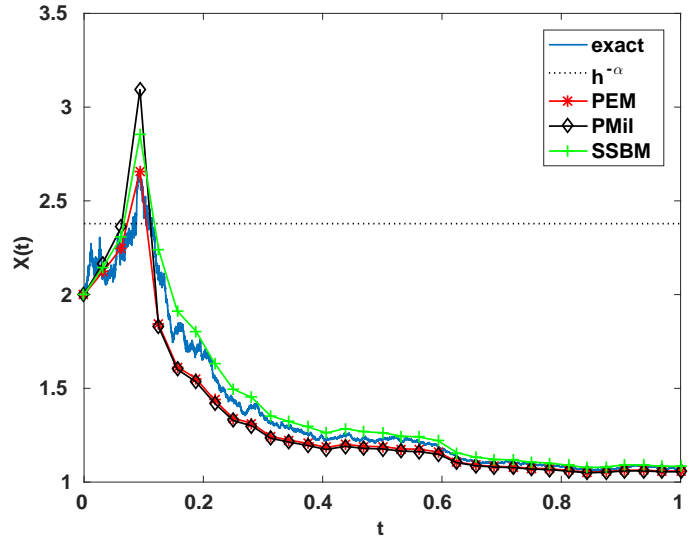}}
    \subfigure[The case $\sigma=1$.]
{\includegraphics[width=0.80\textwidth]{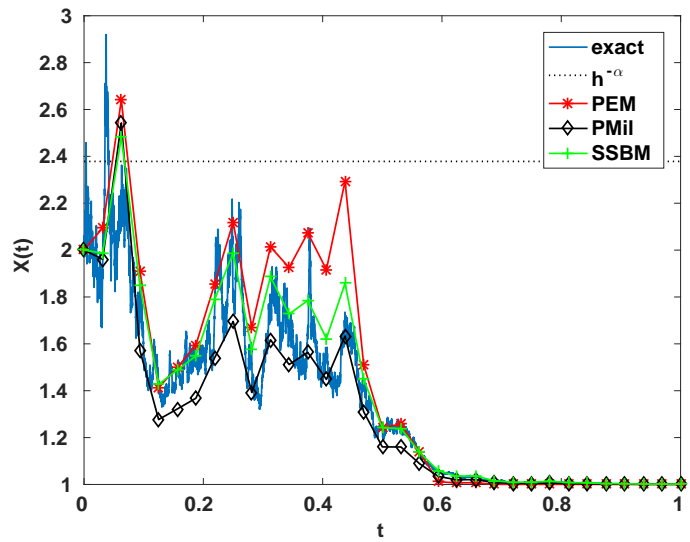}}
\caption{Single trajectories for the methods PEM, PMil, and SSBM with step size
$h= 2^{-5}$ versus reference solution (exact) with parameter values 
$\alpha=\frac{1}{4}$, $X_0=2,$ and $T=1$.}
\label{fig2a}
\end{figure}
\clearpage

Figure~\ref{fig2} and Table~\ref{tab:ginz1} show the results of the strong 
error of convergence when conditions \eqref{eq3:onesided} and  
\eqref{eq:growthcond} in Assumption~\ref{as:fg} are violated by choosing 
$\sigma=1$. The 
estimate of the errors are based on the Monte Carlo simulation with the same 
number $2 \cdot10^6$  of samples as above. And as in the first experiment, 
confidence intervals are two orders of magnitude smaller than the values 
themselves. Therefore, we believe that the slightly irregular behavior of 
the convergence errors is not due to a too small  number of samples. 
Rather we suspect that  violation of the 
convergence conditions influences the expected order of convergence, see 
the numerical EOC values in Table~\ref{tab:ginz1}. This effect certainly 
deserves further investigation. For an illustration we include two runs
for parameter values $\sigma=0.3$ and $\sigma=1$ in Figure~\ref{fig2a}.

\begin{figure}[h]
\includegraphics[width=11cm]{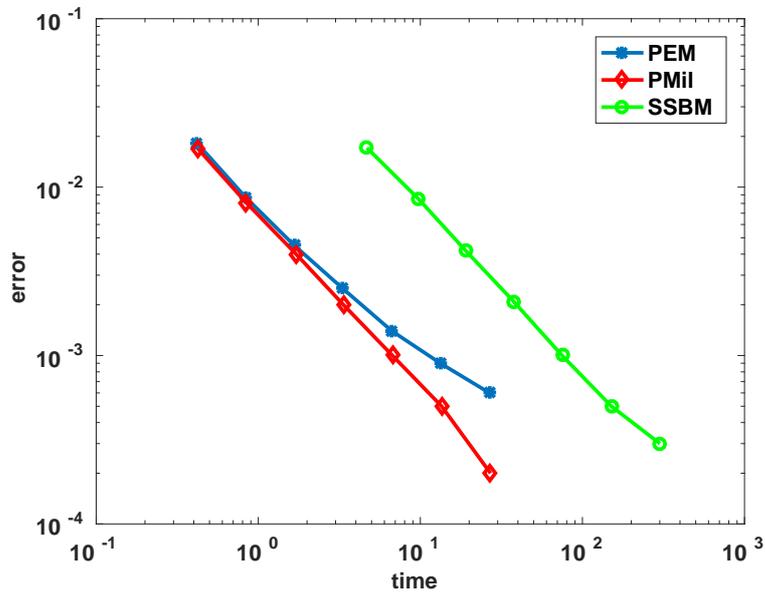}
\caption{CPU times versus  $L^2$-errors of the PEM, PMil, and SSBM methods for 
the double well dynamics.} 
\label{fig3}
\end{figure}

For a fair comparison of computational costs we compiled Figure \ref{fig3} 
which shows the $L^2$-error versus computing times (measured by tic,toc in 
MATLAB). One clearly observes that
PMil and PEM outperform the split-step backward Milstein method SSBM.
Moreover, PMil has a slight advantage over PEM when high accuracy is required.

\subsection*{A stochastic oscillator with commutative noise}
%
%

\begin{figure}[p!]
    \subfigure[Single trajectories of exact solution and PMil scheme.]
{\includegraphics[width=0.80\textwidth]{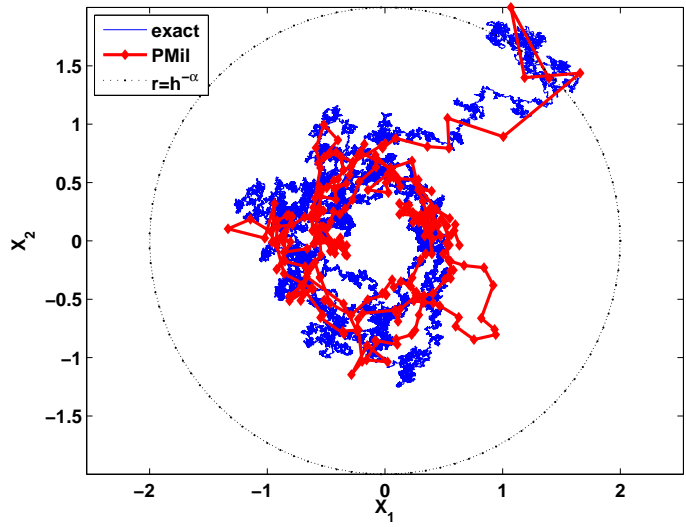}}
    \subfigure[Zoom in on the first few steps of the trajectory from (a).]
{\psfrag{ax}{$X_h(t_0)$}
\psfrag{xb}{$X_h(t_1)$}
\psfrag{cx}{$\overline{X}_h(t_2)$}
\psfrag{dx}{$X_h(t_2)$}
\psfrag{ex}{$X_h(t_3)$}
\psfrag{fx}{$\overline{X}_h(t_4)$}
\psfrag{gx}{$X_h(t_4)$}
\includegraphics[width=0.80\textwidth]{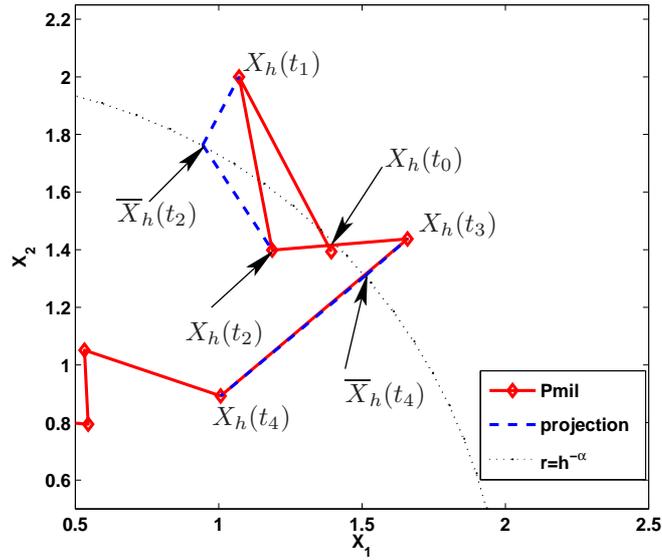}}
\caption{Single trajectory of PMil scheme with step size $h=2^{-4}$ for the 
stochastic oscillator dynamics. Two projected intermediate steps  are 
indicated by dashed lines. Parameter values 
$\mu=0.4$, $\sigma_1=0.5$, $\sigma_2=0.6$, $\theta=1$, $r_0=1.97$ and 
$\varphi_0=\frac{\pi}{4}$. }
\label{fig4}
\end{figure}

Next, we consider a system in polar coordinates with diagonal noise of the 
following
form 
\begin{equation}
  \label{eq:polcor}
  \begin{pmatrix}
    \diff r(t) \\
    \diff \varphi(t) 
  \end{pmatrix}
  = 
  \begin{pmatrix} 
    r f_1(r^2) \\ 
    f_2(\varphi)
  \end{pmatrix} 
  \diff t +
  \begin{pmatrix} r g_1(r^2) \diff W^1(t) \\
    g_2(\varphi) \diff W^2(t)
  \end{pmatrix},
\end{equation}
where $f_1,f_2$ are smooth functions on $[0,\infty)$ and
$g_1,g_2$ are smooth $2 \pi$-periodic functions on $\R$.
We transform to Euclidean coordinates by applying It\=o's formula to
$x(r,\varphi)=(r \cos(\varphi),r \sin(\varphi))$, 
\begin{equation}
  \label{eq:cartgeneral}
  \begin{aligned}
    \diff x(t) &= \left[ 
    \begin{pmatrix} 
      x_1 & - x_2 \\
      x_2 & x_1 
    \end{pmatrix}
    \begin{pmatrix}
      f_1(r^2) \\ 
      f_2(\varphi) 
    \end{pmatrix}
    -\frac{1}{2} (g_2(\varphi))^2 
    \begin{pmatrix} x_1 \\
      x_2
    \end{pmatrix}
    \right] \diff t\\
    &\quad + g_1(r^2) 
    \begin{pmatrix}
      x_1 \\
      x_2 
    \end{pmatrix} 
    \diff W^1(t)
    + g_2(\varphi) 
    \begin{pmatrix} 
      - x_2 \\ 
      x_1 
    \end{pmatrix}
    \diff W^2(t).
\end{aligned}
\end{equation}
Here we set $r^2 = x_1^2 + x_2^2$ and replace $g_j(\varphi),j=1,2$
by $g_j(\mathrm{arg}(x_1 +ix_2))$ with the argument
$\mathrm{arg}(r e^{i \varphi})=\varphi$ taken from $(-\pi,\pi]$, for example.
 In the following computation we treat the special case
\begin{equation} 
  \label{eq:setparamHopf}
  f_1(r^2)=\mu - r^2, \quad f_2(\varphi) = \theta, \quad
  g_1(r^2)=\sigma_1,  \quad g_2(\varphi) = \sigma_2
\end{equation}
with parameters $\mu,\theta,\sigma_1,\sigma_2 \in \R$ still to be chosen. This
is a generalization of a system studied in \cite{gorini2015}. 
When the parameter $\mu$ varies, this may be considered as a model problem for
stochastic Hopf bifurcation (cf.  \cite[Chap.~9.4.2]{arnold1998}).

In this case the system \eqref{eq:cartgeneral} with
initial condition $x(0) =(r_0\cos(\varphi_0),r_0 \sin(\varphi_0))$
can be solved explicitly via \eqref{eq:polcor}, since the radial
equation is a stochastic Ginzburg Landau equation while
the angular equation can be directly integrated. We obtain from
\cite[Chap.~4.4]{kloeden1999} 
\begin{equation}
  \label{eq:exactHopf}
  \begin{aligned}
    r(t) &= r_0 \exp\big( (\mu-\frac{\sigma_1^2}{2})t+ \sigma_1 W^1(t)\big)\\
    &\qquad \times 
    \Big(1+2 r_0^2\int_0^t\exp\big((2\mu-\sigma_1^2)s+ 2 \sigma_1
    W^1(s) \big) \diff s \Big)^{-1/2}, \\ 
    \varphi(t) &= \varphi_0 + \theta t + \sigma_2 W^2(t).
  \end{aligned}
\end{equation}
Since $g^1,g^2$ are linear and $f$ is cubic with a uniform upper Lipschitz
bound, we find that Assumption \ref{as:fg} is satisfied with $q=3$.
Moreover, the system has commutative noise \cite[Chap.~10.3]{kloeden1999},
since 
\begin{equation*}
  g^{1,2}(x)=g^{2,1}(x)= \sigma_1 \sigma_2 
  \begin{pmatrix}
    -x_2 \\ 
    x_1 
  \end{pmatrix}.
\end{equation*}
As in \cite[Chap.10 (3.16)]{kloeden1999} the double sum in \eqref{eq:PMildef},
\eqref{eq:SSBMdef} then takes the explicit form
\begin{equation*}
  \frac{1}{2}\left[\sigma_1^2( ( I_{(1)}^{t_{i-1},t_i})^2 - h_i) -
  \sigma_2^2( ( I_{(2)}^{t_{i-1},t_i} )^2 - h_i) \right] Y 
  + \sigma_1 \sigma_2 I_{(1)}^{t_{i-1},t_i} I_{(2)}^{t_{i-1},t_i}
  \begin{pmatrix}
    -Y_2 \\
    Y_1 
  \end{pmatrix},
\end{equation*}
where $Y=\overline{X}_h^{\mathrm{PMil}}(t_i)$ and 
$Y=\overline{X}_h^{\mathrm{SSBM}}(t_i)$, respectively.

Figure~\ref{fig4} (a) shows the simulation of a single path generated by the exact 
solution and the projected Milstein method with equidistant step size 
$h=2^{-4}$ and  parameters $\alpha=\frac{1}{4}, \mu=0.4, \sigma_1=0.5, 
\sigma_2=0.6, \theta=1$, $r_0=1.97$, and $\varphi_0=\frac{\pi}{4}$. The initial 
value is $X_0=(1.39,1.39)$. Further, we use a highly accurate approximation of the integral in 
\eqref{eq:exactHopf} by a Riemann sum with step size $\Delta 
t=2^{-18}$.

As already mentioned in  \cite{beyn2015} we are interested in trajectories of 
the PMil scheme which do not coincide with trajectories generated by  the 
standard Milstein method. Such a case is shown in Figure~\ref{fig4} (a) where
the exact trajectory and the PMil-trajectory are displayed.
Figure~\ref{fig4} (b) shows a close-up of the 
projected Milstein scheme near the circle of radius $h^{-\alpha}=2$.
In the first and in the third step the trajectory leaves the ball,
creating in the next step the intermediate values 
$\overline{X}_h^{\mathrm{PMil}}(t_2)$ and $\overline{X}_h^{\mathrm{PMil}}(t_4)$, which have been
connected by dashed lines to their predecessor and their successor.
 Obviously, this event occurs more often when the 
starting point is close to the  circle  and  the values of $\sigma_1$ 
and $\sigma_2$ are large.

\begin{figure}[H]
\includegraphics[width=0.80\textwidth]{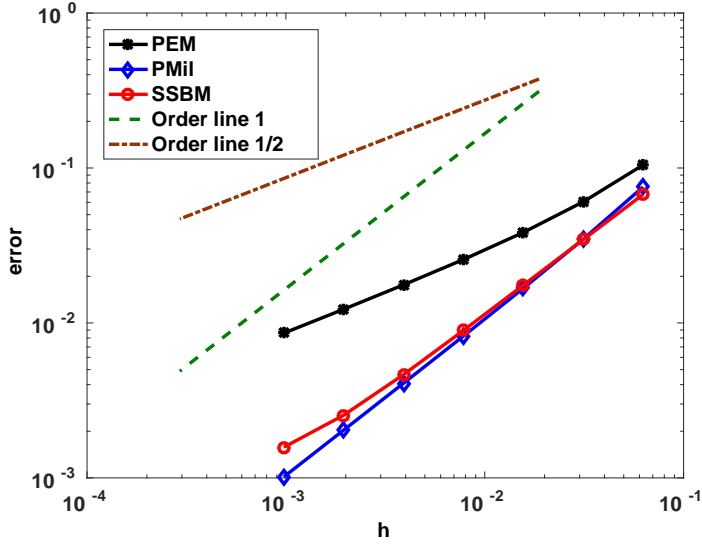}
\caption{Strong convergence errors for the approximation of the stochastic 
oscillator. Parameter values $\mu=0.4$, $\sigma_1=0.5$, $\sigma_2=0.6$, 
$\theta=1$, $r_0=1.97$, and $\varphi_0=\frac{\pi}{4}$.} 
\label{fig5}
\end{figure}

\begin{table}[b]
  \caption{Strong convergence errors for the approximation of the stochastic 
oscillator. Parameter values $\mu=0.4$, $\sigma_1=0.5$, $\sigma_2=0.6$, 
$\theta=1$, $r_0=1.97$, and $\varphi_0=\frac{\pi}{4}$.}
\label{tab:hopf} 
\begin{tabular}{p{0.9cm}p{1.2cm}p{0.9cm}p{1cm}p{1.2cm}p{0.9cm}p{1cm}p{1.2cm}p{ 
0.9cm}}   &  PEM  &  &  & PMil   &  &  &  SSBM  &                  
\\   \noalign{\smallskip}\hline\noalign{\smallskip}
  $h$   & error  &  EOC &  \#  & error &  EOC  & \# & error  & EOC
\\   \noalign{\smallskip}\hline\noalign{\smallskip}
$2^{-4}$ &   0.1045  &    &     3082 &   0.07540 &    &     4279 & 0.06741  \\ 
$2^{-5}$ & 0.06045 & 0.79  & 1 &  0.03468 & 1.12  &  0 &  0.03445  & 0.97\\
$2^{-6}$ & 0.03838  & 0.66  & 0 & 0.01673 & 1.05  &  0 & 0.01753 &  0.97\\
$2^{-7}$ & 0.02566  & 0.58  &  0 & 0.00823 & 1.02  & 0 & 0.00894 & 0.97\\
$2^{-8}$ & 0.01762  & 0.54  &  0 & 0.00408 & 1.01  &  0 & 0.00464  &  0.95\\
$2^{-9}$ & 0.01226  & 0.52  &  0 & 0.00204 & 1.01  & 0 & 0.00254  &   0.87\\
$2^{-10}$ & 0.00860  & 0.51  & 0 & 0.00102 & 1.00  &  0 & 0.00158 &  0.69
\end{tabular}
\end{table}

Figure~\ref{fig5} and Table~\ref{tab:hopf} show the estimated  strong error of 
convergence for the PEM scheme, the PMil method, and the SSBM scheme.
Nonlinear equations in the scheme SSBM are solved by  Newton's method with 
three iteration steps. The parameters and the initial value are as in
Figure~\ref{fig4}. 

The estimates of errors, given by \eqref{eq:error} at the endpoint $T=1$, 
with seven different step sizes $h=2^k\Delta t, k=8,\dots, 14$ are 
again based on Monte Carlo simulations with $2\cdot10^6$ samples. As above the
associated confidence intervals are two orders of magnitude smaller than the
estimated errors.

The numerical results in  Figure~\ref{fig5} and 
in Table~\ref{tab:hopf} confirm the theoretical
 orders of convergence, though with some loss towards smaller step-sizes for
 the SSBM-method.
As in our first example we provide a diagram of error versus computing time in 
Figure \ref{fig6}.
This time PEM has a slight advantage for very rough accuracy, but is worse
than PMil and SSBM for higher accuracy. PMil always wins against SSBM.
\begin{figure}[H]
\includegraphics[width=11cm]{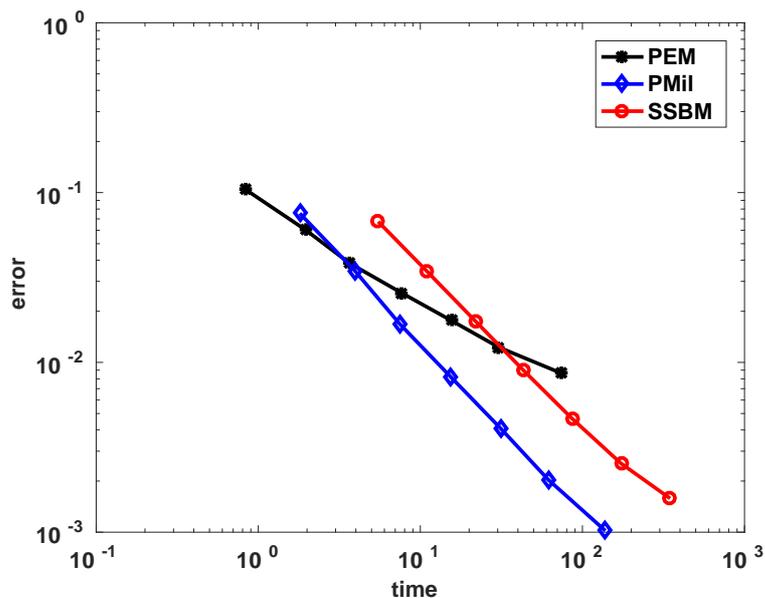}
\caption{CPU times versus  $L^2$-errors of the PEM, PMil, and SSBM methods for 
the stochastic oscillator.} 
\label{fig6}
\end{figure}

\subsection*{Acknowledgement}

The authors would like to thank Martin Steinborn for bringing several typos to
our attention. In addition, the first two authors are grateful for financial
support by the DFG-funded CRC 701 'Spectral Structures and Topological Methods
in Mathematics'. Further, this research was carried out by the third
named author in the framework of {\sc Matheon}, project A25, supported by
Einstein Foundation Berlin.


\def\cprime{$'$} \def\polhk#1{\setbox0=\hbox{#1}{\ooalign{\hidewidth
  \lower1.5ex\hbox{`}\hidewidth\crcr\unhbox0}}}

\end{document}